\newtheorem{theorem}{Theorem}
\newtheorem{proposition}{Proposition}[section]
\newtheorem{lemma}[proposition]{Lemma}
\newtheorem{corollary}[proposition]{Corollary}
\newtheorem{theoremb}[proposition]{Theorem}
\theoremstyle{definition}
\newtheorem{definition}[proposition]{Definition}
\newtheorem{example}[proposition]{Example}
\newtheorem{examples}[proposition]{Examples}
\newtheorem{remark}[proposition]{Remark}
\newcommand{\bl}{\begin{lemma}}
\newcommand{\bp}{\begin{proposition}}
\newcommand{\bt}{\begin{theorem}}
\newcommand{\bc}{\begin{corollaire}}
\newcommand{\be}{\begin{equation}}
\newcommand{\bee}{\begin{equation*}}
\newcommand{\bd}{\begin{definition}}
\newcommand{\bdp}{\begin{definitionproposition}}
\newcommand{\bex}{\begin{example}}
\newcommand{\br}{\begin{remarque}}
\newcommand{\bpr}{\begin{proof}}
\newcommand{\el}{\end{lemma}}
\newcommand{\ep}{\end{proposition}}
\newcommand{\et}{\end{theorem}}
\newcommand{\ec}{\end{corollaire}}
\newcommand{\ee}{\end{equation}}
\newcommand{\eee}{\end{equation*}}
\newcommand{\ed}{\end{definition}}
\newcommand{\edp}{\end{definitionproposition}}
\newcommand{\eex}{\end{example}}
\newcommand{\er}{\end{remarque}}
\newcommand{\epr}{\end{proof}}
\newcommand{\secref}[1]{Section~\ref{#1}}
\newcommand{\thmref}[1]{Theorem~\ref{#1}}
\newcommand{\propref}[1]{Proposition~\ref{#1}}
\newcommand{\lemref}[1]{Lemma~\ref{#1}}
\newcommand{\corref}[1]{Corollary~\ref{#1}}
\newcommand{\remref}[1]{Remark~\ref{#1}}
\newcommand{\exemref}[1]{Example~\ref{#1}}
\newcommand{\exemsref}[1]{Examples~\ref{#1}}
\newcommand{\defref}[1]{Definition~\ref{#1}}
\def\ov{\overline}
\def\menos{\setminus}
\renewcommand{\int}[1]{{\rm int} (#1)}
\def\tc{{\mathtt c}}
\def\tv{{\mathtt v}}
\def\tu{{\mathtt u}}
\def\T{\mathcal G}
\def\T{\mathcal T}
\def\I{\mathcal I}
\def\SS{\mathcal S}
\def\TT{\mathcal T}
\def\N {\mathbb N}
\def\R {\mathbb R}
\def\Z {\mathbb Z}
\renewcommand\S {\mathbb S}
\renewcommand\L {\mathcal L}
\def\cI{{\mathcal I}}
\def\cL{{\mathcal L}}
\def\cR{{\mathcal R}}
\def\cS{{\mathcal S}}
\def\cT{{\mathcal T}}
\def\cV{{\mathcal V}}
\def\cX{{\mathcal X}}
\def\Id{{\rm id\,}}
\def\codim{{\rm codim}}
\def\pr{{\rm pr}}
\def\depth{{\rm depth \,}}
\def\rc{{\mathring{\tc}}}
\def\Sing{{\rm Sing}}
\def\crG{{\mathscr G}}
\def\gE{{\mathfrak E}}
\def\gU{{\mathfrak U}}
\title{Intersection homotopy, refinements and coarsenings}
\date{\today}
\author{Martintxo Saralegi-Aranguren}
\address{Laboratoire de Math{\'e}matiques de Lens\\  
      EA 2462 \\
      Universit\'e d'Artois\\
         SP18, rue Jean Souvraz\\
          62307 Lens Cedex\\
         France}
\email{martin.saraleguiaranguren@univ-artois.fr}
\author{Daniel Tanr\'e}
\address{D\'epartement de Math{\'e}matiques\\
         UMR-CNRS 8524 \\
         Universit\'e de Lille\\
         59655 Villeneuve d'Ascq Cedex\\
         France}
\email{Daniel.Tanre@univ-lille.fr}
\thanks{The second author was partially supported by 
the ANR-11-LABX-0007-01  ``CEMPI''}
\subjclass[2020]{55Q70, 14F43, 55N33, 32S70}
\keywords{Intersection homotopy groups; Coarsening; Refinement; Topological invariance}
\begin{document}

\begin{abstract} 
In previous works, we studied the intersection homotopy groups 
associated to a Goresky and MacPherson perversity $\ov p$
and a filtered space $X$.
They are defined as the homotopy groups of simplicial sets  introduced by P. Gajer. 
We particularized to locally conical  spaces  of Siebenmann (called CS sets) and established a topological invariance 
for them when the regular part remains unchanged.

Here, we consider coarsenings, made of two structures of CS sets on the same topological space, 
the strata of one being a union of strata of the other.
We endow them with a general perversity and its pushforward, where  the adjective ``general'' 
means that the perversities are defined on the poset of the strata and not only according to their codimension.
If the perversity verifies a growing property analogous to that of the original perversities of Goresky and MacPherson, 
we also find an invariance theorem for the intersection homotopy groups of a coarsening, 
under the above restriction on the regular parts. 
An invariance is shown too in some cases where  singular strata become regular in the coarsening, for
Thom-Mather spaces.
\end{abstract}

\maketitle

\tableofcontents

\newpage
This work is concerned with  intersection homotopy groups of filtered spaces, $(X,\cS)$,
endowed with a perversity $\ov{p}$. They are
defined as the homotopy groups of  the simplicial set
$\crG_{\ov p}(X,\cS)$ introduced by Gajer (\cite{Gajer1,Gajer2}); i.e.,
$\pi_{\star}^{\ov p}(X,\cS,x_{0})=\pi_{\star}(\crG_{\ov p}(X,\cS),x_{0})
$.
For instance, $(X,\cS)$ is said $\ov p$-connected if $\crG_{\ov p}(X,\cS)$ is connected.

\medskip
In previous works (\cite{CST10,CST8,CST9}),  
we studied the relations between these  groups
and the intersection homology groups of Goresky and MacPherson (\cite{GM1}),
establishing a Hurewicz theorem and a Van Kampen theorem.
We refer the interested reader  to \cite{CST9} for more details.
Here, we focus on the topological invariance of the $\ov{p}$-intersection homotopy groups
defined from  a perversity $\ov{p}$ and a  topological space endowed with different filtrations.

\medskip
 Let's start by clarifying the situation. 
 In \cite{GM1,GM2}, topological invariance  means the existence of isomorphisms between the intersection
homology groups of a given pseudomanifold for any 
 fixed  GM-perversity (see \defref{def:GMperversity})
independently of the chosen filtration.
In  \cite{CST9}, we were looking for conditions that guarantee the invariance of intersection homotopy groups
within the framework of  GM-perversities
and locally conical filtered spaces with a structure of manifold on each stratum. 
These spaces are the CS sets of Siebenmann (\cite{MR0319207}) recalled in \defref{def:csset}.
From King's work (\cite{MR800845}) on the topological invariance of  intersection homology without sheaf,
 there is a  preferred topological pattern. King's sketch of proof is as follows:
 given a CS set $(X,\cS)$, he constructs an explicit intrinsic CS set $(X,\cX^\star)$ which is
 the coarsest  filtration on $X$ as a CS set.
  The proof of topological invariance reduces to the existence of an isomorphism
between the $\ov{p}$-intersection homology groups of $(X,\cS)$ and of $(X,\cX^\star)$.
In  \cite[Theorem A]{CST9}, 
we adapted this idea to the invariance of $\ov{p}$-intersection homotopy groups. The example
of a double suspension on a Poincar\'e sphere shows that such invariance cannot be  true in full generality.
We established it  for the CS sets  $(X,\cS)$ having the same regular part than $(X,\cX^\star)$.

 \medskip
 In the present work, we come back to the topological invariance of intersection homotopy groups
 and establish it with weaker requirements.
 First, we consider more general perversities, $\ov{p}\colon \cS\to \ov\Z=\Z\cup\{-\infty,\infty\}$,
 introduced by MacPherson (\cite{RobertSF}). Defined on the family $\cS$ of strata of a filtered
 space $X$, they take values in $\ov\Z$.
 (We already  addressed this situation in the context of intersection homology in \cite{CST3}.)
 Since these perversities are defined on the set of strata, they depend on the chosen filtration and we could conclude 
 that there is no hope of topological invariance with this generality. 
 In fact, we must not forget that, for a topological invariance, only one GM-perversity appears. 
 Here we  have two filtrations, therefore a priori two perversities, and we have to
 impose a link between them,  that will bring the invariance.  Let's explain this.
 
\medskip
We handle with triples $(X,\cS,\cT)$ formed of a topological space $X$, endowed with two structures of CS sets, 
$(X,\cS)$ and $(X,\cT)$, 
such that any stratum of $\cT$ is a union of strata of $\cS$. 
The condition required between the strata of $\cS$ and  $\cT$ is equivalent to asking that the identity map defines a
stratified map, $\iota\colon (X,\cS)\to (X,\cT)$. We call $(X,\cS,\cT)$  a \emph{CS-coarsening.}
Let us call \emph{exceptional} a singular stratum of $\cS$ which is in the regular part of $\cT$.
The absence of exceptional strata means that $(X,\cS)$ and $(X,\cT)$ have the same regular part.

\medskip
For the starting point with perversities, we have two choices: 
a perversity $\ov{p}$ defined on $\cS$ or a perversity $\ov{q}$ on $\cT$.
As $\iota$ is a stratified map, a  perversity $\ov{p}$ on $\cS$  
generates a pushforward perversity $\iota_{\star}\ov{p}$ 
on $\cT$ and a perversity $\ov{q}$ on $\cT$ produces a  pullback perversity $\iota^\star\ov{q}$ on $\cS$, see \defref{def:pushback}. In conclusion, we are faced with the existence of two isomorphisms, 
one connecting $\ov{p}$ and $\iota_{\star}\ov{p}$  and the other associating $\iota^\star\ov{q}$ and $\ov{q}$.
Before stating them, we still need to specify the required properties on perversities.
GM-perversities are defined to satisfy, in particular,  the following key  property,
\begin{equation}\label{equa:GMperversity}
\ov{p}(i)\leq \ov{p}(i+1)\leq \ov{p}(i)+1.
\end{equation}
We adapt these inequalities to our more general context  and obtain the notion of $K$-perversity
on a CS-coarsening, see \defref{def:Kper}. 
We begin with the relation between a perversity $\ov p$ on $\cS$ 
and its pushforward perversity $\iota_{\star}\ov p$ on $\cT$.
The first result extends \cite[Theorem A]{CST9} to general perversities.
(Recall that $\ov t$ is the top perversity, defined by
$\ov t(S)=\codim\,S-2$, if $S$ is singular.)

\begin{theorem}\label{thm:coarseningnoexcep}
Let $(X,\cS,\cT)$ be a CS-coarsening
without exceptional strata and  $\ov{p}$ be a  $K$-perversity on $(X,\cS,\cT)$
of pushforward perversity $\iota_{\star}\ov p$ on $\cT$, such that $\ov p\leq \ov t$.
Then, for any regular point $x_{0}$, the identity map induces an isomorphism
$$\pi_{\star}^{\ov{p}}(X,\cS,x_{0})\cong \pi_{\star}^{\iota_{\star}\ov{p}}(X,\cT,x_{0}).$$
\end{theorem}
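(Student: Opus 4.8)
The plan is to compare the two Gajer simplicial sets $\crG_{\ov p}(X,\cS)$ and $\crG_{\iota_\star\ov p}(X,\cT)$ by showing that the identity-induced map between them is a weak equivalence. Since $\iota\colon(X,\cS)\to(X,\cT)$ is a stratified map and a pullback argument on filtered simplices shows that a $\ov p$-allowable singular simplex $\sigma\colon\Delta\to X$ (with respect to $\cS$) is automatically $\iota_\star\ov p$-allowable with respect to $\cT$ — this is essentially the definition of the pushforward perversity in \defref{def:pushback} — we get a natural inclusion $\crG_{\ov p}(X,\cS)\hookrightarrow\crG_{\iota_\star\ov p}(X,\cT)$. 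The task is to prove this inclusion induces isomorphisms on all homotopy groups based at a regular point $x_0$. First I would reduce to a local statement: because $(X,\cS)$ and $(X,\cT)$ are CS sets, both simplicial sets satisfy a Mayer--Vietoris / homotopy-colimit principle over an open cover adapted to the conical charts, so by the usual induction on depth it suffices to treat the model case of a cone $\tc L$ (or $\R^k\times\tc L$) where $L$ carries the two compatible filtrations $\cS_L$, $\cT_L$ inherited from $\cS$, $\cT$, with $\ov p|_L$ a $K$-perversity and $\iota_\star\ov p|_{L}$ its pushforward. The hypothesis $\ov p\le\ov t$ persists under restriction, and the absence of exceptional strata is inherited.

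The heart of the argument is then a cone formula for Gajer's intersection homotopy groups together with the $K$-perversity condition. In \cite{CST9} (their Theorem A, the $K=$ GM-perversity case) one has, for a compact filtered space $L$ of formal dimension $n$ and a perversity value $\ov p$ on the apex stratum,
\[
\pi_\star^{\ov p}\big(\tc L,\,c\big)\;\cong\;
\begin{cases}
\pi_\star^{\ov p|_L}(L,\,\ell_0) & \text{if }\star\le \ov p(\{c\}),\\[1mm]
0 & \text{if }\star> \ov p(\{c\}),
\end{cases}
\]
(suitably interpreted at the boundary dimension, and for a chosen regular base point $\ell_0\in L$ whose ray gives the base point $c$). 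The $K$-perversity inequalities — the analogue of \eqref{equa:GMperversity}, namely that along the poset of strata the perversity grows by steps of at most one and does not decrease "towards" the regular part — are exactly what make this cone computation compatible with passage to the coarsening: the pushforward $\iota_\star\ov p$ assigns to a stratum $T\in\cT$ the value $\ov p$ takes on the smallest stratum of $\cS$ inside $T$, or an appropriate extremum, and the $K$-condition guarantees the truncation degree on $(\tc L,\cT_L)$ matches that on $(\tc L,\cS_L)$. So I would: (i) establish the cone formula for $K$-perversities (either by adapting the proof in \cite{CST9} or by citing an intermediate lemma stated earlier), (ii) show that the two truncation degrees, $\ov p(\{c\})$ computed in $\cS$ and $\iota_\star\ov p(\{c\})$ computed in $\cT$, coincide — using the no-exceptional-strata hypothesis so that the apex stratum is singular in both filtrations and the $K$-growth along the $\cS$-refinement of the link does not change the relevant value — and (iii) invoke the inductive hypothesis $\pi_\star^{\ov p|_L}(L)\cong\pi_\star^{\iota_\star\ov p|_L}(L)$ for the lower-depth space $L$ to close the induction.

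The main obstacle I anticipate is step (ii) combined with the bookkeeping of base points: one must be careful that "regular point" makes sense simultaneously for $\cS$ and $\cT$ (guaranteed by the absence of exceptional strata) and that the local-to-global gluing for $\crG_{\ov p}$ is genuinely available — Gajer's simplicial sets do not obviously satisfy descent, so one likely needs the Mayer--Vietoris/blow-up technology already developed in \cite{CST8,CST9} for these homotopy groups, applied here with a general $K$-perversity rather than a GM one. Verifying that all those tools survive the generalization from codimension-indexed perversities to poset-indexed $K$-perversities, and that the hypothesis $\ov p\le\ov t$ is used precisely where it was used in \cite[Theorem A]{CST9} (to kill the top-dimensional obstruction in the cone and to ensure allowable simplices detect the right homotopy), is where the real work lies; the rest is a formal induction on the depth of the CS-coarsening.
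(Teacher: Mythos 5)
Your overall shape (reduce to conical charts, apply a cone formula, induct on depth) matches the paper's, but there are two genuine gaps. The first and most serious is the globalization step: you propose to assemble the local isomorphisms of intersection \emph{homotopy} groups via a ``Mayer--Vietoris / homotopy-colimit principle'' for the Gajer simplicial sets. No such descent is available: $\crG_{\ov p}(-)$ is not the singular simplicial set of a space, and the only local-to-global tools at hand are an exact Mayer--Vietoris sequence for the \emph{homology} of the Gajer spaces and a Van Kampen theorem for $\pi_1^{\ov p}$. The paper's proof is organized precisely to avoid the problem you flag but do not resolve: it establishes the bijection on $\pi_0$ (\propref{lemapi1}) and the isomorphism on $\pi_1$ (\propref{prop:noexceppi1}) directly, proves the isomorphism $H_\star(\crG_{\ov p}(X,\cS);\iota^\star\gE)\cong H_\star(\crG_{\iota_\star\ov p}(X,\cT);\gE)$ for every local system $\gE$ by running the open-cover induction of \propref{prop:Met1} on homology (\thmref{thm:Coars}), and only then concludes the isomorphism on all $\pi_\star$ by the Whitehead-type \lemref{lem:quillenfini}. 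Inside the local step the same lemma is used in both directions, together with the fact that both cone formulas truncate at $D\ov p(\tv)$, to pass back and forth between homology and homotopy of the conical chart.

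The second gap is in your model case. You assume the local picture is always ``one link $L$ carrying two compatible filtrations,'' but after decomposing into simple coarsenings a non-exceptional singular stratum of $\cS$ can be a \emph{fountain} stratum: its $\cS$-link is a join $\S^{b-1}*L$ while its $\cT$-link is $L$, so the two conical charts do not share an underlying link and the inductive hypothesis on $L$ is not directly the statement you need. Handling this case requires comparing $(\R^a\times\rc\S^{b-1}\times\rc L, \cI\times(\cI\times\rc\cL)_{(\tu,\tv)})$ with $(\R^a\times\rc\S^{b-1}\times\rc L,\cI\times\cI\times\rc\cL)$, i.e.\ the double-cone computation of \lemref{lem:CalJoin}, where the inequality \eqref{eq:AB} coming from the $K$-perversity condition is exactly what makes the two truncation degrees compatible. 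Finally, a small but real slip: the cone formula truncates at the \emph{dual} value $D\ov p$ of the apex, not at $\ov p(\{c\})$ as written in your display.
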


In \cite{CST9}, we have shown the topological invariance of the intersection homotopy groups for 
GM-perversities and CS sets without exceptional strata. \thmref{thm:coarseningnoexcep}  implies this result, in the 
more general  paradigm of the perversities used by H. King (\cite{MR800845}).
In the next result, we also go beyond the  context of \cite{CST9}, by admitting  some exceptional strata.
 With the example of a double suspension on a Poincaré sphere 
 (\cite[Example 5.3]{CST9} or its recall in \exemref{exam:henri}), we know that an invariance
 cannot exist  without additional hypotheses on the exceptional strata.
 We succeed for Thom-Mather spaces with  a ``local hypothesis'' on  links.
In fact, we only need a weaker notion of Thom-Mather space, requiring only  the existence of
 neighborhoods of strata, without control data, see \defref{def:petitThomMather}.

\begin{theorem}\label{thm:coarseningwithexcep}
Let $(X,\cS,\cT)$ be a CS-coarsening such that $(X,\cS)$  is a normal connected Thom Mather space,
with a finite number of strata.
Let  $\ov{p}$ be a  $K$-perversity  on $(X,\cS,\cT)$,
of pushforward perversity $\iota_{\star}\ov p$ on $\cT$, such that $\ov p\leq \ov t$.
We suppose that the links, $L$, of all exceptional strata verify
$\pi_{1}^{\ov p}(\rc L,\rc\cS)=\{1\}$.
Then, for any regular point $x_{0}$, the identity map induces an isomorphism
$$\pi_{\star}^{\ov{p}}(X,\cS,x_{0})\cong \pi_{\star}^{\iota_{\star}\ov{p}}(X,\cT,x_{0}).$$
\end{theorem}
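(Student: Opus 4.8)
The strategy is to reduce to Theorem \ref{thm:coarseningnoexcep} by killing the exceptional strata one at a time, through a local-to-global argument using the Thom-Mather structure. The plan is the following. First I would introduce an intermediate filtration: given the CS-coarsening $(X,\cS,\cT)$ with a finite number of strata, pick a minimal (i.e.\ maximal-dimensional) exceptional stratum $S$ of $\cS$, and let $\cS'$ be the filtration obtained from $\cS$ by merging $S$ into the regular part (equivalently, $\cS'$ is the coarsening of $\cS$ along $\{S\}$). One checks that $(X,\cS,\cS')$ is again a CS-coarsening — here the Thom-Mather hypothesis guarantees $S$ has a tubular neighborhood with conical links so that the result of forgetting $S$ is still a CS set — and that $(X,\cS',\cT)$ is a CS-coarsening with strictly fewer exceptional strata. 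If $\ov p'$ denotes $\iota'_\star\ov p$ (pushforward along $\iota'\colon(X,\cS)\to(X,\cS')$), then $\ov p'$ is again a $K$-perversity, $\ov p'\le\ov t$, and its pushforward onto $\cT$ is $\iota_\star\ov p$; moreover the exceptional strata of $(X,\cS',\cT)$ have the same links (up to restriction) as before, so the $\pi_1^{\ov p}$-triviality hypothesis is inherited. By induction on the number of exceptional strata, it then suffices to prove the single-step statement: the identity induces an isomorphism $\pi_\star^{\ov p}(X,\cS,x_0)\cong\pi_\star^{\ov p'}(X,\cS',x_0)$ when $\cS'$ merges exactly one minimal exceptional stratum $S$ into the regular part.

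For this single step I would argue locally around $S$ and then globalize. Write $T$ for a Thom-Mather tube around $S$, so that $T\to S$ is a fiber bundle with fiber the open cone $\rc L$ on the link $L$, compatibly filtered; note $S$ is regular in $\cS'$, so $\rc L$ is identified with an open subset of the regular part of $(X,\cS')$ near $S$, whereas in $\cS$ it carries the cone filtration $\rc\cS$. The heart of the matter is the local comparison: the identity $\rc L\to \rc L$ (cone filtration on the left, trivial on the right) induces an isomorphism on all $\pi_\star^{\ov p}$, using that $\ov p\le\ov t$ and the hypothesis $\pi_1^{\ov p}(\rc L,\rc\cS)=\{1\}$. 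The point of the $\pi_1$-hypothesis is exactly the one already identified in \cite{CST9}: Gajer's simplicial set is sensitive to the fundamental group of the link on the exceptional stratum, and the double-suspension example shows a nontrivial link $\pi_1$ obstructs invariance; assuming it trivial, the comparison of the (homotopy) cone with the honest cone goes through, and then bundles over $S$ with these fibers are compared by a standard fibration / Mayer--Vietoris-type argument on $\crG_{\ov p}$. Having the isomorphism over $T$, I would cover $X$ by $T$ and $X\menos S$ — on $X\menos S$ the two filtrations $\cS$ and $\cS'$ agree — and use a Van Kampen theorem for $\pi_1^{\ov p}$ together with a Mayer--Vietoris / Blakers--Massey statement for the higher $\pi_\star^{\ov p}$ (as developed in \cite{CST8,CST9}) to patch the local isomorphism over $T$ with the trivial identification over $X\menos S$ into the global isomorphism $\pi_\star^{\ov p}(X,\cS,x_0)\cong\pi_\star^{\ov p'}(X,\cS',x_0)$; the base point $x_0$ is regular and can be taken in $X\menos S$, and connectedness of $X$ ensures the gluing data is coherent.

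Finally, after the induction exhausts all exceptional strata we reach a CS-coarsening $(X,\cS^{(k)},\cT)$ with no exceptional strata, whose pushforward perversity onto $\cT$ is still $\iota_\star\ov p$ and whose perversity $\ov p^{(k)}$ is a $K$-perversity $\le\ov t$; Theorem \ref{thm:coarseningnoexcep} then gives $\pi_\star^{\ov p^{(k)}}(X,\cS^{(k)},x_0)\cong\pi_\star^{\iota_\star\ov p}(X,\cT,x_0)$, and composing with the chain of single-step isomorphisms yields the claim. I expect the main obstacle to be the local comparison step — proving that collapsing the cone filtration $\rc\cS$ to the trivial one does not change $\pi_\star^{\ov p}(\rc L)$ under the hypotheses $\ov p\le\ov t$ and $\pi_1^{\ov p}(\rc L,\rc\cS)=\{1\}$, and in particular controlling what happens in the higher homotopy groups where no hypothesis is imposed; here I would lean on the cone formula for $\pi_\star^{\ov p}$ from \cite{CST9} (which computes $\pi_\star^{\ov p}(\rc L)$ in terms of $\pi_\star^{\ov p}(L)$ and the perversity value, and which degenerates appropriately when $\ov p\le\ov t$) and on the Hurewicz-type results to transfer the $\pi_1$-triviality upward. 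The bookkeeping of how the $K$-perversity property and the links behave under the successive one-stratum coarsenings is routine but needs care, and checking that each $(X,\cS^{(j)})$ remains a (weak) Thom-Mather space, so that the induction hypothesis genuinely applies at each stage, must be done explicitly using \defref{def:petitThomMather}.
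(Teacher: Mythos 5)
Your overall reduction strategy (peel off exceptional strata one at a time via the Thom--Mather tubes, then invoke Theorem~\ref{thm:coarseningnoexcep}) is in the right spirit, and your local analysis of $\rc L$ via the cone formula and the $\pi_1^{\ov p}$-hypothesis matches what is actually needed. But there is a genuine gap in your globalization step: you propose to patch the local isomorphisms on $\pi_\star^{\ov p}$ over the tube $T$ and over $X\menos S$ using ``a Mayer--Vietoris / Blakers--Massey statement for the higher $\pi_\star^{\ov p}$.'' No such tool exists in this setting (nor in \cite{CST8,CST9}), and none can be expected in the required generality: excision fails for homotopy groups, and the available local-to-global results for the Gajer spaces are a Van Kampen theorem for $\pi_1^{\ov p}$ only and a Mayer--Vietoris sequence in \emph{homology}. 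This is exactly why the paper does not globalize homotopy groups directly. Its route is: (a) prove that $\crG(\iota)$ induces an isomorphism on homology with \emph{arbitrary local coefficients} (Theorem~\ref{thm:CoarsThom}), which \emph{does} globalize, via the open-cover induction scheme of Proposition~\ref{prop:Met1} applied to a decomposition into simple coarsenings, the only new local input over the no-exceptional case being the computation $H_\star(\crG_{\ov p}(\rc\S^{b-1},\rc\cI))\cong H_\star(\R^b)$ for the link $\S^{b-1}$ of an exceptional stratum (here $\pi_1^{\ov p}(\rc L)=\{1\}$ forces the homology sphere to be a homotopy sphere, and $D\ov p(\tv)\le b-2$ from (K1) then kills everything via the cone formula); (b) prove the $\pi_0$ and $\pi_1$ isomorphisms separately (Propositions~\ref{lemapi1} and~\ref{prop:pi1ThomM}, the latter using the intermediate stratification $\cS_e$, Van Kampen, and the long exact sequence of the tube fibration); and (c) conclude for all $\pi_\star$ by the local-coefficient Whitehead lemma (Lemma~\ref{lem:quillenfini}). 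Without replacing your Blakers--Massey step by some version of (a)+(c), your argument does not close.

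Two smaller points. First, your intermediate filtrations $\cS'$ obtained by absorbing one exceptional stratum into the regular part must be shown to be CS sets (coarsenings of CS sets need not be, cf.\ Example~\ref{exam:encadrement}); the paper does this for the full merge $\cS_e$ in Proposition~\ref{prop:ST+} using the pre-Thom--Mather structure, and you would need the analogous check at each stage. Second, ``minimal'' strata in the poset order are the \emph{closed}, lowest-dimensional ones, not the maximal-dimensional ones as you write; the induction in the paper is on depth and the number of minimal singular strata.
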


Let $(X,\cS,\cT)$ be a CS-coarsening without exceptional strata of codimension~1.
If $\ov{q}$ is a perversity on $(X,\cT)$, we observe 
(\lemref{Dback})
that the pullback perversity $\iota^\star\ov q$  
is  a $K$-perversity on $(X,\cS,\cT)$ and that we have $\iota_{\star}\iota^\star\ov{q}=\ov{q}$.
Thus the two previous results, applied to $\ov{p}:=\iota^\star\ov{q}$,  
 imply  isomorphisms induced by the identity map,
$$
\pi_{\star}^{\iota^\star\ov{q}}(X,\cS,x_{0})\cong \pi_{\star}^{\ov{q}}(X,\cT,x_{0}),
$$
see Theorems \ref{thm:raffinement} and \ref{thm:raffinementThom} for explicit statements.

\medskip
In \propref{prop:GM}, we  refind the topological invariance of  \cite[Theorem 6.1]{CST9}
under the slight more general form of perversities introduced by H.~King in \cite{MR800845}.
 In this context, the intersection homotopy groups do not depend on the chosen stratification,
 if the regular part remains unchanged.

\medskip
Finally, let’s add a word about the method used in the proofs.
We follow the scheme introduced and developed in \cite{MR4170473,MR4742271}:
any CS-coarsening $(X,\cS,\cT)$  can be decomposed in a series of CS-coarsenings 
in which, roughly, only two strata of $\cS$ are merged to give a stratum of $\cT$. The corresponding conical charts
can be of three types which are described in  the {\it op. cit.} and recalled in \secref{sec:coarsening}. 
They are the keys in the proofs of the main theorems.

\medskip
Let's specify some notation and convention.
Here a topological space, or more simply a space, means a compactly generated topological space and 
our definition of compact space includes the Hausdorff property. 
For a   space $X$, we denote by $\tc X$ the quotient $\tc X:= X \times [0,1]/ (X \times \{ 0\})$.
The \emph{open cone} $\rc X$ is the quotient $\rc X:= X \times [0,1[/ (X \times \{ 0\})$.
 A point of the cone is denoted by $[x,t]$ and the apex is $\tv =[-,0]$.
The symbol $\ast$ is reserved for join operations between filtered spaces.
Indices or superscripts are designated by 
$\star$.

\smallskip
Let $m\in \N$. 
The  sphere of dimension $m$ is denoted $S^m$ and $\S^m$ denotes a homological sphere of dimension $m$.
All stratified maps defined by the identity map are  designed by $\iota$ and their induced homomorphisms,
in homology or homotopy, by $\iota_{\star}$. 

\smallskip
{\bf Acknowledgements.} We are grateful to the referee for her/his comments and suggestions 
which helped us to improve the manuscript.

\section{Filtered spaces and CS sets}
\begin{quote}
The singular spaces of  this section are general filtered spaces and the  CS sets  
introduced by Siebenmann (\cite{MR0319207}).
\end{quote}

\begin{definition}\label{def:filteredspace}
A \emph{filtered space of formal dimension $n$} is a non-empty space $X$ 
with a filtration,
$$
X_{-1}=\emptyset\subseteq X_{0} \subseteq X_{1}\subseteq \dots \subseteq X_{n-2} \subseteq X_{n-1} \subsetneq X_{n} =X,
$$
by closed subsets. 
The subspaces $X_{i}$ are called \emph{skeleta} of the filtration and the 
non-empty components of $X^{i}=X_{i}\menos X_{i-1}$ are the \emph{strata}  of $X$. 
The set of strata, also called the \emph{stratification} of $X$, is denoted by $\cS_{X}$ (or $\cS$ if there is no ambiguity).
The subspace $\Sigma_{\cS}=X_{n-1}$ (or $\Sigma$ if there is no ambiguity) is called the \emph{singular subspace.}
Its complementary, $X\menos \Sigma_{\cS}$, is the \emph{regular subspace} and the
strata in $X\menos \Sigma_{\cS}$ are called \emph{regular.} 
The formal dimension of a stratum $S\subset X^{i}$ is $\dim S=i$; its formal codimension is
$\codim\, S=n-i$. (This notion is not necessarily related to a topological notion of dimension.)
As we are working with different filtrations on the same topological space, we  often use the expression
``the filtered space $(X,\cS)$'' which means that $X$ is a filtered space and $\cS$ is its set of strata.
\end{definition}

The simplest example of filtered space is a non-empty space with the stratification by its connected components; we denote it by  $(X,\cI_{X})$. 

\begin{examples}\label{exam:stratifications}
Let $(X,\cS)$ be a filtered space of dimension $n$.
An open subset $U \subset X$ is a filtered space for the \emph{induced filtration}, $\cS_{U}$, given by
$U_i = U \cap X_{i}$. 
For the sake of simplification, if there is no ambiguity, we keep the notation $\cS$ for the stratification on $U$
and denote by $(U,\cS)$ this induced filtered space.
The product $Y\times X$ with  a  topological space $Y$  is a filtered space for the \emph{product filtration} defined by
$\left(Y \times X\right) _i = Y \times X_{i}$. 
The stratification is denoted by $\cI_{Y}\times \cS$; 
its elements are products of a connected component of $Y$ with a stratum of $\cS$.
 If $X$ is compact, the open cone 
$\rc X = X \times [0,1[ \big/ X \times \{ 0 \}$ is endowed with the 
\emph{cone filtration} defined by
$\left(\rc X\right) _i =\rc X_{i-1}$,  $0\leq i\leq n+1$. 
(By convention, $\rc \,\emptyset=\{ \tv \}$.)
The stratification, denoted by $\rc \cS$, is constituted of the apex $\{\tv=[-,0]\}$
and the products $]0,1[\times S$ with a stratum $S\in\cS$.
\end{examples} 

\begin{definition}\label{def:startifiedmap}
A \emph{stratified map} $f\colon X\to Y$ between two filtered spaces is a continuous map such that,
for each stratum $S$ of $X$, there exists a stratum $S^f$ of $Y$ with $f(S)\subset S^f$
and $\codim\, S^f\leq \codim\, S$.
A \emph{stratified homeomorphism} is a homeomorphism such that $f$ and $f^{-1}$ are stratified maps.
\end{definition}

A stratified map $f$ which is a homeomorphism but for which the reverse application 
$f^{-1}$ is not stratified is called a \emph{homeomorphism and a stratified map.}  
It is fundamental not to confuse it with a stratified homeomorphism.

\begin{definition}\label{def:csset}
A \emph{CS set} of dimension $n$ is a Hausdorff filtered space $(X,\cS)$ of dimension $n$,
whose $i$-dimensional strata are $i$-dimensional topological manifolds for each~$i$, and
such that for each point $x \in X_i \backslash X_{i-1}$, $i\neq n$, there exist
 an open neighborhood $V$ of  $x$ in $X$, endowed with the induced filtration,
an open neighborhood $U$ of $x$ in $X_i\backslash X_{i-1}$, 
a compact filtered space, $(L,\cL)$, of (formal) dimension $n-i-1$,
and a stratified homeomorphism, involving the stratifications recalled in \exemsref{exam:stratifications}:
$$\varphi \colon (U \times \rc L,\cI_{U}\times \rc\cL)\to (V,\cS).$$
 If $\tv$ is the apex of the cone $\rc L$, the homeomorphism
 $\varphi$ is also required to verify $\varphi(u,\tv)=u$, for each $u\in U$.
The pair  $(V,\varphi)$ is a   \emph{conical chart} of $x$
 and the filtered space $L$ is a \emph{link} of $x$.
 The CS set $X$ is called  \emph{normal} if its links are connected.
\end{definition}
Let us note that  a CS set has a topological dimension which coincides with the formal one.
We collect below some well-known basic properties of a CS set $(X,\cS)$, with a proof reference.
\begin{itemize}
\item Any stratum of $X$ is a connected manifold and its closure is a union of strata of lower dimension
(\cite[Lemma 2.3.7]{LibroGreg}).
\item Since the links are non-empty sets, the open subset $X\menos \Sigma$ is dense.
So, for each link $L$, the regular part $L\menos \Sigma_{L}$ is dense in $L$.
\item The set $\cS$ of strata  is a poset (\cite[Proposition A.22]{CST1})
for the relation
$S_{0}\preceq S_{1}$ if $S_{0}\subseteq \ov{S}_{1}$. 
(If $S_{0}\neq S_{1}$, we note $S_{0}\prec S_{1}$.)
 The closed strata are exactly the minimal strata. The open strata are the maximal strata so they coincide with
 the $n$-dimensional strata. 
 The relation $\prec$ is compatible with the dimension, in the sense that 
 $S\prec S'$ implies $\dim S< \dim S'$.
  The \emph{depth of $(X,\cS)$}    is the greatest integer $m$ for which it exists a chain of strata,
 $S_{0}\prec S_{1}\prec \cdots\prec S_{m} $.
  We denote it by $\depth X=m$.
 \item  For any singular stratum $S$, there is a regular stratum $Q$ such that  $S\prec Q$.
 \item Strata are locally closed 
and form a \emph{locally finite family} (\cite[Lemma 2.3.8]{LibroGreg}) which means that  every point has a neighborhood that intersects only a finite number of strata.
 \item  The links of a stratum are not uniquely defined but if  one of them is connected, so all of them are too
(\cite[Remark 2.6.2]{LibroGreg}).
 \item If $X$ is a manifold, then any link is a homology sphere. So, if $X$ has no strata of codimension 1, it is normal
 (\cite[Remark 2.5]{CST9}).
 \end{itemize}

\section{Coarsenings}\label{sec:coarsening}
\begin{quote}
In this section, we specify the notion of CS-coarsenings   and  their main properties.
We also develop  the 3 types of strata encountered in this situation, with an explicit description of the associated local structures.
\end{quote}

\begin{definition}\label{def:coarsening}
A \emph{coarsening} is a triple
$(X,\cS,\cT)$ formed  of two filtered spaces, 
$(X,\cS)$ and $(X,\cT)$, on the same topological space $X$
such that any stratum in $\cT$ is a union of strata of $\cS$. 
We also say that $(X,\cS)$ is a \emph{refinement} of $(X,\cT)$.
If $S\in\cS$, we denote by $S^\iota\in\cT$ the unique stratum of $\cT$ which contains $S$.

If both filtered spaces of a coarsening are CS sets, we speak of a \emph{CS-coarsening.}
A \emph{pointed CS-coarsening} is a quadruplet $(X,\cS,\cT,x_{0})$ formed of a CS-coarsening and a regular point 
$x_{0}\in X\menos \Sigma_{\cS}\subset X\menos\Sigma_{\cT}$.
In the rest of this text, we only consider CS-coarsenings.

Any CS set $(X,\cS)$
has an \emph{intrinsic CS-coarsening} $(X,\cS,\cX^\star)$, where $\cX^{\star}$ is the coarsest  filtration on $X$
as a CS set, see \cite{MR800845} or \cite[Proposition 2.10.5]{LibroGreg}.

\end{definition}

In general a coarsening of a CS set is not a CS set (\cite[Remark 4.3]{MR4170473}). 
In fact, there are even coarsenings,
$(X,\cS,\cR)$ and $(X,\cR,\cT)$, such that $(X,\cS)$ and $(X,\cT)$ are CS sets and $(X,\cR)$
is not. 
We present an example of this situation.

 \begin{example}\label{exam:encadrement}
We give a chain of coarsenings 
 $$
 (X,\cS) \xrightarrow{\iota_1} (X,\cR)  \xrightarrow{\iota_2}  (X,\cT)
 $$
whose  extremities are CS sets and the middle is merely a filtered space.
Let $T^2 \subset \R^3$ be the torus. 
Its cone, $\rc  T^2$ is a closed subset of  $\R^3 \times ]-1,1[$. 
We define $X =\R^3 \times ]-1,1[ \times \R$, choose
two different points $\{P,Q\}$ of $ T^2$ and denote by $\tv$ the apex of the cone.
We define three filtrations on the space  $X$ as follows.
\begin{itemize}
\item $\cS    : X_0 = \{ (\tv , 0) \} \subset X_1 = \rc \{P,Q\} \times \{0\} \subset X_3 = \rc  T^2 \times \{0\} \subset X_4=X$.
 \item $\cR   :  X_1 = \rc \{P,Q\} \times \{0\} \subset X_3 = \rc  T^2 \times \{0\} \subset X_4=X$.
 \item $\cT    :  X_4=X$.
 \end{itemize}
 Notice that $\cS$ is the filtration $\cR_{(\tv,0)}$ as defined in \exemref{exam:joint}.
 The figure below illustrates the subspace $X_3$ with $\cS$ and $\cR$ stratifications.
 \begin{figure}[h]
\includegraphics[width=4cm]{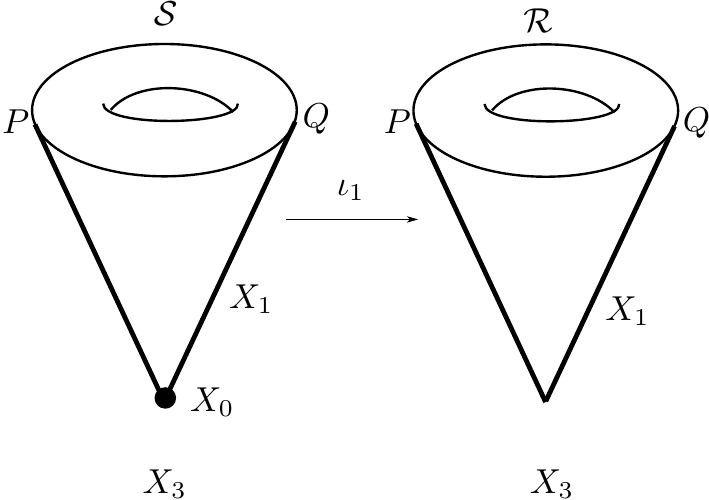}
\end{figure}
 
 The space $X$ is homeomorphic to $\R^5\cong\rc S^4$.
The  stratification $\cS$ of $X$ is the conic filtration where $S^4$ has for only singular stratum 
the torus $T^2$ viewed as a subspace of $S^3\subset S^4$. It is therefore a CS set.

If $(X,\cR)$ were a CS set, then  $X_1\subset X_3$ would be a CS set 
(this follows directly from the definition of a CS set). 
In that case, any two points in $X_1$ would have the same homological link (\cite[Lemma 6.3.23]{LibroGreg}). 
This is not the case, since the link of $\tv \in X_1$ is $ T^2$, while the link of  $P \in X_1$ is $S^2$.
 \end{example}


Let $(X,\cS,\cT)$ be a CS-coarsening.
The condition required between the strata of $\cS$ and $\cT$ is equivalent to asking that the identity map defines a
stratified map, $\iota\colon (X,\cS)\to (X,\cT)$. 
A regular stratum of $\cS$ is necessarily included in a regular stratum of $\cT$. 
For a singular stratum, the situation is different: 
a singular stratum can be included in either a regular stratum or a singular stratum of $\cT$.
Let us begin with the first case.

\begin{definition}\label{def:exceptional}
Let $(X,\cS,\cT)$ be a CS-coarsening. A \emph{singular} stratum $S \in \cS$ is \emph{exceptional} in $(X,\cS,\cT)$ if  
$S^\iota$ is a regular stratum in $\cT$.
\end{definition}

If $S$ is exceptional,  we have $\dim S<\dim S^\iota$.
More generally, the fact that the coarsening keeps or not the dimension of a given stratum is a fundamental distinction. 
So we introduce the following definitions.
  
  \begin{definition}\label{def:source&virtual}
  Let $(X,\cS,\cT)$ be a CS-coarsening.
  A stratum $S\in \cS$ is a \emph{source stratum} in $(X,\cS,\cT)$ if $\dim S = \dim S^\iota$.
 A non-exceptional stratum $S\in \cS$ is a  \emph{fountain stratum} in $(X,\cS,\cT)$ if  $\dim S < \dim S^\iota$.
  \end{definition}
By dimensional reasons, any stratum $T\in\cT$ admits a source stratum $S\in\cS$; i.e., $T=S^\iota$.
Moreover,  the union of source strata of $T$ 
is always an open dense subset of $T$.
Exceptional strata verify the inequality on dimensions but their particularity in the conservation of intersection homotopy 
groups of coarsenings makes us treat them separately. 

\medskip 
 Let us specify the different local situations in a simple CS-coarsening, as it appears in \cite[Proposition 4.4]{MR4170473}. 
 
 \begin{definition}\label{def:simple}
 A CS-coarsening  $(X,\cS,\cT)$ is \emph{simple} if, in the poset $\cV$ formed of   exceptional strata  and  fountain strata,
the relation $S_{1}\preceq S_{2}$ implies $S_{1}=S_{2}$.
 \end{definition}
 
 \noindent
 Suppose $(X,\cS,\cT)$ is a simple CS-coarsening.
 
 \medskip
$\bullet$ Let $S\in\cS$ be an \emph{exceptional stratum} and $x\in S\subset S^\iota$. We already observed that
 $b=\dim S^\iota-\dim S\geq 1$. There exist an $\cS$-conical chart $(\phi,W)$ of $x\in S$ whose link is
 $(\S^{b-1},\cI_{\S^{b-1}})$, a $\cT$-conical chart $(\psi, W)$ of $x\in S^\iota$, and a commutative diagram where $f$ is a homeomorphism,
\begin{equation}\label{equa:exceptional}
\xymatrix{
(\R^a \times \rc \S^{b-1}, \I_{\R^a} \times \rc \mathcal I_{\S^{b-1}})  \ar[d]_{f}\ar[r]^-\phi  & (W,\mathcal S) \ar[d]^\iota\ \\
(\R^{a+b}, \I_{\R^{a+b}} ) \ar[r]^-\psi & (W,\mathcal T).
}
\end{equation}
In reference to \defref{def:startifiedmap}, let us notice that the two horizontal maps, $\phi$ and $\psi$, are stratified homeomorphisms,
in contrast with the vertical maps that are only homeomorphisms and stratified.

\medskip
$\bullet$  Let $S\in\cS$ be a \emph{source stratum} and $x\in S\subset S^\iota$. 
There exist an $\cS$-conical chart $(\phi,W)$ of $x\in S$ whose link is $(L,\mathcal L)$, 
a $\cT$-conical chart $(\psi, W)$ of $x\in S^\iota$ whose  link is $(L,\L')$ for    some coarsening  $\L'$ of $\L$,
and a commutative diagram,
\begin{equation}\label{equa:source}
\xymatrix{
(\R^a \times \rc L, \I_{\R^a} \times  \rc \mathcal L)  \ar[d]_{\iota} \ar[r]^-\psi  & (W,\mathcal S) \ar[d]^\iota\ \\
(\R^a \times \rc L, \I_{\R^a} \times \rc \mathcal L') \ar[r]^-\psi & (W,\mathcal T).
}
\end{equation}
In the diagrams \eqref{equa:exceptional} and \eqref{equa:source}, the map $\iota$ is the identity as a map of spaces.

\medskip
For the   description of the conical charts of a point in a fountain stratum, we need to recall the join of 
two filtered spaces in a particular case.

\begin{example}\label{exam:joint} 
Let $(X,\cS)$ be a filtered space and $\S^m$ be a homological sphere, $m\geq 0$. 
We consider the join defined as a quotient, $\S^{m} * X = \tc\S^{m}\times X/\sim$, 
 with an equivalence relation generated by 
 $
 ([z,1],x) \sim ([z,1],x').
 $
 The elements of  the join $\S^{m} * X$ are denoted by $[[z,t],x]$. 
 We identify $\S^{m}$ with the subspace formed of  classes
$[[z,1],x]   $ and $X$ with the subspace of  classes $[[z,0],x]$. 
The product $\S^m \times \rc X$ is 
also naturally included in the join through the map $(z,[x,t]) \mapsto [[z,1-t],x]]$.
We endow  $\S^m*X$ with the \emph{join filtration} defined by
$$
\S^{m} \subset \S^{m} *X_0 \subset\cdots \subset \S^{m} * X_{n-1} \subsetneq \S^{m} * X_{n}.
$$
The corresponding set of strata, called the \emph{join stratification} and denoted by $\S^m*\cS$, has for elements
the connected components of $\S^m$ and the products $\tc\S^m\times S$ with $S\in \cS$. 
The previous inclusions of $\S^m$, $X$ and $\S^m\times \rc X$ in $\S^m*X$ induce stratified maps.

\medskip 
 The cone on a join can be decomposed as a product via a homeomorphism
 $h\colon \rc(\S^m*X)\to \rc \S^m \times \rc X$, see \cite[5.7.4]{MR2273730}.
 To understand the behavior of $h$ with respect to stratifications, we  introduce the refinement 
 of a stratification by adding a point. More specifically, let $(X,\cS)$ be a filtered space and $x$ be a point of $X$
 such that $\{x \} \not\in \SS$. 
We define a new filtration by adding $\{x\}$ to the bottom of the filtration. We get
$$ 
X_0 \cup \{ x \} \subset X_1\cup \{ x \}   \subset \cdots \subset  X_n \cup \{ x \} =X,
$$
whose associated stratification, denoted by $\cS_{x}$, has for elements 
$\{x\}$ and the connected components of $S\menos \{x\} $ with $S\in\cS$.

\medskip
Denote by $\tu$  the apex of $\rc \S^m$ and by $\tv$ that of $\rc X$.
  The homeomorphism $h$ induces (see \cite[Proposition 3.3]{MR4170473}) a stratified homeomorphism
 \begin{equation}\label{equa:hoeoconejoin}
 h\colon ((\rc (\S^{m} * X), \rc({\S^m}*\cS)) \to (\rc \S^m \times \rc X, (\I_{\rc\S^m} \times \rc \cS)_{(\tu,\tv)}).
 \end{equation}
\end{example}

\medskip
$\bullet$ Let $S\in\cS$ be a \emph{fountain stratum} and $x\in S\subset S^\iota$. 
Denote $b = \dim S^\iota - \dim S \geq 1$. Then, there exist
a $\T$-conical chart   $(\psi,W)$ of $x  \in S^\iota$ whose link is $(L,\cL)$,
an  $\cS$-conical chart  $(\phi, W)$ of  $x   \in S$ whose link is $  (\S^{b-1}*L, {\S^{b-1}}*\cL)$,
and a commutative diagram where $J$ is a homeomorphism,
\begin{equation}\label{equa:virtual}
\xymatrix{
(\R^a \times \rc (\S^{b-1} * L), \I_{\R^a} \times  \rc({\S^{b-1}} * \cL))  \ar[d]_{J} \ar[r]^-\phi  & (W, \cS) \ar[d]^\iota\ \\
(\R^{a+b} \times \rc L, \I_{\R^{a+b}} \times \rc  \cL) \ar[r]^-\psi & (W, \cT).\\
}
\end{equation}
The homeomorphism $J$ is equal to (\cite[Proposition 2.7]{MR4170473})
 $J = (f\times \Id)\circ (\Id\times h)$
where $h \colon \rc( \S^{b-1} \ast L )\to \rc \S^{b-1} \times \rc L$ 
 is the stratified homeomorphism defined in \eqref{equa:hoeoconejoin}  
and the homeomorphism $f \colon (\R^a \times \rc \S^{b-1},(0,\tv))   \to (\R^{a+b},0)$ already appears in \eqref{equa:exceptional}.
Diagram \eqref{equa:virtual} can therefore  be presented in the following alternative form,
where $\tu$  is the apex of   $\rc \S^{b-1}$ and  $\tv$ the apex of $\rc L$,

 \begin{equation}\label{equa:virtual2}
\xymatrix{
(\R^a \times  (\rc \S^{b-1} \times \rc L), \cI_{\R^a} \times (\cI_{\rc\S^{b-1}} \times \rc \cL)_{(\tu,\tv)}) \ar[d]_-{\iota} \ar[r]^-{\phi'}  
&
 (W,\cS) \ar[d]^-{\iota}\ \\
((\R^a \times \rc \S^{b-1} )\times  \rc L, \cI_{\R^a\times \rc S^{b-1}}   \times \rc \mathcal L) \ar[r]^-{\psi'} & (W,\cT).
}
\end{equation}
Both maps $\psi'=\psi\circ (f\times \Id)$ and $\phi'=\phi\circ (\Id\times h^{-1}) $ are stratified homeomorphisms.
Concerning the stratifications, the space 
$\R^a \times \rc \S^{b-1} \times \{ \tv \} $ is a stratum on the bottom line.
On the top line, this space is the union of the  strata $\R^a \times \{(\tu,\tv)\}$ and 
$\R^a \times (\rc \S^{b-1}\menos \{ \tu\})_{cc} \times \{ \tv \}$, 
where $(\rc \S^{b-1}\menos \{ \tu\})_{cc} $ denotes any connected component of $\rc \S^{b-1}\menos \{ \tu\}$.
In short, we have 
\begin{eqnarray}
\R^a \times \{(\tu,\tv)\}
&\preceq&
\R^a \times (\rc \S^{b-1}\menos \{ \tu\})_{cc} \times \{ \tv \},\label{equa:virtualiota}\\
\R^a \times \rc \S^{b-1} \times \{ \tv \} &=&(\R^a \times \{(\tu,\tv)\})^\iota
= (\R^a \times (\rc \S^{b-1}\menos \{ \tu\})_{cc} \times \{ \tv \}.)^\iota.\label{equa:virtualiota2}
\end{eqnarray}

\medskip
Let $(X,\cS,\cT)$ be a CS-coarsening.
The main interest of the concept of  simple coarsening lies in the existence of a decomposition of the identity 
map, $\iota\colon (X,\cS)\to (X,\cT)$, into a sequence of simple coarsenings, established in
\cite[Proposition 2.3]{MR4170473}. Let us use it to study the normality of a coarsening.

\begin{proposition}\label{prop:normal}
Let $(X,\cS,\cT)$ be a CS-coarsening. If $(X,\cS)$ is a normal CS set, then so is $(X,\cT)$.
\end{proposition}

\begin{proof}
From \cite[Proposition 2.3]{MR4742271}, we may suppose that the CS-coarsening is simple.
Recall that if one link of a point in a CS set is connected, then all links of this point are connected 
(\cite[Remark 2.6.2]{LibroGreg}). 
Thus, we have to prove that any singular point $x$ of $(X,\cT)$ has a connected link in $(X,\cT)$.
Let $T\in\cT$ and $S\in\cS$ be the strata containing $x$. We distinguish two cases.
\begin{itemize}
\item If $S$ is a source stratum of $\cT$, we are in the situation \eqref{equa:source}.
Therefore a link of $x$ in $(X,\cT)$ is a link of $(X,\cS)$ which is connected by hypothesis.
\item Suppose that $S$ is not a source stratum of $T$. Since the union of source strata is a dense subset of $X$,
we may assume that there exists a point $x'$ sufficiently close to $x$ that belongs to a source stratum.
By the previous case, every link of $x'$ in $(X,\cT)$ is connected. 
Since both points are close, we can suppose that they belong to the same conical neighborhood of $x$,
$$\varphi\colon (U\times \rc L, \I_{U}\times \rc \cL)\to (V,\cT).$$
Thus $x$ and $x'$ have a common link, $L$, in $(X,\cT)$, which is connected.
\end{itemize}
\end{proof}

\begin{remark}\label{rem:normalpas1}
If $(X,\cS,\cT)$ is a coarsening with $(X,\cS)$ normal, then there is no exceptional strata of codimension 1.
Indeed, if $S\in\cS$ is exceptional, any link of a point $x\in S$ is a homological sphere. By connectivity,
this sphere must be of dimension $\geq 1$. Thus the stratum $S$ must be of codimension $\geq 2$.
\end{remark}
\section{Perversities}\label{sec:perversity}
\begin{quote}
 The notion of perversity on a filtered space is the basis of intersection homology and  homotopy.
  Here we recall definition and properties of general perversities of MacPherson (\cite{RobertSF}) 
  and introduce the  $K$-perversities for which a CS-coarsening invariance holds.
 \end{quote}

\begin{definition}\label{def:perversity}
A \emph{perversity} on a filtered space $(X,\cS)$ is a map $\ov p \colon \cS \to  \ov \Z = \Z \sqcup \{-\infty,\infty\}$ 
verifying $\ov p(S)=0$ for any regular stratum. 
The \emph{top perversity} is  defined by $\ov t (S) =\codim\, S - 2$ for each singular stratum $S$. The \emph{dual perversity} $D\ov p$ of  a perversity $\ov p$ is   defined by $D\ov p = \ov t - \ov p$.
A triple $(X,\cS,\ov{p})$ of a filtered space with a perversity on it is called a \emph{perverse space.}
\end{definition}

To any  map $f\colon \N\to\Z$ such that $f(0)=0$, we associate a perversity defined by  $\ov{p}(S)=f(\codim\, S)$.  
Such perversities depend only on the codimension of the stratum and 
are called \emph{codimensional perversities}.
If $f(\ell)=k$ for any $\ell\neq 0$, the associated perversity is denoted by $\ov k$.
The original perversities of \cite{GM1} and those of \cite{MR800845} are codimensional. 
We will come back to them in \secref{sec:refinementexamples}.

\smallskip
Perversities can be transferred through a stratified map $\varphi\colon (X,\cS)\to (Y,\cT)$. 

\begin{definition}\label{def:pushback}
Let $\ov{q}$ be a perversity on $(Y,\cT)$. The  \emph{pullback perversity} $\varphi^\star\ov q$ on $(X,\cS)$ is 
defined by  $\varphi^\star\ov q (S) = \ov q (S^\varphi)$ for each singular stratum $S \in \cS$.
  Let $\ov{p}$ be a perversity on $(X,\cS)$. 
The \emph{pushforward perversity} $\varphi_\star\ov p$ on $(Y,\cT)$ 
 is defined by $\varphi_\star\ov p (T) = \inf\{ \ov p ( Q ) \mid  Q^\varphi = T\}$ for each singular $T \in \cT$, with 
  $\inf \emptyset =\infty$. 
\end{definition}

Notice that  $\varphi_\star \varphi^\star \ov q = \ov q$ and $\varphi^\star \varphi_\star \ov p  \leq  \ov p$.
(When $\varphi$ is a stratified homeomorphism, we have 
$\varphi_\star \ov p (S^\varphi) =  \ov p (S)$ for each $S \in \cS$ and $\varphi^\star \varphi_\star \ov p  =  \ov p$.)
In the following, the map $\varphi$ is induced by the identity. We denote $\iota_{\cS,\cT}\colon (X,\cS)\to (X,\cT)$
(or $\iota$ if there is no ambiguity) the induced stratified map when $(X,\cS,\cT)$ is a CS-coarsening.

\bigskip
The perversities of \cite{GM1,MR800845} verify a key property for the topological invariance of intersection homology, called \emph{the growing property}. 
A codimensional perversity verifies the growing property if  
the following inequalities are verified for each integer $i$,
\begin{equation}\label{equa:growing}
\ov p(i)\leq \ov p(i+1)\leq \ov p(i)+1.
\end{equation}
The next definition is an adaptation of  these inequalities to  perversities defined on the set of strata.
They are the perversities for which a CS-coarsening invariance holds.

\begin{definition}\label{def:Kper}
 A   \emph{$K$-perversity} $\ov p$ on  a CS-coarsening $(X,\cS,\cT)$  is a
 perversity on $(X,\cS)$ verifying the  conditions (K1) and (K2).
\begin{enumerate}[(K1)]
\item For any strata $S,Q \in \cS$ such that $S\preceq Q$ and $S^\iota= Q^\iota$, one has
\begin{equation}\label{eq:magica}
 \ov p(Q) \leq \ov p(S) \leq \ov p(Q) + \ov t(S) - \ov t(Q).
\end{equation}
\item For any strata $S,Q \in \cS$ with $\dim S =\dim Q$ and $S^\iota= Q^\iota$, one has
\begin{equation}\label{eq:magicaTris}
 \ov p(Q) = \ov p(S).
\end{equation}
\end{enumerate}
\end{definition}

\begin{remark}\label{rem:propertyKperversity}
We list some basic observations on $K$-perversities.

\smallskip
i) The inequalities \eqref{eq:magica}  are equivalent to 
$$\ov p(Q)\leq \ov p(S)\;\text{ and } D\ov p(Q)\leq D\ov p(S).$$
With the hypothesis $\dim S=\dim Q$, the equality \eqref{eq:magicaTris} is equivalent to $D\ov p(Q)=D\ov p(S)$.
So, the perversities $\ov p$ and   $D\ov p$ are  simultaneously  $K$-perversities.

\smallskip
ii) Two regular strata, $S$ and $Q$,  always verify the condition  \eqref{eq:magica}.
 If the stratum $S$ is  exceptional, as a singular stratum is always included in the closure of a regular one, say $Q$, 
 we have
 $S\preceq Q$ and $S^\iota=Q^\iota$. So the  condition \eqref{eq:magica} gives
\begin{equation}\label{eq:regS}
0 \leq \ov p(S) \leq \ov t(S),
\end{equation}
which implies the following properties.
\begin{itemize}
\item A $K$-perversity takes positive values on exceptional strata. 
\item The existence of a $K$-perversity implies the \emph{non-existence of exceptional strata of codimension~1,}
   since $0\leq \ov t(S)=-1$ is impossible.
\end{itemize}

\smallskip
iii) Finally, if $(X,\cS,\cX^\star)$ is  the intrinsic CS-coarsening   (\cite{MR800845}),  
 \defref{def:Kper} is equivalent to the definition of $K$-perversity
given in \cite[Definition 6.8]{CST3}.
\end{remark}

We specify the behaviors of $K$-perversities with  pullback and pushforward processes. 

\begin{lemma}\label{LemTec}
Let $\ov{p}$ be a $K$-perversity on the CS-coarsening $(X,\cS,\cT)$. Then, the following properties are verified:
\begin{enumerate}[i)]
\item $\iota_{\star}\ov p(T)=\ov p(S)$ if $S\in\cS$ is a source stratum of $T\in\cT$,
\item   $\iota^\star  D\iota_\star\ov p \leq D\ov p$,
\item  $\iota_\star D \ov p = D \iota_\star \ov p$,
\item if $\ov p\leq \ov t$, then $\iota_{{\star}}\ov p\leq \ov t$.
\end{enumerate}
\end{lemma}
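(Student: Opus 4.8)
The four statements are of increasing difficulty, and I would establish them in the stated order, each feeding into the next. For (i), let $S\in\cS$ be a source stratum of $T\in\cT$, so $\dim S=\dim S^\iota=\dim T$. By definition of the pushforward, $\iota_\star\ov p(T)=\inf\{\ov p(Q)\mid Q^\iota=T\}$. The inequality $\iota_\star\ov p(T)\le\ov p(S)$ is immediate since $S$ is one such $Q$. For the reverse inequality, take any $Q\in\cS$ with $Q^\iota=T$; I must show $\ov p(S)\le\ov p(Q)$. Since the source strata of $T$ form a dense open subset of $T$ and $S$ is source, $\dim S=\dim T\ge\dim Q$. If $\dim Q<\dim S$ then $Q$ is a fountain (or exceptional) stratum of $T$; pick a source stratum $S'$ of $T$ with $Q\preceq S'$ — this exists because every point of $\overline Q$ can be approached from the dense set of source strata, more precisely because $Q\subseteq\overline{S'}$ for some maximal-dimensional stratum of $T$ in $\cS$. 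Applying (K1) to the pair $Q\preceq S'$ with $Q^\iota=(S')^\iota=T$ gives $\ov p(S')\le\ov p(Q)$; and by (K2), $\ov p(S)=\ov p(S')$ since both are source strata of $T$ (same dimension, same $\iota$-image). Hence $\ov p(S)\le\ov p(Q)$, and this independence of the chosen source stratum simultaneously shows $\iota_\star\ov p(T)=\ov p(S)$ is well defined.

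For (iii), $\iota_\star D\ov p=D\iota_\star\ov p$: evaluate both sides on a singular $T\in\cT$. Using (i) and Remark~\ref{rem:propertyKperversity}(i) (which says $D\ov p$ is also a $K$-perversity), pick a source stratum $S$ of $T$, so $\iota_\star D\ov p(T)=D\ov p(S)=\ov t(S)-\ov p(S)$. On the other side, $D\iota_\star\ov p(T)=\ov t(T)-\iota_\star\ov p(T)=\ov t(T)-\ov p(S)$. So it suffices to note $\ov t(S)=\ov t(T)$, which holds precisely because $S$ is a source stratum, i.e.\ $\codim S=\codim S^\iota=\codim T$ in $X$. Part (iv) is then quick: if $\ov p\le\ov t$, then for singular $T\in\cT$, choosing a source stratum $S$ of $T$, $\iota_\star\ov p(T)=\ov p(S)\le\ov t(S)=\ov t(T)$, again using that source strata preserve codimension.

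For (ii), $\iota^\star D\iota_\star\ov p\le D\ov p$: evaluate on a singular $S\in\cS$. By definition of pullback, $\iota^\star D\iota_\star\ov p(S)=D\iota_\star\ov p(S^\iota)$. If $S^\iota$ is regular (i.e.\ $S$ is exceptional), then $\iota_\star\ov p(S^\iota)=0$ by the regularity convention, so the left side is $0\le D\ov p(S)=\ov t(S)-\ov p(S)$, which holds by \eqref{eq:regS} (a $K$-perversity satisfies $\ov p(S)\le\ov t(S)$ on exceptional strata). If $S^\iota$ is singular, let $S'$ be a source stratum of $S^\iota$; by (iii) and (i), $D\iota_\star\ov p(S^\iota)=\iota_\star D\ov p(S^\iota)=D\ov p(S')=\ov t(S')-\ov p(S')$. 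Now $S\preceq S'$ is not automatic, but both $S$ and $S'$ are strata of $\cS$ inside $T=S^\iota$, and $S'$ is a maximal-dimensional one. Choosing $S'$ so that $S\preceq S'$ (possible as above), apply (K1) to $S\preceq S'$ with common $\iota$-image: this gives $\ov p(S')\le\ov p(S)\le\ov p(S')+\ov t(S)-\ov t(S')$, hence $\ov t(S')-\ov p(S')\le\ov t(S)-\ov p(S)$, i.e.\ $D\ov p(S')\le D\ov p(S)$; combined with (K2) to remove dependence on which source stratum $S'$ was chosen, this yields $\iota^\star D\iota_\star\ov p(S)=D\ov p(S')\le D\ov p(S)$ as required.

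\textbf{Main obstacle.} The routine part is the bookkeeping with $\ov t$ and codimensions. The genuinely delicate point, underlying (i) and (ii), is the claim that for any $Q\in\cS$ with $Q^\iota=T$ there exists a \emph{source} stratum $S'$ of $T$ with $Q\preceq S'$ — i.e.\ that the dense-open set of source strata of $T$ ``dominates'' every stratum of $\cS$ lying in $T$ in the poset order. This needs the fact that source strata of $T$ form an open dense subset (stated after Definition~\ref{def:source&virtual}) together with the closure structure of CS sets (closures are unions of lower strata), so that $\overline{S'}$ for the relevant source stratum does contain $Q$. Once that is in hand, (K1)/(K2) do all the arithmetic, and the independence-of-choice afforded by (K2) is what makes $\iota_\star$ well behaved throughout.
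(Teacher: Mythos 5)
Your proposal is correct, and for parts (iii) and (iv) it coincides with the paper's argument: both reduce to evaluating at a source stratum $S$ of $T$ via (i) (applied to $D\ov p$, which is a $K$-perversity by Remark~\ref{rem:propertyKperversity}) and using that source strata preserve codimension, so $\ov t(S)=\ov t(T)$.

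The difference is in parts (i) and (ii), which the paper does not prove at all but delegates to \cite[Lemmas 5.3 and 5.4]{MR4170473}. You reconstruct them directly, and your argument is sound. The crux, which you correctly isolate, is that any $Q\in\cS$ with $Q^\iota=T$ is dominated in the poset $(\cS,\preceq)$ by some source stratum $S'$ of $T$: since the source strata form a dense open subset of $T$ and the family $\cS$ is locally finite, a point of $Q$ lies in $\ov{S'}$ for a single source stratum $S'$, and since closures of strata are unions of strata this forces $Q\subseteq\ov{S'}$, i.e.\ $Q\preceq S'$. Then (K1) gives $\ov p(S')\leq\ov p(Q)$ and (K2) identifies $\ov p(S')$ with $\ov p(S)$ for any other source stratum $S$, which yields (i). Two small remarks. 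First, in (i) the parenthetical ``(or exceptional)'' is vacuous when $T$ is singular (a stratum with singular $\iota$-image cannot be exceptional), and when $T$ is regular the identity is the trivial $0=0$; this costs nothing. Second, your case analysis in (ii) can be shortened: once (iii) is known, $\iota^\star D\iota_\star\ov p=\iota^\star\iota_\star D\ov p\leq D\ov p$ follows from the general inequality $\varphi^\star\varphi_\star\leq\mathrm{id}$ recorded after Definition~\ref{def:pushback}, with no need to revisit (K1) or the exceptional case. Your longer route is nevertheless correct and has the merit of being self-contained.
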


\begin{proof}
Properties i) and ii) are exactly \cite[Lemma 5.3 and 5.4]{MR4170473}.
It remains to prove iii) and iv). 
Let $T \in \TT$ be a singular stratum. We know that there is a source stratum  $S \in  \cS$ of $T$. 
Since $D\ov p$ is a $K$-perversity, we have from i),
$\iota_{\star}D \ov p(T)=D\ov p(S)$.
By replacing dual perversities with their definition and using i) a second time, we get iii) since
$$\iota_{\star}D\ov p(T)=\ov t(S)-\ov p(S)=\ov t(T)-\iota_{\star}\ov p(T)=D\iota_{\star}\ov p(T).$$
Similarly, the last property is a consequence of
$$\iota_{{\star}}\ov p(T)=\ov p(S)\leq \ov t(S)=\codim\,S-2=\codim\,T-2=\ov t(T).$$
\end{proof}

\begin{lemma}[{\cite[Lemma 5.6]{MR4170473}}\label{DI}] 
Let $(X,\cS,\cR)$ and $(X,\cR,\cT)$ be two CS-coarsenings. 
If  $\ov p$ is a $K$-perversity on the CS-coarsening $(X,\cS,\cT)$, 
then the following properties are verified.
\begin{enumerate}[i)]
\item The perversity $\ov p$ is a $K$-perversity on the CS-coarsening $(X,\cS,\cR)$.
\item The perversity $\iota_\star\ov p$ is a $K$-perversity on the CS-coarsening $(X,\cR,\cT)$. 
\end{enumerate}
\end{lemma}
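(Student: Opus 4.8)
The plan is to verify directly that $\ov p$ satisfies conditions (K1) and (K2) of \defref{def:Kper} relative to the coarsening $(X,\cS,\cR)$, and then that $\iota_\star\ov p$ satisfies them relative to $(X,\cR,\cT)$. Throughout, I will write $\iota_{\cS,\cR}$, $\iota_{\cR,\cT}$ and $\iota_{\cS,\cT}$ for the three identity stratified maps, noting the compatibility $\iota_{\cS,\cT}=\iota_{\cR,\cT}\circ\iota_{\cS,\cR}$, so that for $S\in\cS$ we have $(S^{\iota_{\cS,\cR}})^{\iota_{\cR,\cT}}=S^{\iota_{\cS,\cT}}$; I abbreviate these to $S^{\cR}$, $S^{\cT}$.

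For part i), let $S,Q\in\cS$. First suppose $S\preceq Q$ (in the poset of $\cS$) with $S^{\cR}=Q^{\cR}$. Since every stratum of $\cR$ is a union of strata of $\cS$ which is contained in a stratum of $\cT$, the equality $S^{\cR}=Q^{\cR}$ forces $S^{\cT}=Q^{\cT}$ as well. Hence the hypothesis ``(K1) for $(X,\cS,\cT)$'' applies to the pair $S\preceq Q$ and gives precisely the inequality \eqref{eq:magica}, which is what (K1) for $(X,\cS,\cR)$ demands. Similarly, if $\dim S=\dim Q$ and $S^{\cR}=Q^{\cR}$, then again $S^{\cT}=Q^{\cT}$, so (K2) for $(X,\cS,\cT)$ yields $\ov p(S)=\ov p(Q)$, which is (K2) for $(X,\cS,\cR)$. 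This proves i).

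For part ii), I must check (K1) and (K2) for the perversity $\iota_\star\ov p$ on $\cR$ relative to the coarsening $(X,\cR,\cT)$. Let $R_0,R_1\in\cR$. The key tool is \lemref{LemTec}~i), which computes $\iota_\star\ov p$ on a stratum of $\cR$ as the value of $\ov p$ on any of its source strata in $\cS$. So pick source strata $S_j\in\cS$ of $R_j$ for $j=0,1$, meaning $S_j^{\cR}=R_j$ and $\dim S_j=\dim R_j$; then $\iota_\star\ov p(R_j)=\ov p(S_j)$. Now suppose $R_0\preceq R_1$ in $\cR$ and $R_0^{\cT}=R_1^{\cT}$. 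The point to establish is that one may choose the source strata so that $S_0\preceq S_1$ in $\cS$: since $S_0$ lies in the closure of $R_0$, which lies in the closure of $R_1=\overline{\bigcup\{\text{strata of }\cS\text{ in }R_1\}}$, and source strata form an open dense subset of $R_1$ (as recalled after \defref{def:source&virtual}), a standard argument on closures of strata in a CS set lets us replace $S_1$ by a source stratum of $R_1$ with $S_0\prec S_1$ (or $S_0=S_1$ if $R_0=R_1$); one also has $S_0^{\cT}=R_0^{\cT}=R_1^{\cT}=S_1^{\cT}$. Applying (K1) for $\ov p$ on $(X,\cS,\cT)$ to the pair $S_0\preceq S_1$, together with $\ov t(S_j)=\ov t(R_j)$ (equal codimensions) and \lemref{LemTec}~i), translates \eqref{eq:magica} for $S_0,S_1$ into \eqref{eq:magica} for $R_0,R_1$. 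For (K2), if $\dim R_0=\dim R_1$ and $R_0^{\cT}=R_1^{\cT}$, pick source strata $S_0,S_1$; then $\dim S_0=\dim R_0=\dim R_1=\dim S_1$ and $S_0^{\cT}=S_1^{\cT}$, so (K2) for $\ov p$ gives $\ov p(S_0)=\ov p(S_1)$, hence $\iota_\star\ov p(R_0)=\iota_\star\ov p(R_1)$ by \lemref{LemTec}~i). This proves ii).

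The main obstacle is the poset-comparability step in part ii): verifying that when $R_0\preceq R_1$ in $\cR$ one can select source strata $S_0\preceq S_1$ in $\cS$ with matching images in $\cT$. Once this compatibility between the three posets and the source-stratum selection is pinned down — which is a purely topological fact about closures of strata in CS sets and the density of source strata — the rest reduces to bookkeeping with \lemref{LemTec}~i) and the definitions. Since this is quoted as \cite[Lemma 5.6]{MR4170473}, it suffices to observe that the argument is identical once \lemref{LemTec} is in hand; I would present it in the condensed form above.
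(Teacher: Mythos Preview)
The paper gives no proof of this lemma, citing \cite[Lemma~5.6]{MR4170473} instead; your direct verification of (K1) and (K2) via \defref{def:Kper} and \lemref{LemTec}~i) is the natural route and is correct. Two small points worth making explicit: first, applying \lemref{LemTec}~i) to the coarsening $(X,\cS,\cR)$ in part~ii) requires that $\ov p$ be a $K$-perversity on $(X,\cS,\cR)$, which is exactly part~i); second, the poset-comparability step you rightly flagged---given $R_0\preceq R_1$ in $\cR$ and a source stratum $S_0$ of $R_0$, finding a source stratum $S_1$ of $R_1$ with $S_0\preceq S_1$---follows from local finiteness of $\cS$ together with density of source strata in $R_1$: pick $x\in S_0\subset\overline{R_1}$, take a neighborhood meeting only finitely many $\cS$-strata, and note that $x$ must lie in the closure of one of the finitely many source strata of $R_1$ meeting that neighborhood; since closures of strata in a CS set are unions of strata, $S_0\subset\overline{S_1}$.
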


\begin{lemma}\label{Dback} 
Let $(X,\cS,\cT)$  be a CS-coarsening without exceptional strata of codimension 1. 
If  $\ov q$ is a perversity on   $(X,\cT)$, then the pullback $\iota^\star\ov q$
is a $K$-perversity on the CS-coarsening $(X,\cS,\cT)$.
Moreover, the inequality $\ov q\leq \ov t$ implies $\iota^\star\ov q\leq \ov t$.
\end{lemma}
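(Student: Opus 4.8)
The statement has two parts: first that $\iota^\star\ov q$ is a $K$-perversity on $(X,\cS,\cT)$, and second that $\ov q\leq\ov t$ implies $\iota^\star\ov q\leq\ov t$. I would dispatch the second part first, since it is the easiest: for any singular $S\in\cS$, one has $\iota^\star\ov q(S)=\ov q(S^\iota)$ by \defref{def:pushback}. If $S$ is a source stratum of $S^\iota$, then $\codim S=\codim S^\iota$, so $\iota^\star\ov q(S)=\ov q(S^\iota)\leq\ov t(S^\iota)=\codim S^\iota-2=\codim S-2=\ov t(S)$. If $S$ is not a source stratum, then it is a fountain stratum (there are no $1$-exceptional strata by hypothesis, and exceptional strata of higher codimension can occur, so I should be careful here) — wait, exceptional strata can still be present. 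For an exceptional or fountain stratum $S$, we have $\dim S<\dim S^\iota$, hence $\codim S>\codim S^\iota$, so $\iota^\star\ov q(S)=\ov q(S^\iota)\leq\ov t(S^\iota)=\codim S^\iota-2<\codim S-2=\ov t(S)$. In all cases $\iota^\star\ov q(S)\leq\ov t(S)$, as desired.

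For the first part, I must verify conditions (K1) and (K2) of \defref{def:Kper} for the perversity $\ov p:=\iota^\star\ov q$. Condition (K2) is immediate: if $S,Q\in\cS$ satisfy $\dim S=\dim Q$ and $S^\iota=Q^\iota$, then $\iota^\star\ov q(S)=\ov q(S^\iota)=\ov q(Q^\iota)=\iota^\star\ov q(Q)$. For (K1), take $S,Q\in\cS$ with $S\preceq Q$ and $S^\iota=Q^\iota=:T$; since both map to the same stratum $T$ of $\cT$ and values of $\iota^\star\ov q$ on $\cS$-strata depend only on their image in $\cT$, we get $\iota^\star\ov q(S)=\ov q(T)=\iota^\star\ov q(Q)$. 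So the middle inequality of \eqref{eq:magica} is an equality $\ov p(Q)=\ov p(S)$, hence the left inequality $\ov p(Q)\leq\ov p(S)$ holds trivially. For the right inequality we must check $\ov p(S)\leq\ov p(Q)+\ov t(S)-\ov t(Q)$, i.e.\ $0\leq\ov t(S)-\ov t(Q)=\codim S-\codim Q=\dim Q-\dim S$, which holds because $S\preceq Q$ implies $\dim S\leq\dim Q$ (as recalled among the basic properties of CS sets in \secref{sec:coarsening}: the relation $\prec$ is compatible with dimension). This requires $S$ and $Q$ to have well-defined finite values under $\ov p$, or rather: $\ov q(T)\in\ov\Z$, and if $\ov q(T)=\pm\infty$ the inequalities in $\ov\Z$ still make sense and hold. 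I should also handle the case where $S$ or $Q$ is regular — but if $S^\iota=Q^\iota$ is regular then both $S$ and $Q$ are regular, and regular strata satisfy \eqref{eq:magica} automatically (Remark \ref{rem:propertyKperversity}(ii)); if $S^\iota=Q^\iota$ is singular, the computation above applies with $\ov p(S)=\ov p(Q)=\ov q(T)\geq 0$ and possibly $S$ regular with $\ov p(S)=0$: then $0\leq\ov q(T)$, which holds, and the argument still goes through, noting that the identity $\iota^\star\ov q(S)=\ov q(S^\iota)$ as stated in \defref{def:pushback} is only for singular $S$, so for regular $S$ one uses $\ov p(S)=0\leq\ov q(T)$ directly.

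The only genuinely delicate point is bookkeeping around values in $\ov\Z=\Z\cup\{-\infty,\infty\}$ and the regular/singular dichotomy; there is no real geometric obstacle, since the pullback perversity is constant on each fiber $\iota^{-1}(T)$, which forces all the $K$-inequalities to reduce to the dimension inequality $\dim S\leq\dim Q$ for $S\preceq Q$. The role of the hypothesis "no $1$-exceptional stratum" is essentially to guarantee, via Remark \ref{rem:propertyKperversity}(ii), that a $K$-perversity can exist at all on $(X,\cS,\cT)$ and that $\iota^\star\ov q\geq 0$ is compatible with $\ov t\geq 0$ on the relevant strata — concretely, it rules out an exceptional $S$ of codimension $1$ for which one would need $0\leq\ov q(S^\iota)\leq\ov t(S)=-1$, an impossibility. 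So I would close the proof by invoking this observation to confirm consistency, and the verification is complete.
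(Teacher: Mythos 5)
Your overall strategy is the same as the paper's: verify (K1) and (K2) directly from the fact that $\iota^\star\ov q$ is constant on the fibers of $\iota$, and then check the bound $\iota^\star\ov q\leq\ov t$ by comparing codimensions. The (K2) verification and the case where $S^\iota=Q^\iota$ is singular are correct and match the paper.

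There is, however, a genuine gap, and it sits exactly where the hypothesis ``no 1-exceptional stratum'' must be used. In your (K1) argument you assert that ``if $S^\iota=Q^\iota$ is regular then both $S$ and $Q$ are regular.'' This is false: an exceptional stratum is by definition a \emph{singular} stratum of $\cS$ whose image $S^\iota$ is regular in $\cT$, so the configuration $S$ exceptional, $Q$ regular, $S\preceq Q$, $S^\iota=Q^\iota$ does occur and is not covered by \remref{rem:propertyKperversity}(ii). In that configuration your identity $\ov t(S)-\ov t(Q)=\codim S-\codim Q$ also fails, because $\ov t(Q)=0$ for $Q$ regular (not $\codim Q-2$); the right-hand inequality of \eqref{eq:magica} reduces to $0\leq\ov t(S)=\codim S-2$, which is precisely what the absence of 1-exceptional strata guarantees. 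This is the case the paper isolates explicitly (``There remains the exceptional case for $S$\dots''), and your proof as written never verifies it — your closing remark about codimension-one exceptional strata shows you sense where the hypothesis enters, but it does not substitute for the missing verification. The same confusion between $\ov t(T)=0$ and $\codim T-2$ for regular $T$ infects your argument for the second assertion: for $S$ exceptional the chain $\ov q(S^\iota)\leq\ov t(S^\iota)=\codim S^\iota-2<\codim S-2$ reads $0\leq-2$ at its second step; the correct argument is simply $\iota^\star\ov q(S)=\ov q(S^\iota)=0\leq\codim S-2=\ov t(S)$, again using that $S$ is not 1-exceptional. Both gaps are repairable by adding the exceptional case explicitly, but as written the proof does not establish the lemma.
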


\begin{proof}
Let $S,\,Q\in \cS$ such that $S^\iota=Q^\iota$. We have to prove (K1) and (K2).
Suppose $S\preceq Q$. With the definition of $\iota^\star\ov q$, the inequalities \eqref{eq:magica} of (K1) become
$$
\ov q(Q^\iota)\leq \ov q (S^\iota)\leq \ov q(Q^\iota)+\ov t(S)-\ov t(Q),$$
which is equivalent to
$\ov t(S)-\ov t(Q)\geq 0$. This last inequality is true if $S$ and $Q$ are both regular or both singular. 
There remains the exceptional case for $S$, where the previous inequality is reduced to 
$\ov t(S)=\codim\, S-2\geq 0$. This is true since there is no exceptional strata of codimension 1.

\smallskip
For (K2), we suppose $\dim S=\dim Q$. By definition of $\iota^\star\ov q$, we have, as expected,
$$
\iota^\star\ov q(Q)=\ov q(Q^\iota)=\ov q(S^\iota)=\iota^\star\ov q(S).
$$

\smallskip
Suppose $\ov q\leq \ov t$ and let $S\in\cS$ be  a singular stratum. If $S^\iota$ is a regular stratum, then
$\iota^\star\ov q(S)=\ov q(S^\iota)=0\leq \ov t(S)$, since there is no exceptional strata of codimension 1. 
If $S^\iota$ is singular, we have:
$$
\iota^\star \ov q(S) =  \ov q(S^\iota) \leq  \ov t(S^\iota) = \codim (S^\iota) -2\leq \codim (S) -2 =
 \ov t(S).
$$
\end{proof}

\begin{remark}\label{rem:perversityonlink}
Let us specify how some particular induced perversities are defined. 

\smallskip
(i) The first point concerns the induced perversity on a link $L$, from a given perversity $\ov p$ on a CS set $X$.
If  $ \varphi\colon \R^k\times \rc L\to V$ is a conical chart of $x\in S\in \cS$, the
open subset $V\menos S$  of $X$  is given with the induced perversity still denoted by $\ov{p}$. 
If $T$ is a stratum of $L$, then $\R^k\times ]0,1[\times T$ is a stratum of $V\menos S$ and we set
$\ov{p}(T)=\ov{p}(\R^k\times [0,1[\times T)$. We also set $\ov{p}(\{\tv\})=\ov{p}(S)$ for the apex $\tv$ of $\rc L$.

\smallskip
(ii) The second point concerns the left vertical arrow of \eqref{equa:virtual2}, with the identity map,
$$ \iota\colon
A:=(\R^a \times  (\rc \S^{b-1} \times \rc L), \cI_{\R^a} \times (\cI_{\rc\S^{b-1}} \times \rc \cL)_{(\tu,\tv)})
\to
B:= ((\R^a \times \rc \S^{b-1} )\times  \rc L, \cI_{\R^a\times \rc S^{b-1}}   \times \rc \mathcal L).
$$
We start with a $K$-perversity $\ov p$ on $A$ and denote $\ov q=\iota_{{\star}}\ov p$ its pushforward perversity on $B$.
Let us notice that $D\ov q$ induces  a perversity on $(\rc L,\rc \cL)$ as in the  point (i).
From \lemref{LemTec}, we know that $D\ov q=\iota_{{\star}}D\ov p$.
The strata
$S=\R^a\times\{(\tu,\tv)\}$
and
$Q=\R^a\times \left(\rc \S^{b-1}\menos\{\tu\}\right)\times\{\tv\}$
verify
$$S\preceq Q \;\text{and}\;S^\iota=Q^\iota=\R^a\times\rc \S^{b-1}\times\{\tv\}.
$$
As $D\ov p$ is a K-perversity (\defref{def:Kper} and \remref{rem:propertyKperversity}), we have
\begin{equation}\label{eqmagicater}
D\ov p(Q)\leq D \ov p(S)\leq D\ov p(Q)+\ov t(S)-\ov t(Q)=D\ov p(Q)+b.
\end{equation}
In this particular case, the first item of this remark gives
$D\ov p(S)=D\ov p(\{\tu,\tv\})$ and
$D\ov p(Q)=D\ov p(\{\tv\})$.
With these values, the inequalities \eqref{eqmagicater} become
\begin{equation}\label{eq:AB}
D\ov p(\{\tv\})\leq D\ov p(\{(\tu,\tv)\})\leq D\ov p(\{\tv\})+b.
\end{equation}
Let us also notice that the two stratifications coincide on the open subset
$$\R^a\times (\tc \S^{b-1}\times \rc L)\menos (\R^a\times \{(\tu,\tv)\}).$$
Thus the two perversities
 $D\ov p$ and $D\ov q$ also coincide on this subset.
\end{remark}

\section{Intersection homotopy groups}\label{sec:inthomo}
 \begin{quote}
Intersection homotopy groups of a filtered space $(X,\cS)$, endowed with a perversity,  are defined 
as the homotopy groups of a simplicial set introduced by P. Gajer (\cite{Gajer1}).
In this section, we recall their definitions and properties used in the following.
\end{quote}
 
 Let $(X,\cS,\ov p)$ be a perverse space. 
 A simplex $\sigma \colon \Delta \to X$ is  
\emph{$\ov p$-allowable} if we have, for each \emph{singular} stratum $S\in\cS$,
\begin{equation}\label{allow}
\dim \sigma^{-1}(S) \leq \dim \Delta - D\ov p(S) -2.
\end{equation}
Here the dimension chosen for $\dim \sigma^{-1}(S)$ is the polyhedral dimension (\cite{Gajer1}).
We do not need the definition; 
it is enough to know that it gives the same properties as the classical dimension by the skeleton, see \cite{CST8}.
The simplex $\sigma$ is \emph{$\ov p$-full} if each of its faces is $\ov p$-allowable.
 The set of $\ov{p}$-full simplices is a simplicial set verifying the Kan condition, see
 \cite[Page 946]{Gajer1}, \cite[Proposition 2.3]{CST9}.
 We  denote it by $\crG_{\ov{p}}(X,\cS)$ (or $\crG_{\ov{p}}X$ if there is no ambiguity)
 and call it the \emph{Gajer $\ov{p}$-space} associated to $(X,\cS)$.

\begin{proposition}[{\cite[Proposition 3.4]{CST9}}]\label{prop:homotopyxR}
 Let $(X,\cS, \ov{p})$ be a  perverse space 
  and $Y$ be a  topological space. 
 The product $Y\times X$ is equipped with the product filtration $\cI_{Y}\times \cS$ (\exemsref{exam:stratifications})
 and with the perversity, still denoted by $\ov{p}$,
 defined by $\ov{p}(Y\times S)=\ov{p}(S)$.
 Then, the canonical projections, $p_{Y}\colon Y\times X\to Y$ and $p_{X}\colon Y\times X\to X$,
 induce an isomorphism 
 $$\crG_{\ov{p}}(Y\times X)\cong \Sing \,Y\times \crG_{\ov{p}}X,$$
 where $\Sing \,Y$ is the simplicial set made up of the singular simplices of $Y$.
\end{proposition}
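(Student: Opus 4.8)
The plan is to reduce everything to the classical simplicial identity $\Sing(Y\times X)\cong \Sing Y\times \Sing X$ and then to check that the $\ov p$-fullness condition constrains only the $X$-component of a simplex. First I would recall that a singular simplex $\sigma\colon\Delta\to Y\times X$ is the same datum as the pair $(\sigma_Y,\sigma_X):=(p_Y\circ\sigma,\, p_X\circ\sigma)$, and that this correspondence is a simplicial isomorphism $\Sing(Y\times X)\cong \Sing Y\times \Sing X$ compatible with faces and degeneracies. It then suffices to show that, under this bijection, the subset $\crG_{\ov p}(Y\times X)$ of $\ov p$-full simplices corresponds exactly to $\Sing Y\times \crG_{\ov p}X$; the isomorphism of the proposition is then obtained simply by restriction.

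Next I would describe the strata of the product filtration. By \exemsref{exam:stratifications}, the strata of $Y\times X$ are the products $C\times S$ with $C$ a connected component of $Y$ and $S\in\cS$. Since $(Y\times X)_i=Y\times X_i$ and the formal dimension of $Y\times X$ equals $n=\dim X$, a stratum $C\times S$ with $S\subset X^i$ satisfies $\dim(C\times S)=i=\dim S$, hence $\codim(C\times S)=\codim S$. In particular $C\times S$ is singular if and only if $S$ is singular, and $\ov t(C\times S)=\ov t(S)$, so that $D\ov p(C\times S)=\ov t(S)-\ov p(S)=D\ov p(S)$ by the definition of the product perversity $\ov p(C\times S)=\ov p(S)$.

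The key elementary observation is the following. Fix $\sigma\colon\Delta\to Y\times X$; since $\Delta$ is connected, $\sigma_Y(\Delta)$ lies in a single connected component $C_0$ of $Y$. Then, for every stratum $S\in\cS$, one has $\sigma^{-1}(C_0\times S)=\sigma_X^{-1}(S)$ as subsets of $\Delta$, while $\sigma^{-1}(C\times S)=\emptyset$ for $C\neq C_0$. Consequently the allowability inequality \eqref{allow} for $\sigma$ at the singular stratum $C_0\times S$ is literally the inequality \eqref{allow} for $\sigma_X$ at $S$ (the polyhedral dimensions agree because the two sets coincide), and it is vacuously satisfied at the remaining strata $C\times S$, $C\neq C_0$. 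Hence $\sigma$ is $\ov p$-allowable if and only if $\sigma_X$ is $\ov p$-allowable. Applying this to each face $\tau$ of $\sigma$, whose $X$-component $p_X\circ\tau$ is the corresponding face of $\sigma_X$, we conclude that $\sigma$ is $\ov p$-full if and only if $\sigma_X$ is $\ov p$-full, with no condition at all on $\sigma_Y$. This is exactly the statement that the bijection $\Sing(Y\times X)\cong \Sing Y\times \Sing X$ restricts to a simplicial isomorphism $\crG_{\ov p}(Y\times X)\cong \Sing Y\times \crG_{\ov p}X$.

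The argument is essentially bookkeeping, so I do not expect a serious obstacle; the only point requiring a little care is the assertion about the polyhedral dimension, which is immediate once one notes that $\sigma^{-1}(C_0\times S)$ and $\sigma_X^{-1}(S)$ are the same subset of $\Delta$, so that the dimension function assigns them the same value.
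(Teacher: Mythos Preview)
Your argument is correct; the paper itself does not prove this proposition but merely cites it from \cite[Proposition 3.4]{CST9}, so there is no in-paper proof to compare against. Your approach---reducing to the classical isomorphism $\Sing(Y\times X)\cong \Sing Y\times \Sing X$ and then checking that the allowability condition depends only on the $X$-component via the identity $\sigma^{-1}(C_0\times S)=\sigma_X^{-1}(S)$---is the natural one and matches what one would expect the cited proof to contain.
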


Let $f\colon (X,\cS,\ov{p})\to (Y,\cT,\ov q)$ be a stratified map between perverse filtered spaces such that
$f^\star D\ov q\leq D\ov p$. Then $f$ induces a simplicial map
$\crG (f)\colon\crG_{\ov p}(X,\cS)\to\crG_{\ov q}(Y,\cT)$, see \cite[Proposition 3.5]{CST9}.
In particular, if $(X,\cS,\cT)$ is a CS-coarsening and $\ov p$ a perversity on $(X,\cS)$, we have
$\iota^\star D\iota_{\star}\ov{p}\leq D\ov p$, by \lemref{LemTec}.ii). So the identity map
induces a simplicial map
\begin{equation}\label{equa:gajermap}
\crG (\iota)\colon \crG_{\ov p}(X,\cS)\to \crG_{\iota_{\star}\ov p}(X,\cT).
\end{equation}

A \emph{pointed perverse space} is a quadruplet $(X,\cS,\ov{p},x_{0})$ 
where $(X,\cS,\ov{p})$ is a perverse space and 
$x_{0}\in X\menos \Sigma$ is a regular point, called  \emph{basepoint}.
We also denote by $x_{0}$ the simplicial subset of $\Sing\, X$ generated by $x_{0}\colon \Delta^0\to X$.

\begin{definition} [{\cite{Gajer1}, \cite[Definition 4.1]{CST9}}]\label{def:intersectionhomotopygp}
Let $(X,\cS,\ov{p},x_{0})$ be a pointed perverse space.
The \emph{$\ov{p}$-intersection homotopy  groups}
are the homotopy groups of the simplicial set $\crG_{\ov{p}}(X,\cS)$,
$$\pi^{\ov{p}}_{\ell}(X,\cS,x_{0}):=\pi_{\ell}(\crG_{\ov{p}}(X,\cS),x_{0}).$$
\end{definition}

If there is no ambiguity, we do not mention the basepoint and write $\pi_{\ell}^{\ov p}(X,\cS)$.
Intersection homotopy groups of cones satisfy a property similar to that of intersection homology 
(\cite[Proposition 4.5]{CST9}): suppose $X$ compact and let $\rc X$ be the open cone of apex $\tv$, 
with the cone filtration and pointed by $y_{0}=[x_{0},1/2]$.
Let $\ov{p}$ be a   perversity on $\rc X$,  
we denote also by $\ov{p}$ the perversity induced on $X$.
Then, the $\ov{p}$-intersection homotopy groups of $\rc X$ are given by 
\begin{equation}\label{equa:cone}
\pi_{\ell}^{\ov{p}}(\rc X,\rc\cS,y_{0})=\left\{
\begin{array}{lcl}
\pi_{\ell}^{\ov{p}}(X,\cS,x_{0})&\text{ if }& \ell\leq D\ov{p}(\tv),\\ 
0&\text{ if }& \ell> D\ov{p}(\tv). 
\end{array}
\right.
\end{equation}

The homology of $\crG_{\ov p}(\rc X,\rc S)$ has a different behavior (\cite[Proposition 3.10]{CST9}).
Recall that a local coefficient system (of abelian groups) on a simplicial set $K$ is a contravariant functor
$\gE\colon \Pi K\to \mathrm{Ab}$, from its fundamental groupoid,  with values in the category of abelian groups.
The inclusion $L\hookrightarrow \rc L$ given by  $(x\mapsto [x,1/2])$, induces 
\begin{equation}\label{equa:homologyGajercone}
\left\{\begin{array}{lrccl}
\hbox{isomorphisms} &
H_{\ell}(\crG_{\ov{p}}L;\gE) & \xrightarrow{\cong} &
H_{\ell}(\crG_{\ov{p}} \rc L;\gE ), &  \text{if}\quad \ell\leq D\ov{p}(\tv) \hbox{ and }\\
\text{an epimorphism} &
H_{\ell}(\crG_{\ov{p}}L;\gE) & \twoheadrightarrow &
H_{\ell}(\crG_{\ov{p}} \rc L;\gE), &  \text{if}\quad \ell = D\ov{p}(\tv) +1 .
\end{array}\right.
\end{equation}
In other words, 
the classical \emph{relative} homology verifies
\begin{equation}\label{equa:conegajer}
H_{j}(\crG_{\ov{p}}\rc L, \crG_{\ov{p}} L ;\gE ) =
0\quad\text{for any}\quad j\leq D\ov{p}(\tv)+1.
\end{equation}
%

 \section{Connectivity and CS-coarsenings}\label{sec:connected}
 \begin{quote}
 In this section, we study the set $\pi_{0}^{\ov{p}}(X)$ of  connected components of the Gajer space $\crG_{\ov p}X$
 associated to a perverse space $(X,\cS,\ov{p})$.
The goal is to demonstrate, in \propref{lemapi1}, the existence of an isomorphism 
between the sets of connected components
of the two stratifications in a CS-coarsening. This is the first condition to fill in 
for an application of  \lemref{lem:quillenfini}.
 \end{quote}
 
 \begin{lemma}\label{lem:pi1surj}
 Let $(X,\cS,\ov{p},x_{0})$ be a pointed perverse space with  $\ov{p}\leq \ov t$. Then, the canonical inclusion induces an isomorphism of sets  and a surjective homomorphism,
\begin{equation}\label{equa:pi0pi1surj}
\pi_{0}(X\menos \Sigma)\xrightarrow{\cong} \pi_{0}^{\ov p}(X,\cS) \quad \text{and} \quad
\pi_{1}(X\menos \Sigma,x_{0})\twoheadrightarrow \pi_{1}^{\ov p}(X,\cS,x_{0}).
\end{equation}
\end{lemma}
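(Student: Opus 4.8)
The statement asserts two things about the map induced by the inclusion $X\menos\Sigma\hookrightarrow (X,\cS)$: that on $\pi_0$ it is a bijection, and that on $\pi_1$ (based at a regular point $x_0$) it is surjective. The plan is to work directly with the Gajer simplicial set $\crG_{\ov p}(X,\cS)$, exploiting that $\ov p\leq\ov t$ forces every simplex of $\crG_{\ov p}$ to meet the singular set in a set of low polyhedral dimension. First I would record the key numerical consequence of $\ov p\leq\ov t$: for a singular stratum $S$ one has $D\ov p(S)=\ov t(S)-\ov p(S)\geq 0$, so the allowability condition \eqref{allow} reads $\dim\sigma^{-1}(S)\leq\dim\Delta-2$; hence a $\ov p$-full $0$-simplex cannot touch $\Sigma$ at all (it must land in $X\menos\Sigma$), and a $\ov p$-full $1$-simplex $\sigma\colon\Delta^1\to X$ has $\dim\sigma^{-1}(S)\leq -1$, i.e. again $\sigma(\Delta^1)\subset X\menos\Sigma$. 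This is exactly the observation one needs.

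For the $\pi_0$ statement: vertices of $\crG_{\ov p}(X,\cS)$ are precisely the points of $X\menos\Sigma$, by the remark above. Two such points are in the same component of $\crG_{\ov p}(X,\cS)$ iff they are joined by an edge-path of $\ov p$-full $1$-simplices; since each such $1$-simplex has image in $X\menos\Sigma$, it is in particular a genuine path in $X\menos\Sigma$, so the components of $\crG_{\ov p}(X,\cS)$ inject into $\pi_0(X\menos\Sigma)$. Conversely a path in $X\menos\Sigma$ between two regular points can be subdivided and each piece realised as a singular $1$-simplex missing $\Sigma$ entirely, hence $\ov p$-full (all its faces, being vertices in $X\menos\Sigma$, are allowable); this gives surjectivity, so $\pi_0(X\menos\Sigma)\xrightarrow{\cong}\pi_0^{\ov p}(X,\cS)$. (Alternatively one can phrase this via the simplicial map $\Sing(X\menos\Sigma)\to\crG_{\ov p}(X,\cS)$ and Kan-ness, but the direct argument is cleaner.)

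For the $\pi_1$ statement: the inclusion induces a simplicial map $\Sing(X\menos\Sigma)\to\crG_{\ov p}(X,\cS)$ sending $x_0$ to $x_0$, and both simplicial sets are Kan, so $\pi_1$ is computed by based simplicial loops modulo based simplicial homotopy. A based loop in $\crG_{\ov p}(X,\cS)$ is represented, after possibly composing with a homotopy, by a concatenation of $\ov p$-full $1$-simplices based at $x_0$; by the observation each of these $1$-simplices has image in $X\menos\Sigma$, so the whole representing loop already lies in $\Sing(X\menos\Sigma)$, which gives surjectivity on $\pi_1$ immediately. More carefully, one should check that a general element of $\pi_1^{\ov p}$ is represented by such an edge-loop: use the Kan property of $\crG_{\ov p}(X,\cS)$ and the standard fact that $\pi_1$ of a Kan complex is generated by edge-loops, together with the fact that every edge of $\crG_{\ov p}(X,\cS)$ factors through $\Sing(X\menos\Sigma)$.

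The only real subtlety — the step I expect to be the main obstacle — is the passage from "$\sigma(\Delta^1)\subset X\menos\Sigma$ for each $1$-simplex" to a genuine statement about $\pi_1$, i.e. controlling \emph{$\ov p$-full homotopies} between such edge-loops: a $2$-simplex realising a homotopy might a priori meet $\Sigma$. One resolves this by noting that we do not need to lift homotopies from $\crG_{\ov p}$ to $\Sing(X\menos\Sigma)$ for surjectivity — surjectivity on $\pi_1$ only requires that every class in the target has a representative coming from the source, which the edge-loop argument supplies. (Injectivity would fail in general, consistent with the map being merely an epimorphism.) Thus the proof is: (1) $\ov p\leq\ov t\Rightarrow$ every $\ov p$-full simplex of dimension $\leq 1$ avoids $\Sigma$; (2) deduce the $\pi_0$-bijection by comparing path-components; (3) deduce $\pi_1$-surjectivity by representing classes via edge-loops, all of whose edges avoid $\Sigma$.
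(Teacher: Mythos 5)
Your proposal is correct and follows exactly the paper's (one-sentence) argument: the hypothesis $\ov p\leq\ov t$ gives $D\ov p(S)\geq 0$, so the allowability condition forces every $\ov p$-full simplex of dimension $\leq 1$ to avoid $\Sigma$, identifying the $1$-skeleton of $\crG_{\ov p}(X,\cS)$ with that of $\Sing(X\menos\Sigma)$ and yielding the $\pi_0$-bijection and $\pi_1$-surjection. Your elaboration of why only surjectivity (not injectivity) follows on $\pi_1$ is the right reading of the statement.
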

\begin{proof}

This comes directly from the definition of  $\ov{p}$-full simplices: 
in dimension 0 or 1 they do not meet the singular part of $X$.
\end{proof}
 A perverse space is said \emph{$\ov{p}$-connected} if $\pi_{0}^{\ov p}(X,\cS)$
 consists of a single element.
When $(X,\cS,\ov p)$ is $\ov p$-connected, we do not need to specify the basepoint  
of intersection homotopy groups
 and   we  write $\pi^{\ov p}_\ell (X,\cS)$ instead of $\pi^{\ov{p}}_{\ell}(X,\cS,x_{0})$.

 \begin{lemma}\label{lem:cheminreg}
Let $(X,\cS)$ be a  CS set.  For any point $x \in X$, there exists a   path 
$\beta \colon [0,1] \to X$ with $ \beta(0) =x$ and $\beta(]0,1]) \subset X\menos \Sigma$. 
This path can be chosen as small as wanted.
Moreover, if $x$ is $\ov{p}$-allowable for some perversity $\ov p$, then $\beta$ is $\ov{p}$-allowable too.
\end{lemma}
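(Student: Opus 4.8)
The plan is to read the conclusion directly off the local conical structure of a CS set, with no induction on depth. I would first dispose of the case where $x$ is a regular point: then $x$ belongs to an $n$-dimensional stratum, which is a topological manifold, and the constant path $\beta\equiv x$ (or any small path inside that stratum) does the job --- its image lies in $X\menos\Sigma$, it is as small as wanted, and since it meets no singular stratum it is $\ov p$-allowable for every $\ov p$, while $x$ itself is automatically $\ov p$-allowable as a $0$-simplex. This is the only place where the restriction ``$i\neq n$'' in \defref{def:csset} forces a (trivial) separate treatment.

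Now suppose $x$ lies in a singular stratum $S$. I would choose a conical chart at $x$, i.e.\ a stratified homeomorphism $\varphi\colon(U\times\rc L,\cI\times\rc\cL)\to(V,\cS)$ with $\varphi(x,\tv)=x$. Since the link $L$ is non-empty and its regular part $L\menos\Sigma_L$ is dense in it (a recalled basic property of CS sets), I can fix a regular point $z_0\in L\menos\Sigma_L$ and set $\beta_c(t)=\varphi\big(x,[z_0,ct]\big)$ for $t\in[0,1]$, where $c\in\,]0,1[$. Then $\beta_c(0)=\varphi(x,\tv)=x$. For $t>0$, the point $[z_0,ct]$ is regular in $\rc L$ --- in the cone filtration a point $[z,s]$ with $s>0$ is singular exactly when $z\in\Sigma_L$, and $z_0$ is not --- so $(x,[z_0,ct])$ is regular in $U\times\rc L$; as a stratified homeomorphism sends each codimension-$0$ stratum to a codimension-$0$ stratum, $\varphi$ carries the regular locus onto the regular locus, whence $\beta_c(t)\notin\Sigma$. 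Thus $\beta_c$ is an escaping path.

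It then remains to arrange smallness and allowability. For smallness I would use that $s\mapsto\varphi(x,[z_0,s])$ is continuous on $[0,1[$ with value $x$ at $s=0$: given any neighbourhood $W$ of $x$ there is $c\in\,]0,1[$ with $\varphi(x,[z_0,s])\in W$ for all $s\in[0,c]$, and then $\beta_c([0,1])\subseteq W$. For allowability, assume $x$ is $\ov p$-allowable. If $x$ lies in the singular stratum $S$, the inequality \eqref{allow} in dimension $0$ reads $0\le-D\ov p(S)-2$, i.e.\ $D\ov p(S)\le-2$. For the $1$-simplex $\beta:=\beta_c$ one has $\beta^{-1}(S)=\{0\}$ and $\beta^{-1}(S')=\emptyset$ for every other singular stratum $S'$, because $\beta(0)=x\in S$ while $\beta(]0,1])\subseteq X\menos\Sigma$; hence $\dim\beta^{-1}(S)=0\le 1-D\ov p(S)-2$ (true since $D\ov p(S)\le-2$) and $\dim\beta^{-1}(S')=-\infty$ otherwise, so $\beta$ satisfies \eqref{allow}. (When $x$ is regular, $\beta$ meets no singular stratum, so the condition is vacuous.) I expect no genuine obstacle; the two points to state with care are that a stratified homeomorphism preserves the regular locus, and that the link of a stratum, being non-empty with dense regular part, supplies the regular point $z_0$ from which to launch the path.
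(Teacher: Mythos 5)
Your proposal is correct and follows essentially the same route as the paper's proof: constant path in the regular case, and in the singular case a conical chart $\varphi$ with a regular point $z$ of the link giving $\beta(t)=\varphi(x,[z,\varepsilon t])$, with smallness obtained by shrinking the cone parameter and allowability checked by the same dimension count on $\beta^{-1}(S)$. No gaps.
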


\begin{proof}
If the point $x$ is regular, we choose the constant path $x$. 
If not, let   $\varphi \colon \R^k \times \rc L \to U \subset X$ be a conical chart  with $\varphi(0,\tv)=x$. 
(Here, $\tv$ is the apex of the cone $\rc L$.) 
We choose a point $z \in L\menos \Sigma_L$, which exists since the regular part of $L$ is 
not empty.
For any $\varepsilon >0$, the map $\beta(t) = \varphi (0, [z,\varepsilon \, t])$
checks the required conditions.
Let $S$ be the stratum of $\cS$ containing $x$, the last assertion is a consequence of
$$
\dim \beta^{-1}(S) = \dim \{x\} = 0 \leq \dim \{x\} -D\ov p(S) -2 = \dim \beta^{-1}(S) -D\ov p(S) -2.
$$
\end{proof}

The next result is demonstrated in  \cite[Lemma 2.6.3]{LibroGreg}  for pseudomanifolds.
 For these, a link is also a pseudomanifold  and the proof can be done by induction on the dimension. 
 Since CS sets do not have such a recursive property, we provide an adaptation of it.

\begin{lemma}\label{lem:nor}
Let $(X,\cS)$ be a normal CS set. 
Then the space $X$  is connected if and only if the regular part $X\menos \Sigma$ is.
\end{lemma}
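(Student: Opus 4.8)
The plan is to prove the two implications separately. If $X\menos\Sigma$ is connected, then $X$ is connected as well: the regular part $X\menos\Sigma$ is dense in $X$ (links are non‑empty), and the closure of a connected subspace is connected. For the converse I would argue by induction on $\depth X$. The base case $\depth X=0$ is immediate: a singular stratum always lies in the closure of a regular stratum, so a CS set of depth $0$ has no singular stratum, hence $\Sigma=\emptyset$ and $X\menos\Sigma=X$.

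So assume the equivalence holds for every normal CS set of depth $<d$, and let $X$ be a normal connected CS set with $\depth X=d$. The regular part $X\menos\Sigma$ is the disjoint union of the $n$‑dimensional strata of $X$, hence an open $n$‑manifold; in particular it is locally connected, so each of its connected components is open in $X$ and closed in $X\menos\Sigma$. Fix one such component $C$, and write $\overline C$ for its closure in $X$. It suffices to show that $\overline C$ is open: then $\overline C$ is a non‑empty clopen subset of the connected space $X$, so $\overline C=X$, and since the components of $X\menos\Sigma$ are open and pairwise disjoint we have $\overline C\cap(X\menos\Sigma)=C$, whence $X\menos\Sigma=C$ is connected.

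To see that $\overline C$ is open, note first that any point of $\overline C$ lying in $X\menos\Sigma$ already lies in $C$ (the components of $X\menos\Sigma$ being open and disjoint), and $C$ is open; so only a point $x\in\overline C\cap\Sigma$ needs attention. Choose a conical chart $\varphi\colon U_{0}\times\rc L\to U$ of $x$, with link $L$ and apex $\tv$ (so $U$ is an open neighbourhood of $x$ and $x$ corresponds to a cone apex under $\varphi$), and set $k=\dim U_{0}$. The image $V=\varphi(U_{0}\times(\rc L\menos\{\tv\}))$ is an open subset of $X$ (it is the image of the complement of the closed subset $U_{0}\times\{\tv\}$), and through $\varphi$ it is a CS set stratified‑homeomorphic to $\R^{k}\times\,]0,1[\,\times L$. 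Here $\depth V=\depth L\leq \depth X-1<d$; the space $V$ is normal, since the links of its strata are links of strata of $X$ and hence connected; and $V$ is connected, since $L$ is connected by normality of $X$. Moreover the regular part of $V$ is $\R^{k}\times\,]0,1[\,\times(L\menos\Sigma_{L})$, which under $\varphi$ is exactly $U\menos\Sigma$. The induction hypothesis applied to $V$ therefore gives that $U\menos\Sigma$ is connected. Since $x\in\overline C$, the set $U\menos\Sigma$ meets $C$; being connected and contained in $X\menos\Sigma$, it is contained in $C$. Finally $U\menos\Sigma$ is dense in $U$, so $x\in U\subseteq\overline{U\menos\Sigma}\subseteq\overline C$, which shows that $\overline C$ is a neighbourhood of $x$.

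I expect the only delicate point to be the verification of the structural claims about the chart: that $V=\varphi(U_{0}\times(\rc L\menos\{\tv\}))$ is again a normal connected CS set of strictly smaller depth, and that its regular part coincides with $U\menos\Sigma$. The inequality $\depth L\leq\depth X-1$ comes from prepending the apex stratum to a chain of strata of $L$ to produce a strictly longer chain in $U$; normality of $V$ and the identification of its regular part are local and are read off directly from the cone filtration. Everything else is point‑set topology together with the recorded basic properties of CS sets (density and manifold structure of the regular part, and the poset structure of the strata).
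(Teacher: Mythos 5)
Your proof is correct and follows essentially the same route as the paper's: induction on the depth, with the inductive step carried out in a conical chart by applying the hypothesis to the normal connected CS set $U_{0}\times\,]0,1[\,\times L$ of strictly smaller depth, normality of $X$ supplying the connectedness of $L$. You package the conclusion as a direct clopen argument for $\overline{C}$ where the paper argues by contradiction using two regular strata, but the substance is identical.
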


\begin{proof}
Since the regular part is dense in $X$, we have only  one implication to prove.
So let $(X,\cS)$ be a normal, connected CS set. 
We proceed by induction on the depth.
When $\depth(X,\cS)=0$, we have  $\Sigma=\emptyset$, and the result is clear.
Suppose the result is true for any CS set of depth $<n$ and  that 
$(X,\cS)$ is a normal, connected CS set, of depth $n$,  with more than one regular stratum.
We denote by $S$ one regular stratum and by $T$ the union of the other regular strata.
We know $X=\ov S\cup \ov T$ and the existence of\  $x\in \ov S\cap \ov T$ since $X$ is connected.
Let $\varphi\colon\R^i\times \rc L\to U$ be a conical chart of $x$. The intersection of $S$ and $T$
with $U$ are of the form
$\R^i\times A\times ]0,1[$ and $\R^i\times B\times ]0,1[$,
where $A$ and $B$ are union of regular strata of $L$.
Since $L$ is a filtered space, $A$ and $B$ are open in $L$ and the CS set 
$\R^i\times L\times ]0,1[$
is a CS set of depth $<n$ with more than one regular stratum.
We get the result by contradiction.
\end{proof}

\begin{proposition}\label{lemapi1}
Let $\ov{p}$ be a $K$-perversity on a CS-coarsening  $(X,\cS,\cT)$, such that $\ov p \leq \ov t$.
Then, the identity map induces a bijection
$$
\iota_0 \colon \pi_0^{\ov p}(X,\cS)\xrightarrow{\cong} \pi_0^{\iota_\star \ov p}(X,\cT).
$$
\end{proposition}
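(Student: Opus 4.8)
The plan is to combine \lemref{lem:pi1surj}, which identifies $\pi_0^{\ov p}$ with $\pi_0$ of the regular part, with a connectivity argument relating the regular parts of $\cS$ and $\cT$. First I would invoke \lemref{lem:pi1surj} (applicable since $\ov p\leq \ov t$ and, by \lemref{LemTec}.iv), $\iota_{\star}\ov p\leq \ov t$ as well) to obtain bijections $\pi_0(X\menos\Sigma_{\cS})\xrightarrow{\cong}\pi_0^{\ov p}(X,\cS)$ and $\pi_0(X\menos\Sigma_{\cT})\xrightarrow{\cong}\pi_0^{\iota_{\star}\ov p}(X,\cT)$, both induced by the canonical inclusions, and compatible with the identity map $\iota$. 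So it suffices to show that the inclusion $X\menos\Sigma_{\cS}\hookrightarrow X\menos\Sigma_{\cT}$ induces a bijection on $\pi_0$.

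The key point is that $\Sigma_{\cS}\supseteq\Sigma_{\cT}$, so $X\menos\Sigma_{\cS}\subseteq X\menos\Sigma_{\cT}$, and the difference $\Sigma_{\cS}\menos\Sigma_{\cT}$ consists precisely of the exceptional strata of $(X,\cS,\cT)$ (the singular strata of $\cS$ living in a regular stratum of $\cT$). Injectivity of the induced map on $\pi_0$: if two points of $X\menos\Sigma_{\cS}$ are joined by a path in $X\menos\Sigma_{\cT}$, I would push this path off the exceptional strata. Each exceptional stratum $S$ has, by \eqref{equa:exceptional}, a conical chart in $(X,\cS)$ with link $(\S^{b-1},\cI)$ and $b\geq 1$; since a $K$-perversity forbids $1$-exceptional strata (\remref{rem:propertyKperversity}.ii), in fact $b\geq 2$, so $S$ has codimension $\geq 2$ \emph{inside} the regular stratum $S^\iota$ of $\cT$, which is a manifold. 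Thus $S$ is locally of codimension $\geq 2$ in the ambient manifold piece $S^\iota$, and a path in the connected manifold $S^\iota$ avoiding finitely many such strata (local finiteness) can be perturbed, rel endpoints, to miss them entirely — a standard transversality/general-position argument in a manifold. Doing this stratum by stratum over the (finite, by local finiteness along a compact path) set of exceptional strata met by the path yields a path in $X\menos\Sigma_{\cS}$, proving injectivity.

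Surjectivity is the step that needs the most care: given a point $x\in X\menos\Sigma_{\cT}$, I must connect it by a path in $X\menos\Sigma_{\cT}$ to a point of $X\menos\Sigma_{\cS}$. If $x$ already lies in $X\menos\Sigma_{\cS}$ there is nothing to do; otherwise $x$ belongs to some exceptional stratum $S$, hence to the \emph{regular} stratum $S^\iota$ of $\cT$, which is a connected manifold containing $x$, and the source strata of $S^\iota$ are an open dense subset of $S^\iota$ (as noted after \defref{def:source&virtual}) consisting of regular strata of $\cS$; any nearby source point gives a point of $X\menos\Sigma_{\cS}$ joined to $x$ by a short path within the manifold $S^\iota\subset X\menos\Sigma_{\cT}$. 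Alternatively one may apply \lemref{lem:cheminreg} to the CS set $(X,\cT)$ at $x$ to produce a small path leaving $x$ into $X\menos\Sigma_{\cT}$; since the regular strata of $\cS$ are dense in each regular stratum of $\cT$, the path can be taken to reach $X\menos\Sigma_{\cS}$. Combining injectivity and surjectivity gives that $X\menos\Sigma_{\cS}\hookrightarrow X\menos\Sigma_{\cT}$ is a $\pi_0$-bijection, and hence $\iota_0$ is a bijection.

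**Main obstacle.** The delicate point is the push-off argument for injectivity: one must know that the exceptional strata are codimension $\geq 2$ in the manifold pieces of $\cT$ in which they sit, which is exactly where the hypothesis ``$\ov p$ is a $K$-perversity'' enters (it rules out $1$-exceptional strata), and one must handle the general-position perturbation carefully — globally the union of exceptional strata need not be a nice subcomplex of $X$, so the perturbation has to be performed locally, chart by chart along the compact image of the path, using local finiteness of the stratification. I expect this to be the technical heart of the proof, the rest being a routine assembly of the cited lemmas.
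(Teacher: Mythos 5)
Your reduction is exactly the paper's: both proofs use \lemref{lem:pi1surj} (together with $\iota_{\star}\ov p\leq \ov t$, via \lemref{LemTec}) to reduce the statement to the bijectivity of $\pi_0(X\menos \Sigma_{\cS})\to \pi_0(X\menos \Sigma_{\cT})$, and both exploit that a $K$-perversity rules out $1$-exceptional strata. Where you diverge is the execution of the final connectivity step. The paper restricts to the open set $Y=X\menos\Sigma_{\cT}$ (a manifold, with the induced CS structure $(Y,\cS)$ whose singular strata are exactly the exceptional strata), observes that its links are homology spheres of dimension $\geq 1$, hence connected, so $(Y,\cS)$ is normal, and then \lemref{lem:nor} gives $\pi_0(Y\menos\Sigma_{\cS})\cong\pi_0(Y)$ in one stroke. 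You instead re-derive this by a hands-on path-pushing argument. That is workable, but with two caveats. First, there is no transversality or general position in the topological category, and the exceptional strata need not sit locally flatly in $Y$; the correct local justification for the push-off is that in a conical chart $\R^a\times\rc\,\S^{b-1}$ the complement of $\R^a\times\{\tv\}$ is path-connected because $b\geq 2$ --- i.e., precisely the connectivity of the links, not codimension alone. Second, when exceptional strata are nested ($S_0\prec S_1$), the link of the deeper stratum is itself a nontrivially stratified space and the push-off must be organized by induction on depth, which is exactly what the proof of \lemref{lem:nor} does. In short, your injectivity argument amounts to re-proving \lemref{lem:nor}; citing that lemma (after checking normality of $(Y,\cS)$, as the paper does) makes the ``technical heart'' you identify disappear. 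Your surjectivity step, via density of the $\cS$-regular part or \lemref{lem:cheminreg}, is fine and matches the paper.
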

\begin{proof}
From \eqref{equa:gajermap}, we know the existence of a simplicial map
$$\crG(\iota) \colon \crG_{\ov{p}}(X,\cS)\to \crG_{\iota_\star \ov p}(X,\cT).$$
We denote the induced maps by 
$\iota_\ell \colon \pi_\ell^{\ov p}(X,\cS) \to  \pi_\ell^{\iota_\star \ov p}(X,\cT)
$.
By hypothesis, we have $D\ov{p}\geq \ov{0}$.
From $\iota_{\star}D\ov p=D\iota_{\star}\ov p$ (\lemref{LemTec}), we deduce $D\iota_{\star}\ov{p}\geq \ov{0}$.
So, the allowability condition \eqref{allow} of an  $\ell$-simplex, with $\ell=0,\,1$, gives 
$$\dim\sigma^{-1}(S)\leq \dim\sigma-D\ov p(S)-2<0.$$
Thus the $\ov p$-full $\ell$-simplices of $(X,\cS)$ and $(X,\cT)$
do not meet the respective singular parts for $\ell=0,\,1$.
So, with \lemref{lem:pi1surj}, the desired property becomes 
\begin{equation}\label{nada}
\iota_0 \colon \pi_0(X\menos \Sigma_\SS) \longrightarrow \pi_0 (X\menos \Sigma_\TT) \hbox{ is a bijection.}
\end{equation}
As $Y = X\menos \Sigma_\cT$ is an open subset, there is an induced CS set structure  $(Y,\cS)$. 
The  condition \eqref{nada}  becomes 
$$
\iota_0 \colon  \pi_0(Y \menos \Sigma_\cS) \longrightarrow  \pi_0(Y)  \hbox{ is a bijection.}
$$
 Without loss of generality, we may assume that $Y$ is connected. 
 All  strata of $(Y,\cS)$ are exceptional in $(X,\cS,\cT)$ and none are of  codimension 1 (cf. \eqref{eq:regS}). 
  From \defref{def:csset} and  diagram \eqref{equa:exceptional}, we deduce that 
 any link of any exceptional stratum is a homology sphere of dimension $\geq 1$.
 Hence, the CS set $(Y,\cS)$ is normal.
 Finally, \lemref{lem:nor}  implies that $Y\menos \Sigma_\cS$ is connected and 
 therefore  property  \eqref{nada} is true.
 \end{proof}
 
 \begin{proposition}\label{prop:connectionLink}
 Let $(X,\cS,\ov p)$ be a connected, normal perverse CS set.
  Then,  any link $(L,\cL)$ is $\ov p$-connected; i.e., $\pi_{0}^{\ov p}(L,\cL)$ consists of a single element.
 \end{proposition}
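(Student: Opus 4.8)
The plan is to reduce the statement about $\ov p$-connectivity of the link to the classical connectivity statement already available, namely \lemref{lem:nor}, combined with the first half of \lemref{lem:pi1surj}. First I observe that, as recalled after \defref{def:intersectionhomotopygp}, a perversity $\ov p$ on the CS set $X$ induces in a standard way a perversity on any link $(L,\cL)$ (see \remref{rem:perversityonlink}, point 1): if $\varphi\colon \R^k\times\rc L\to V$ is a conical chart of a point in the singular stratum $S$, then for a stratum $T$ of $L$ we put $\ov p(T)=\ov p(\R^k\times]0,1[\times T)$. So the statement ``$\pi_0^{\ov p}(L,\cL)=0$'' makes sense.

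Next I would check that $\ov p\leq\ov t$ holds on $(L,\cL)$, so that \lemref{lem:pi1surj} applies to it. This should follow from the dimension bookkeeping: a stratum $T$ of $L$ corresponds to the stratum $\R^k\times]0,1[\times T$ of $V\menos S\subset X$, whose codimension in $X$ equals $\codim_L T$; hence $\ov p(T)=\ov p(\R^k\times]0,1[\times T)\leq\ov t(\R^k\times]0,1[\times T)=\codim_X(\R^k\times]0,1[\times T)-2=\codim_L T-2=\ov t(T)$, using that $\ov p\leq\ov t$ on $X$. With $\ov p\leq\ov t$ on $L$ in hand, \lemref{lem:pi1surj} gives a bijection $\pi_0(L\menos\Sigma_L)\xrightarrow{\cong}\pi_0^{\ov p}(L,\cL)$. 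Therefore it suffices to show that $L\menos\Sigma_L$ is connected.

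Now I invoke the two structural hypotheses: $X$ is normal, so by \defref{def:csset} each of its links is connected, in particular $L$ is connected; and $L$ is itself a compact filtered space which, being a link in a CS set, is again a CS set (this is part of the local conical structure of CS sets), and moreover $L$ is normal — its links are the links of the corresponding strata of $X$, which are connected because $X$ is normal. Hence $(L,\cL)$ is a normal connected CS set, and \lemref{lem:nor} yields that $L\menos\Sigma_L$ is connected. Combining with the previous paragraph, $\pi_0^{\ov p}(L,\cL)=0$, as claimed.

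The main obstacle I anticipate is not the logical skeleton above but the verification that a link $(L,\cL)$ of a normal CS set is itself a \emph{normal} CS set: one must argue carefully that the links appearing inside $L$ are (up to the conical chart identifications) links of strata of $X$, hence connected. The other minor point to be careful about is whether ``$X$ connected'' is actually needed for this particular statement — it is not used in the argument for a single link $L$, so I would simply keep it as part of the standing hypotheses of the section without appealing to it here. Everything else is routine once the perversity on $L$ is set up as in \remref{rem:perversityonlink}.
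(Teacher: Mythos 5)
There are two genuine gaps. The more serious one is your assertion that $L$, ``being a link in a CS set, is again a CS set,'' and moreover a normal one. This is exactly what fails for CS sets: \defref{def:csset} only requires a link to be a compact \emph{filtered} space, and the paper explicitly notes, just before \lemref{lem:nor}, that CS sets lack the recursive property enjoyed by pseudomanifolds. The paper's workaround, which is the key idea you are missing, is to never work with $L$ itself but with $\R^m\times L\times ]0,1[$, the complement of the cone locus in the image of a conical chart: this is an \emph{open subset of $X$}, hence automatically a CS set, and its links are links of $X$, hence connected by normality of $X$. Applying \lemref{lem:nor} to this open subset gives connectivity of $\R^m\times (L\menos \Sigma_{\cL})\times ]0,1[$, hence of $L\menos\Sigma_{\cL}$, and \propref{prop:homotopyxR} transfers the conclusion back to $(L,\cL)$. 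As written, your argument rests on an unproved (and in general false) claim about links of CS sets.

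The second gap is your appeal to \lemref{lem:pi1surj}, which requires $\ov p\leq\ov t$. The proposition assumes no such bound: $(X,\cS,\ov p)$ is an arbitrary perverse CS set, and when $D\ov p(S)<0$ a $\ov p$-full $0$-simplex may lie in the singular stratum $S$, so the bijection $\pi_0(L\menos\Sigma_{\cL})\cong\pi_0^{\ov p}(L,\cL)$ is not available. The paper replaces this step by \lemref{lem:cheminreg}: every $\ov p$-allowable point is joined to a regular point by a $\ov p$-allowable path, and any two regular points of the (connected) regular part are joined by a regular, hence $\ov p$-full, path; this surjectivity onto $\pi_0^{\ov p}$ is all that is needed to conclude. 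Your observation that connectedness of $X$ is not really used is correct, and your use of \propref{prop:homotopyxR} matches the paper's; but as it stands your proof covers only the case $\ov p\leq\ov t$ and presupposes that $L$ is a normal CS set, neither of which you may assume.
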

 
 \begin{proof}
 Let $\varphi\colon \R^m \times \rc L \to U$ be a conical chart. As $\R^m\times L\times ]0,1[$ is an open subset
 of $X$, it is a CS set and any of its links are links of $X$.
 So, $\R^m\times L\times ]0,1[$ is a connected, normal CS set.
 From \lemref{lem:nor}, we deduce that
 $\R^m\times L\times ]0,1[\menos \Sigma=\R^m\times (L\menos \Sigma_{\cL})\times ]0,1[$
 is connected. 
 Any $\ov p$-allowable point of the CS set $\R^m\times L\times ]0,1[$ can be replaced by a regular one 
 (see \lemref{lem:cheminreg}) and any two regular points of $\R^m\times (L\menos \Sigma_{\cL})\times ]0,1[$
 are connected by a regular path. So,
 $\pi_{0}^{\ov p}(\R^m\times L\times ]0,1[,\cI_{\R^m}\times\cL\times \cI_{]0,1[})=\pi_{0}^{\ov p}(L,\cL)$
 consists of a single element, cf. \propref{prop:homotopyxR}.
  \end{proof}

 \section{Intersection fundamental group and CS-coarsenings}\label{sec:pi1coarsening}
 
 \begin{quote}
 We establish the existence of isomorphisms between the fundamental intersection groups of a CS-coarsening 
in two situations. The first  concerns the case without exceptional stratum 
 and is an extension of \cite[Theorem 5.6]{CST9}
 to general perversities. The second takes into account the existence of 
 some specific exceptional strata within Thom-Mather's spaces.
 \end{quote}

\begin{proposition}\label{prop:noexceppi1}
Let $(X,\cS,\cT,x_{0})$ be a pointed CS-coarsening  without exceptional stratum
and $\ov{p}$ be a perversity on $\cS$ with $\ov{p}\leq \ov t$. 
Then there is an isomorphism,
\begin{equation}\label{equa:pi1noexcep}
\pi_{1}^{\ov p}(X,\cS,x_{0})\xrightarrow{\cong}\pi_{1}^{\iota_{\star\ov p}}(X,\cT,x_{0}).
\end{equation}
\end{proposition}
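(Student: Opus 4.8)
The plan is to establish \eqref{equa:pi1noexcep} by a Van Kampen-type gluing argument, reducing the general CS-coarsening to the elementary "simple" coarsenings described in \secref{sec:coarsening} and then handling the fountain and source charts separately. First I would invoke the decomposition of \cite{MR4170473,MR4742271}: any CS-coarsening factors as a finite tower of CS-coarsenings, each of which is simple in the sense of \defref{def:simple}, and by \lemref{DI} the perversities $\ov p$, $\iota_\star\ov p$ remain $K$-perversities (here just ordinary perversities $\le\ov t$, but the bound $\iota_\star\ov p\le\ov t$ persists by \lemref{LemTec}.iv)) at each step, and the "no exceptional stratum" hypothesis is inherited along the tower. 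So it suffices to treat a simple CS-coarsening $(X,\cS,\cT)$ with no exceptional strata; then only source strata and fountain strata occur, with local models \eqref{equa:source} and \eqref{equa:virtual2} respectively.

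Next I would set up the Van Kampen machinery for the Gajer simplicial sets, using the simplicial Van Kampen / Seifert theorem established in \cite{CST9} (the Van Kampen theorem for $\pi_1^{\ov p}$). Cover $X$ by the conical charts around points of the fountain strata together with the open complement $X'$ of the union of the closed fountain strata; on $X'$ the stratifications $\cS$ and $\cT$ agree up to source-type coarsenings only, and one handles those by the same induction or directly because source charts \eqref{equa:source} only coarsen the link, leaving the apex stratum of the same dimension. The crucial computation is therefore the local one on a fountain chart: I must compare $\pi_1^{\ov p}$ of $(\R^a\times(\rc\S^{b-1}\times\rc L),\cI\times(\cI\times\rc\cL)_{(\tu,\tv)})$ with $\pi_1^{\iota_\star\ov p}$ of $((\R^a\times\rc\S^{b-1})\times\rc L,(\cI\times\cI)\times\rc\cL)$. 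By \propref{prop:homotopyxR} the $\R^a$ factor drops out, and by the cone formula \eqref{equa:cone} applied twice (once for each cone) the question reduces to comparing $\pi_1^{\ov p}$ of the two iterated cones, controlled by the values $D\ov p(\{(\tu,\tv)\})$ and $D\iota_\star\ov p(\{\tv\})$ which are related by \eqref{eq:AB} in \remref{rem:perversityonlink}. Since $b\ge 1$ and $\ov p\le\ov t$ give $D\ov p\ge\ov 0$, in degree $1$ the cone formula \eqref{equa:cone} forces both sides to collapse to $\pi_1^{\ov p}$ of the innermost space, or to be trivial, in a matching way — this is where one verifies that the identity map \eqref{equa:gajermap} induces the isomorphism locally.

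The main obstacle I expect is the bookkeeping in the Van Kampen patching: one must check that the pieces of the cover, their pairwise intersections, and the induced maps on $\pi_1^{\ov p}$ fit the hypotheses of the simplicial Van Kampen theorem (in particular that intersections are $\ov p$-connected, which is where \propref{lemapi1} on $\pi_0$ and \propref{prop:connectionLink} on $\ov p$-connectivity of links enter), and that the local isomorphisms on the fountain charts are compatible with the globally-defined map \eqref{equa:gajermap} so that the amalgamated pushout on the $\cS$-side maps isomorphically to the amalgamated pushout on the $\cT$-side. A secondary technical point is the source-stratum case: one must argue that coarsening only the link of a conical chart (diagram \eqref{equa:source}) does not change $\pi_1^{\ov p}$, which again follows from \propref{prop:homotopyxR}, the cone formula \eqref{equa:cone}, and an inductive application of the statement in lower depth to the link coarsening $\L'$ of $\L$. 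Assembling these, the induced map in \eqref{equa:pi1noexcep} is an isomorphism; the argument is structurally the $\pi_1$-prototype of what \thmref{thm:coarseningnoexcep} will do in all degrees.
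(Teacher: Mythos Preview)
Your approach is quite different from the paper's, which is a two-line argument: surjectivity follows at once from \lemref{lem:pi1surj} (since there are no exceptional strata, $\Sigma_\cS=\Sigma_\cT$, so both intersection fundamental groups are quotients of $\pi_1(X\menos\Sigma,x_0)$ and the map $\iota_\star$ factors this common surjection), and injectivity is obtained by observing that the proof of \cite[Theorem~5.6]{CST9} for codimensional perversities carries over verbatim once one replaces each occurrence of $\ov p(\codim\,T)$ by $\ov p(T)$.

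Your local-to-global strategy via simple coarsenings and Van Kampen is essentially the $\pi_1$ analogue of what the paper later does for homology in \thmref{thm:Coars} via \propref{prop:Met1}; it can probably be made to work, but in the form you sketch it has gaps. First, several of the results you invoke (\lemref{LemTec}.iv), \lemref{DI}, the inequality \eqref{eq:AB}) are stated for $K$-perversities, whereas \propref{prop:noexceppi1} assumes only $\ov p\le\ov t$; you cannot simply set this aside. Second, your inductive step on source charts applies the statement to the link coarsening $(L,\cL,\cL')$, but the link of a CS set need not itself be a CS set; the usual fix is to work with $V\menos S$ instead, as in condition~(iii) of \propref{prop:Met1}. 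Third, and most seriously, your fountain-chart claim that the two sides ``collapse in a matching way'' is not a proof: comparing $\pi_1^{\ov p}$ on the refined side with $\pi_1^{\iota_\star\ov p}$ on the coarse side is precisely the delicate point, and the analogous homology computation (\lemref{lem:CalJoin}) requires genuine work even under the $K$-perversity hypothesis. The paper sidesteps all of this at the $\pi_1$ level by appealing directly to the hands-on argument of \cite{CST9}.
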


 \begin{proof}
 The surjectivity is a direct consequence of \lemref{lem:pi1surj}. For the injectivity,  
 the proof done in \cite[Theorem 5.6]{CST9} for a codimensional perversity  adapts immediately. The only
 modification is the replacement of $\ov{p}(\codim\, T)$ by $\ov{p}(T)$.
 \end{proof}
 
 If there are  exceptional strata, the result cannot be true, as shows \cite[Example 5.3]{CST9}, recalled below.

\begin{example}\label{exam:henri}
Let $\Sigma^2P$ be the double suspension  of the Poincaré sphere. 
This suspension being homeomorphic to the sphere $S^5$, we get a CS-coarsening 
$(S^5,\cS,\cT)$, where $\cT$ is the trivial stratification with only one regular stratum. 
The stratification $\cS$ has one regular stratum and four exceptional strata $S_0,\,S_1,\,S_2,\,S_3$, 
with $\dim S_0 =\dim S_1= 0$ and $ \dim S_2 =\dim S_3= 1$. 
Let us define a $K$-perversity $\ov p$ on $\cS$  by $D\ov p=\ov 1$, so which  verifies $\ov p \leq \ov t$. 
By definition of  the pushforward perversity, we have  $\iota_\star \ov p =\ov 0$.
The computation made in \cite{CST9}  gives 
$\pi^{\ov p}_1(S^5,\cS) \ne \{1\}$ and $\pi_1^{\iota_\star\ov p} (S^5,\cT)=\{1\}$.

\end{example}

 In \propref{prop:pi1ThomM}, we  provide  a sufficient condition for keeping the isomorphism between intersection fundamental groups of a CS-coarsening
 in the presence of exceptional strata. 
 We do not need the whole structure of Thom-Mather stratified space, or smoothly stratified space of 
  \cite{BanaglCie}, only the existence of neighborhoods of singular strata without control data.

\medskip
A stratification on a CS set, $(X,\cS)$, is equivalent 
(\cite[Proposition 1.5]{CT1} or \cite[Appendix A.5]{LurieSecondBook} for a
global presentation of this point of view)
to the data of a surjective continuous map $p_{\cS}\colon X\to \cS$,
where $\cS$ is the poset of strata endowed with the Alexandrov topology
($U\subset \cS$ is open if $S\preceq S'$ and $S\in U$ implies $S'\in U$).

\medskip
Let $(X,\cS,\cT)$ be a CS-coarsening. We have a partition of $\cS$ in three parts,
formed of the  exceptional, fountain or source strata, respectively.
We define $\cS_{e}$ as the quotient of $\cS$ by the equivalence relation generated by $S\sim S^\iota$ if $S$ exceptional.
 This provides us with a new filtered space $p_{\cS_{e}}\colon X\to \cS_{e}$ where
 the singular strata of $\cS_{e}$ are the singular strata of $\cS$ which are not exceptional
 in $(X,\cS,\cT)$.
  A priori, given \exemref{exam:encadrement}, there is no reason why $(X,\cS_{e})$ should be a CS set. In fact,
 it is true and can be deduced from \cite[Proposition 3.10]{MR4170473}.
 We provide in \propref{prop:ST+} an independent simple proof in the  case of a pre-Thom-Mather space.

  \medskip
  We have three CS-coarsenings, $(X,\cS,\cT)$, $(X,\cS,\cS_{e})$, $(X,\cS_{e},\cT)$,
  which give a decomposition of the identity map in
 \begin{equation}\label{equa:decompoexception}
 \xymatrix@1{ 
 (X,\cS)\ar[r]^-{\iota_{e}}\ar@/_1pc/[rr]_{\iota}&
 (X,\cS_{e})\ar[r]^-{\iota_{\Sigma}}&
 (X,\cT).
 }
 \end{equation}
By construction, the following properties are satisfied.
 \begin{enumerate}[i)]
 \item A singular source stratum or a fountain stratum $S$ in $(X,\cS,\cT)$ is not modified in $(X,\cS_{e})$, so $S=S^{\iota_{e}}$. 
 Therefore $S$ becomes a source stratum
 in $(X,\cS,\cS_{e})$ and  the coarsening
 $(X,\cS,\cS_{e})$ has only exceptional and source strata,
 \item The CS-coarsening $(X,\cS_{e},\cT)$ has no exceptional strata.
  \end{enumerate}

 \begin{definition}\label{def:petitThomMather}
A  \emph{pre-Thom-Mather space} of dimension $n$ is a Hausdorff filtered space $(X,\cS)$
of dimension $n$,
whose $i$-dimensional strata are $i$-dimensional topological manifolds for each~$i$, and
such that, for each singular stratum $S\in\cS$, there exist 
an open neighborhood $E$ of $S$,  endowed with the induced filtration,
a continuous map $\Pi\colon E\to S$,
an open cover, $\gU$, of $S$, 
a filtered compact  space, $(L,\cL)$, of (formal) dimension $n-\dim S-1$,
and,   for each $U\in\gU$,  a stratified homeomorphism
 $$\varphi_{U}\colon (\Pi^{-1}(U),\cS)\to (U\times\rc L, \cI_{U}\times \rc\cL),$$
verifying
$\pr_{1}\circ \varphi_{U}=\Pi$ and $\varphi_{U}(x)=(x,\tv)$
for any $x\in U$, with $\tv$ the apex of the cone.
The map $\Pi$  is a \emph{conical neighborhood} of $S$ and
$\left\{\gU, (\varphi_{U})_{U\in\gU}\right\}$ is a  \emph{conical trivialization} of $\Pi$.
The filtered space $(L,\cL)$ is \emph{the link} of $S$.
\end{definition}

\begin{proposition}\label{prop:ST+}
Let $(X,\cS,\cT)$ be a  CS-coarsening  where $(X,\cS)$ is a pre-Thom-Mather space.
Then, in the decomposition \eqref{equa:decompoexception} of the identity map, 
the filtered space $(X,\cS_{e})$ is a pre-Thom-Mather space.
Moreover,  any singular stratum in $\cS_{e}$ is a singular stratum in $\cS$ and
one can choose the same conical neighborhood for $S$ in $(X,\cS)$ and in $(X,\cS_{e})$.
\end{proposition}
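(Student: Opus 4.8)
The statement concerns showing that the space $(X,\cS_e)$ obtained by merging each exceptional stratum $S$ with the regular stratum $S^\iota$ containing it is again a pre-Thom-Mather space, with the extra precision that a stratum which is singular in $\cS_e$ is already singular (i.e., non-exceptional) in $\cS$ and may be given the same conical neighborhood. The approach is local: one only has to produce, for each singular stratum $T$ of $\cS_e$, an open neighborhood $E$, a retraction $\Pi\colon E\to T$, an open cover $\gU$ of $T$ and compatible stratified trivializations $\varphi_U$. The key observation is the one already recorded before the statement: the singular strata of $\cS_e$ are exactly the non-exceptional singular strata of $\cS$, so $T$ is a singular stratum of the pre-Thom-Mather space $(X,\cS)$ and already comes equipped with such data $\bigl\{E,\Pi,\gU,(\varphi_U)_{U\in\gU}\bigr\}$, using a link $(L,\cL)$. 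The only thing that changes when passing to $\cS_e$ is the stratification of $E$: the strata of $E$ that were exceptional in $\cS$ get absorbed into their $\iota$-images.

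First I would fix a non-exceptional singular stratum $T$ of $\cS$ and the Thom-Mather data $(E,\Pi,\gU,(\varphi_U))$ for it in $(X,\cS)$, with link $(L,\cL)$. Via $\varphi_U$, the restriction $\Pi^{-1}(U)$ is stratified-homeomorphic to $(U\times \rc L, \cI\times\rc\cL)$, so an exceptional stratum $S$ of $\cS$ meeting $\Pi^{-1}(U)$ corresponds, under $\varphi_U$, to a stratum of the form $U\times(]0,1[\times R)$ for some stratum $R$ of $L$ (it cannot be a stratum mapping to the apex, since that apex stratum is $T$ itself and $T$ is not exceptional). Thus passing from $\cS$ to $\cS_e$ inside $\Pi^{-1}(U)$ amounts to passing from the cone filtration $\rc\cL$ on $\rc L$ to the coarser filtration $\rc\cL'$ obtained by merging, in $L$, each stratum $R$ with $\{R \text{ exceptional in the coarsening}\}$ into the regular stratum of $L$ containing it. I would then check that this coarsened $(L,\cL')$ is again a compact filtered space (it is: $L$ is unchanged as a space, and we are only deleting some skeleta), so that $(U\times\rc L,\cI\times\rc\cL')$ makes sense with the same $\pr_1$ and the same apex section $x\mapsto(x,\tv)$. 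Consequently the very same maps $\Pi$, the same cover $\gU$, and the same $\varphi_U$ — now read as stratified homeomorphisms $\varphi_U\colon(\Pi^{-1}(U),\cS_e)\to(U\times\rc L,\cI\times\rc\cL')$ — exhibit $\Pi$ as a conical neighborhood of $T$ in $(X,\cS_e)$ with link $(L,\cL')$.

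The point requiring a little care, and which I expect to be the main (modest) obstacle, is to verify that the identifications are \emph{globally consistent}: on overlaps $U\cap U'$ the transition homeomorphisms $\varphi_{U'}\circ\varphi_U^{-1}$ of the original Thom-Mather atlas were stratified for $\cI\times\rc\cL$, and I must argue they remain stratified for the coarsened $\cI\times\rc\cL'$. This follows because the coarsening of $\rc\cL$ into $\rc\cL'$ is canonical — it is determined by which strata of $L$ are "exceptional" in the coarsening $(X,\cS,\cT)$, a condition intrinsic to the germ of $(X,\cS,\cT)$ near $T$ and hence preserved by any stratified self-homeomorphism over $U\cap U'$ — so a map stratified for the fine filtrations automatically descends to one stratified for the canonical coarsenings. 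I would also remark that the strata-manifold condition is untouched: $(X,\cS_e)$ has the same underlying space and the same regular part as needed, and the $i$-dimensional strata of $\cS_e$ are unions of $i$-dimensional manifold strata of $\cS$ glued along lower-skeleta that have been removed, but in the Thom-Mather/CS-set situation an $i$-dimensional stratum of $\cS_e$ is in fact a topological $i$-manifold (this is exactly the content of \cite[Proposition 3.10]{MR4170473}, or it follows from the local product description above together with the fact that $S^\iota$ is a manifold stratum of $\cT$ and $\cS_e$ sits between $\cS$ and $\cT$). Assembling these local models over the cover $\gU$ and invoking the Hausdorff property inherited from $(X,\cS)$ yields that $(X,\cS_e)$ is a pre-Thom-Mather space, with the asserted coincidence of conical neighborhoods; this finishes the proof.
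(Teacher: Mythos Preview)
Your proposal is correct and follows essentially the same route as the paper: take a singular stratum $T$ of $\cS_e$ (necessarily a non-exceptional singular stratum of $\cS$), reuse its $\cS$-conical neighborhood $(E,\Pi,\gU,(\varphi_U))$ with link $(L,\cL)$, and observe that passing from $\cS$ to $\cS_e$ on $\Pi^{-1}(U)$ amounts to replacing $\cL$ by a canonical coarsening $\cL'$. The paper packages this last step slightly more economically by first checking that the operation $(X,\cS,\cT)\mapsto(X,\cS_e)$ is \emph{hereditary} to open subsets, i.e.\ $(\cS_e)_Y=(\cS_Y)_e$; this immediately gives that $\varphi_U$ is a stratified homeomorphism onto $(U\times\rc L,(\cI\times\rc\cL)_e)=(U\times\rc L,\cI\times\rc\cL')$, without having to track which link strata are exceptional by hand.

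One remark: your discussion of transition maps on overlaps $U\cap U'$ is superfluous. \defref{def:petitThomMather} imposes no cocycle condition on the $\varphi_U$; it only asks for one compact filtered link $(L,\cL)$ per stratum and, for each $U$, a stratified homeomorphism commuting with $\Pi$ and the apex section. What \emph{is} needed is that the coarsening $\cL'$ of $\cL$ is the same for every $U$, and this is exactly what your ``canonical'' observation (or the paper's heredity lemma) provides.
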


\begin{proof}
We first show that the association $(X,\cS,\cT)\mapsto (X,\cS_{e})$ is hereditary;
i.e., for any open subset $Y$ of $X$, one has $(\cS_{e})_{Y}=(\cS_{Y})_{e}$.
So let $Y$ be an open subset of $X$ and  $Q\in \cS_{Y}$. 
Then $Q$ is a connected component of $S\cap Y$ with $S\in \cS$.
As the restriction to $Y$ keeps the dimension of strata, we deduce that 
$Q$ is singular if and only if $S$ is, and that $Q$ is exceptional if and only if $S$ is.
This implies the heredity.

\medskip
Let $S\in\cS_{e}$ be a singular stratum. By construction, $S\in\cS$ is a singular source stratum of $(X,\cS,\cS_{e})$.
As $(X,\cS)$ is a pre-Thom-Mather space, there exist a 
conical neighborhood $\Pi\colon E\to S$ and  a {conical trivialization}
$\left\{\gU, (\varphi_{U})_{U\in\gU}\right\}$, with a stratified homeomorphism
$$\varphi_{U}\colon (\Pi^{-1}(U),\cS)\to (U\times\rc L, \cI_{U}\times \rc\cL).$$
As the association $(X,\cS,\cT)\mapsto (X,\cS_{e})$ is hereditary, we get a stratified homeomorphism
$$\varphi_{U}\colon (\Pi^{-1}(U),\cS_{e})\to (U\times\rc L, (\cI_{U}\times \rc\cL)_{e}).$$
We have to determine $(\cI_{U}\times \rc\cL)_{e}$.
As $\varphi_{U}(S)= U\times \{\tv\}$, we reduce the question to $\Pi^{-1}(U)\menos S$ and the study to
$(\cI_{U}\times \cL\times \cI_{]0,1[})_{e}$. 
The  construction of $\cS_{e}$ consists of a quotient in the poset $\cS$ (or an amalgamation of strata 
in $X$), thus  $(\cI_{U}\times \cL\times \cI_{]0,1[})_{e}$ is of the form 
$(\cI_{U}\times \cL'\times \cI_{]0,1[})$ where $\cL'$ is a coarsening of $\cL$.
This gives the result.
\end{proof}

\begin{remark}
If $(X,\cS)$ is a Thom-Mather space, it is (see \cite[Section 2.8.2]{LibroGreg} for details) a pseudomanifold. Therefore,
$(L,\cL)$ is a pseudomanifold. The space $L$ is also endowed with a stratification $\cL_{\cT}$ induced by $\cT$, giving a
coarsening $(L,\cL,\cL_{\cT})$.  One can observe that the coarsening $\cL'$ of the previous proof corresponds to
its associated $\cL_{e}$.
\end{remark}

In reference to the decomposition \eqref{equa:decompoexception} of the identity map, 
let us notice that, in the stratified map $\iota_{\Sigma}$, 
 the two stratifications have the same regular set.
 In \cite{CST9}, we have proven that  $\iota_{\Sigma}$  induces an isomorphism between intersection homotopy groups
 for GM-perversities.
 In \thmref{thm:coarseningnoexcep}, we extend it  to general perversities of MacPherson 
 (\cite{RobertSF}, \defref{def:perversity}).
 The  case $\iota_{e}$ was not considered in \cite{CST9}; we study it in \thmref{thm:coarseningwithexcep}.
 The next result is a step in this direction.

   \begin{proposition}\label{prop:pi1ThomM}
 Let $(X,\cS,\cT,x_{0})$ be a pointed CS-coarsening  where $(X,\cS)$ is a
 connected normal pre-Thom-Mather space  with a finite number of strata.
 Let $\ov p$ be a $K$-perversity  of pushforward perversity $\iota_{\star}\ov p$ and such that $\ov p\leq \ov t$.
If the cone on the links of all exceptional strata, $S$,
verifies $\pi_{1}^{\ov p}(\rc L,\rc\cL,x_{0})=\{1\}$, 
then the identity map induces  an isomorphism,
 \begin{equation}\label{equa:pi1ThomM}
\pi_{1}^{\ov p}(X,\cS,x_{0})\xrightarrow{\cong}\pi_{1}^{\iota_\star\ov p}(X,\cT,x_{0}).
\end{equation}
 \end{proposition}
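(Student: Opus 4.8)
The plan is to use the decomposition \eqref{equa:decompoexception} of the identity map, $\iota = \iota_{\Sigma}\circ \iota_{e}$, and treat the two factors separately. For the factor $\iota_{\Sigma}\colon (X,\cS_{e})\to (X,\cT)$, by \propref{prop:ST+} the filtered space $(X,\cS_{e})$ is again a (connected, normal) pre-Thom-Mather space, the CS-coarsening $(X,\cS_{e},\cT)$ has no exceptional strata (property ii) after \eqref{equa:decompoexception}), and $\iota_{\star}\ov p$ restricts to a perversity $\le \ov t$ on $\cS_{e}$ by \lemref{LemTec}.iv) together with \lemref{DI}. Hence \propref{prop:noexceppi1} applies directly and gives an isomorphism $\pi_{1}^{\iota_{e\star}\ov p}(X,\cS_{e},x_{0})\xrightarrow{\cong}\pi_{1}^{\iota_{\star}\ov p}(X,\cT,x_{0})$. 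So the whole problem reduces to the factor $\iota_{e}\colon (X,\cS)\to (X,\cS_{e})$, where only exceptional and source strata appear.

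For the factor $\iota_{e}$, the surjectivity of the induced map on $\pi_1$ comes for free from \lemref{lem:pi1surj} (both sides are quotients of $\pi_1(X\menos\Sigma_{\cS})$, which maps onto $\pi_1(X\menos\Sigma_{\cS_e})$). The real content is injectivity. Here I would follow the scheme of \cite[Theorem 5.6]{CST9}: a $\ov p$-full loop in $(X,\cS)$ that bounds a $\iota_{e\star}\ov p$-full disk in $(X,\cS_{e})$ must be shown to already bound a $\ov p$-full disk in $(X,\cS)$. The disk meets the exceptional strata (which in $(X,\cS_{e})$ have been absorbed into regular strata), and one must push the disk off these strata. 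Using the pre-Thom-Mather structure, cover each exceptional stratum $S$ by a conical neighborhood $\Pi\colon E\to S$ with link $L$; over the part of the disk lying in $E$, the local model is $\R^a\times\rc L$, and pushing the disk off $S$ amounts to a homotopy within $\rc L\menos\{\tv\}$ rel its trace on $L$. This is exactly where the hypothesis $\pi_{1}^{\ov p}(\rc L,\rc\cL,x_{0})=\{1\}$ enters: it guarantees that the obstruction to such a homotopy, living essentially in $\pi_{1}^{\ov p}$ of the cone on the link, vanishes, so the piece of the disk over $E$ can be replaced by one avoiding $S$ while staying $\ov p$-full (allowability being controlled by the cone formula \eqref{equa:cone} and the inequality \eqref{eq:regS}, since $\ov p(S)\ge 0$ and $S$ is not $1$-exceptional). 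Finiteness of the number of strata lets one do this one exceptional stratum at a time; connectedness and normality ensure the relevant links and regular parts are connected (via \lemref{lem:nor}, \propref{lemapi1}, \propref{prop:connectionLink}) so that basepoints and path-components behave as needed.

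Combining the two factors, $\pi_{1}^{\ov p}(X,\cS,x_{0})\xrightarrow{\cong}\pi_{1}^{\iota_{e\star}\ov p}(X,\cS_{e},x_{0})\xrightarrow{\cong}\pi_{1}^{\iota_{\star}\ov p}(X,\cT,x_{0})$, where the composite of the two pushforward perversities is $\iota_{\star}\ov p$ since $(\iota_{\Sigma})_{\star}(\iota_{e})_{\star}=\iota_{\star}$; this yields \eqref{equa:pi1ThomM}. The main obstacle I anticipate is making rigorous the ``push the disk off an exceptional stratum'' step: unlike in \cite[Theorem 5.6]{CST9}, here the exceptional stratum sits inside a regular stratum of the coarser structure, so one does not have a genuine link argument in the coarsening and must work entirely inside $(X,\cS)$, carefully tracking polyhedral dimensions of preimages to preserve $\ov p$-fullness throughout the homotopy — and one must verify that the local hypothesis on $\pi_1^{\ov p}(\rc L,\rc\cL)$ is genuinely what controls this (as opposed to a higher homotopy or homology obstruction), which is why the statement is limited to the fundamental group.
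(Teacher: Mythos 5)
Your reduction via the decomposition $\iota=\iota_{\Sigma}\circ\iota_{e}$ and your treatment of the factor $\iota_{\Sigma}$ (via \propref{prop:ST+}, \lemref{DI} and \propref{prop:noexceppi1}) coincide with the paper's proof. The genuine gap is in the factor $\iota_{e}$: the step ``push the disk off an exceptional stratum; the obstruction lives in $\pi_{1}^{\ov p}(\rc L,\rc\cL)$ and vanishes by hypothesis'' is essentially the whole content of the proposition, and you leave it as an anticipated obstacle rather than proving it. Concretely, in the topological category there is no general position: an $\iota_{e\star}\ov p$-full disk may meet an exceptional stratum $S$ in dimension up to $2$, whereas $\ov p$-fullness forces $\dim\sigma^{-1}(S)\leq -D\ov p(S)\leq 0$ for its $2$-simplexes, and the obstruction to removing such an intersection rel boundary is a priori a relative $\pi_{2}$-type obstruction involving the global topology of $S$ (its $\pi_{1}$ and $\pi_{2}$, and the monodromy of the conical neighborhood over $S$), not merely the $\pi_{1}$ of a local cone chart $\R^a\times\rc L$. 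Your ``one exceptional stratum at a time'' scheme would moreover require the intermediate filtered spaces to remain CS sets, which is not automatic (cf.\ \exemref{exam:encadrement}) and which you do not address.

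The paper closes exactly this gap by a different mechanism. It inducts on the depth and the number of minimal singular strata of $\cS$, covers $X$ by a conical neighborhood $E$ of a minimal stratum $S$ and by $X\menos S$, and glues with the Van Kampen theorem for intersection fundamental groups of \cite{CST9}. On $E$, the pre-Thom-Mather structure makes $\Pi\colon E\to S$ a locally trivial filtered fibration with fiber $\rc L$ (Gajer), yielding an exact sequence $\pi_{2}(S)\to\pi_{1}^{\ov p}(\rc L,\rc\cL)\to\pi_{1}^{\ov p}(E,\cS)\to\pi_{1}(S)\to 0$. When $S$ is exceptional, $E$ carries a single stratum of $\cS_{e}$, so the target sequence is the ordinary homotopy sequence of a fibration with contractible fiber; the hypothesis $\pi_{1}^{\ov p}(\rc L,\rc\cL)=\{1\}$ and the five lemma then give $\pi_{1}^{\ov p}(E,\cS)\cong\pi_{1}(E)$. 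When $S$ is not exceptional, the comparison of the two fibration sequences reduces, via the cone formula \eqref{equa:cone}, to the induction hypothesis applied to $\Pi^{-1}(U)\menos S$. This is the rigorous version of the argument you wanted to run ``entirely inside $(X,\cS)$'', and it is where the global $\pi_{2}(S)$-contribution your local picture ignores gets absorbed. As written, your proposal is a correct outline for the first factor but does not constitute a proof of the second.
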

 
 \begin{proof}
Let us consider the decomposition recalled in \eqref{equa:decompoexception},
\begin{equation}
 \xymatrix@1{
 (X,\cS)\ar[r]^-{\iota_{e}}&
 (X,\cS_{e})\ar[r]^-{\iota_{\Sigma}}&
 (X,\cT).
 }
 \end{equation}
 By \propref{prop:noexceppi1} and \lemref{DI},  
 the simplicial map $\iota_{\Sigma}$ induces an isomorphism between the
 intersection fundamental groups. So we are reduced to the study of the CS-coarsening $(X,\cS,\cS_{e})$
 induced by $(X,\cS,\cT)$.

 \medskip
 The proof is done  by induction on the depth and the number of
minimal singular strata in $\cS$. 
Let $S\in\cS$ be a minimal stratum with a 
conical neighborhood $\Pi\colon E\to S$ of associated link $(L,\cL_{\cS})$.
The inductive step  consists of  an open cover of $X$ by $E$ and $X\menos S$. 
Let us observe that $E\cap (X\menos S)=E\menos S$. The induction hypothesis can be applied to
$X\menos S$ and $E\menos S$. Therefore, with
the Van Kampen theorem of intersection fundamental groups (\cite[Theorem 5.1]{CST9}),
the proof is reduced to the case of $E$ with the induced filtrations.
The hypotheses of connectivity, required in \cite[Theorem 5.1]{CST9}, are satisfied: 
for $E$, it is a consequence of the connectivity of $S$ and $\rc L$, for $X\menos S$ it comes from
$X\menos \Sigma_{\cS}\subset X\menos S$ and $\ov{X\menos \Sigma_{\cS}}=X$.

  \medskip
By \cite[Theorem 2.3]{Gajer1}, the  map $\Pi$ is a locally trivial filtered fibration 
from which we deduce (\cite[Theorem 2.2]{Gajer1}) a   long exact sequence in intersection
 homotopy groups. 
 (When the spaces are  path connected, we  omit the reference to a basepoint.)
We already have  informations on some groups in this sequence.
 First, as there is only one stratum in $S$, we have $\pi_{k}^{\ov q}(S,\cI_{S})=\pi_{k}(S)$ for any perversity $\ov q$.
We know from Propositions \ref{prop:normal} and \ref{prop:connectionLink} that 
$\pi_{0}^{\ov p}(L,\cL_{S})=\pi_{0}^{\ov p}(L,\cL_{S_{e}})$ consists of a single element.
The cone formula \eqref{equa:cone} being also verified for $\pi_{0}^{\ov p}$, we get
$\pi_{0}^{\ov p}(\rc L,\rc\cL_{S})=\pi_{0}^{\ov p}(\rc L,\rc\cL_{S_{e}})=\{\star\}$.
We have two possibilities for   the singular stratum $S\in\cS$.

 \smallskip
 \emph{Suppose that $S$ is exceptional in $(X,\cS,\cS_{e})$.} 
 By hypothesis, we have $\pi_{1}^{\ov p}(\rc L,\rc\cL_{\cS})=\{1\}$.
Here, the total space of the locally trivial fibration
 $\rc L\to E\to S$
 has only one stratum in $\cS_{e}$; thus the corresponding long exact sequence of intersection homotopy groups 
 is the long exact sequence of homotopy groups. So we have a diagram of short exact sequences,
$$\xymatrix{
\pi_{2}(S)\ar[r]\ar[d]^\cong&\pi_{1}^{\ov p}(\rc L,\rc\cL_{\cS})\ar[r]\ar[d]^\cong&\pi_{1}^{\ov p}(E,\cS)\ar[r]\ar[d]&\pi_{1}(S)\ar[r]\ar[d]^\cong&0\\
\pi_{2}(S)\ar[r]&\pi_{1}(\rc L)\ar[r]&\pi_{1}(E)\ar[r]&\pi_{1}(S)\ar[r]&0,
}$$
and an isomorphism
$\pi_{1}^{\ov p}(E,\cS)\cong \pi_{1}(E)$. 

\medskip
 \emph{Suppose that $S$ is not exceptional in $(X,\cS,\cS_{e})$.} Then $S$ is singular in $(X,\cS_{e})$ and, by 
\propref{prop:ST+}, we can choose $\Pi$ as conical neighborhood for $\cS$ and $\cS_{e}$. 
 We thus have a diagram of exact sequences:
 $$\xymatrix{
\pi_{2}(S)\ar[r]\ar[d]^\cong&\pi_{1}^{\ov p}(\rc L,\rc\cL_{\cS})\ar[r]\ar[d]^{\psi_{\rc L}}&\pi_{1}^{\ov p}(E,\cS)\ar[r]\ar[d]^{\psi_{E}}&\pi_{1}(S)\ar[r]\ar[d]^\cong&0\\
\pi_{2}(S)\ar[r]&\pi_{1}^{\iota_{\star}\ov p}(\rc L,\rc\cL_{\cS_{e}})\ar[r]
&\pi_{1}^{\iota_{\star}\ov p}(E,\cS_{e})\ar[r]&\pi_{1}(S)\ar[r]&0.
}$$
 \emph{Claim: The map $\psi_{\rc L}$ is an isomorphism,} from which we deduce that 
 $\psi_{E}\colon \pi_{1}^{\ov p}(E,\cS)\xrightarrow{\cong}\pi_{1}^{\iota_{\star}\ov p}(E,\cS_{e})$ is an isomorphism,
 as required.

 \medskip
 From \defref{def:petitThomMather} and \propref{prop:homotopyxR}, we have to prove that the homomorphism
 $$\iota_{\Pi^{-1}(U)}\colon \pi_{1}^{\ov p}(\Pi^{-1}(U),\cS)\to
 \pi_{1}^{\iota_{\star}\ov p}(\Pi^{-1}(U),\cS_{e})
 $$
 is an isomorphism. Since $S^{\iota_{e}}=S$, the stratum $S$ is a source and we have
 $D\ov p(S)=\iota_{\star}D\ov p(S)=D \iota_{\star}\ov p(S)$, cf. \lemref{LemTec}. 
 We use now the cone formula \eqref{equa:cone}.
 If $D\ov p(S)=0$, the domain and codomain of $\iota_{\Pi^{-1}(U)}$ are equal to $\{1\}$. 
 If not, we have $\pi_{1}^{\ov p}(\Pi^{-1}(U),\cS)\cong \pi_{1}^{\ov p}(\Pi^{-1}(U)\menos S,\cS)$
 and a similar isomorphism with the stratification $\cS_{e}$.
 The result comes from the induction applied to
 $$\iota_{\Pi^{-1}(U)\menos S}\colon \pi_{1}^{\ov p}(\Pi^{-1}(U)\menos S,\cS)\to
 \pi_{1}^{\iota_{\star}\ov p}(\Pi^{-1}(U)\menos S,\cS_{e}).
 $$
 \end{proof}

 \begin{example}\label{ex:jpb}
 From \lemref{lem:pi1surj} and the cone formula \eqref{equa:cone}, 
 we know that the hypothesis $\pi_{1}^{\ov p}(\rc L,\rc\cS)=\{1\}$ in \propref{prop:pi1ThomM} is a consequence of 
 $\pi_{1}(L\menos \Sigma_{L})=\{1\}$. Yet, this last condition is stronger, as shows the following example.

Let $L$ be the pinched torus described in \cite{GM1}. 
Its stratification $\cL$ has exactly one singular point and verifies  $\pi_{1}(L\menos \Sigma_{L}) = \Z$. 
We choose  $\ov p=\ov 0$.
As we have already pointed out, the allowable 1-simplices are the simplices included in the regular part
and an allowable 2-simplex can meet the singular set in dimension~0.
So,  the generator of $\pi_{1}(L\menos \Sigma_{L})$ is killed in $\pi_{1}^{\ov 0}(L,\cL)$.
  In short,   we have 
  $ \pi_1(L\menos \Sigma_L) =\Z$ 
  and  
  $\pi_{1}^{\ov{0}}(\rc L,\rc\cL)=
  \{1\}$.
\end{example}
 
  \section{Proofs of Theorems~\ref{thm:coarseningnoexcep} and \ref{thm:coarseningwithexcep}}\label{sec:proof}
 
 \begin{quote}
 Let $(X,\cS,\cT)$ be a CS-coarsening  endowed  with a $K$-perversity $\ov p$.
 The identity map induces a simplicial map 
 $\crG(\iota)\colon \crG_{\ov p}(X,\cS)\to \crG_{\iota_{\star}\ov p}(X,\cT)$.
 In this section, we prove that the latter  induces  a homology isomorphism
 $H_{\star}(\crG_{p}(X,\cS);\crG(\iota)^\star\gE)
 \xrightarrow{\cong}
  H_{\star}(\crG_{\iota_{\star}\ov p}(X,\cT);\gE)$, for any local coefficient system, $\gE$, on 
 $\crG_{\iota_{\star}\ov p}(X,\cT)$. 
 We then deduce the proofs of Theorems~\ref{thm:coarseningnoexcep} and \ref{thm:coarseningwithexcep}.
 \end{quote}

To achieve this objective,  recall  a slight modification of \cite[Theorem 5.1.4]{LibroGreg}, see also \cite[Theorem 5.1]{CST3}.

\begin{proposition}\label{prop:Met1}
Let $\mathcal F_{X}$ be the category whose objects are (stratified homeomorphic to) open subsets
of a given  CS set $(X,\SS)$ and whose morphisms are  stratified homeomorphisms and inclusions.
Let  $\mathcal Ab_{\star}$ be the category of graded abelian groups. Let $F_{\star},\,G_{\star}\colon \mathcal F_{X}\to \mathcal Ab$
be two functors and
 $\Phi\colon F_{\star}\to G_{\star}$ be a natural transformation satisfying
 the conditions listed
below.
\begin{enumerate}[(i)]
\item The functors $F_\star$ and $G_{\star}$ admit exact Mayer-Vietoris sequences and the natural transformation $\Phi$ 
 induces a commutative diagram between these sequences.
\item If $\{U_{\alpha}\}$ is a increasing collection of open subsets of $X$  and $\Phi\colon F_{\star}(U_{\alpha})\to G_{\star}(U_{\alpha})$ is an isomorphism for each $\alpha$, then $\Phi\colon F_{\star}(\cup_{\alpha}U_{\alpha})\to G_{\star}(\cup_{\alpha}U_{\alpha})$  is an isomorphism.
\item Let $(\varphi,V)$ be any element of a basis of  conical charts of a singular point $x \in S$ with $S \in \SS$.
If 
$\Phi\colon F_{\star}(V\menos S)\to G_{\star}(V\menos S)$
is an isomorphism, then so is
$\Phi\colon F_{\star}(V)\to G_{\star}(V)$. 
\item If $U$ is an open subset of X contained within a single stratum and homeomorphic
to  an Euclidean space, then $\Phi\colon F_{\star}(U)\to G_{\star}(U)$ is an isomorphism.
\end{enumerate}
Then $\Phi\colon F_{\star}(X)\to G_{\star}(X)$ is an isomorphism.
\end{proposition}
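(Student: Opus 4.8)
The plan is to run the classical Mayer--Vietoris bootstrap (Bredon's comparison method), with an outer induction on $\depth X$; this is essentially the argument proving \cite[Theorem 5.1.4]{LibroGreg}, and I only sketch it. The abstract principle I would use is: \emph{if an open set $Y$ admits a basis $\mathcal B$ of opens, stable under finite intersections, with $\Phi(U)$ an isomorphism for every $U\in\mathcal B$, then $\Phi(Y)$ is an isomorphism.} Indeed, hypothesis (i) together with the five lemma applied to the two Mayer--Vietoris ladders shows that $\{U\ \text{open in }Y\mid \Phi(U)\ \text{iso}\}$ is closed under forming $U\cup V$ whenever $U$, $V$ and $U\cap V$ belong to it, hypothesis (ii) makes it closed under directed unions, and the usual (transfinite) induction over a cover by members of $\mathcal B$ then forces it to contain all opens of $Y$. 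So it suffices to produce, for $Y=X$, such a basis; I would take $\mathcal B$ to be the family of all open $U\subseteq X$ on which $\Phi(U')$ is an isomorphism for \emph{every} open $U'\subseteq U$. This family is tautologically stable under passing to open subsets, hence under finite intersection, so the whole proof reduces to showing that $\mathcal B$ is a basis, i.e.\ that every point of $X$ has arbitrarily small neighborhoods in $\mathcal B$.

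Fix $x\in X$ and a conical chart $\varphi\colon \R^k\times\rc L\to V\subseteq X$ of $x$, chosen as small as desired, where $S$ denotes the stratum of $x$, $\varphi^{-1}(S)=\R^k\times\{\tv\}$, and $V\menos S\cong\R^k\times L\times{]0,1[}$. If $x$ is regular then $L=\emptyset$, $V\cong\R^n$, and I treat this case at the end. If $x$ is singular, the key observation is that $V\menos S$ is a CS set of depth strictly smaller than $X$, since $\depth(V\menos S)=\depth L=\depth(\rc L)-1=\depth V-1\leq\depth X-1$. Inside $V\cong\R^k\times\rc L$ I would consider the family $\mathcal B_V$ consisting of all open subsets of $V\menos S$ together with all sets $C\times\rc_r L$, where $C\subseteq\R^k$ is a convex open set, $0<r\leq 1$, and $\rc_r L:=L\times[0,r[\big/(L\times\{0\})$. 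Because convex $\cap$ convex is convex, $\rc_r L\cap\rc_{r'}L=\rc_{\min(r,r')}L$, and the intersection of a set $C\times\rc_r L$ with an open subset of $V\menos S$ still lies in $V\menos S$, the family $\mathcal B_V$ is a basis of $V$ stable under finite intersections.

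Next I would verify that $\Phi$ is an isomorphism on every member of $\mathcal B_V$. On an open subset of $V\menos S$ this holds by the induction hypothesis: conditions (i)--(iv) are inherited by $V\menos S$ and by its open subsets (Mayer--Vietoris restricts, increasing unions restrict, and conical charts resp.\ Euclidean single-stratum opens sitting inside $V\menos S$ are again of that kind in $X$), and $\depth(V\menos S)<\depth X$, so applying the Proposition to each open subset of $V\menos S$ gives that $\Phi$ is an isomorphism there. A set $W=C\times\rc_r L\in\mathcal B_V$ is, through $\varphi$, a conical chart of a point of $S$ with link $L$, and its part $W\menos S$ is an open subset of $V\menos S$, on which $\Phi$ is an isomorphism by the previous sentence; hypothesis (iii) then yields that $\Phi$ is an isomorphism on $W$. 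Thus $\Phi$ is an isomorphism on every member of $\mathcal B_V$, so by the principle of the first paragraph (with $Y=V$) it is an isomorphism on every open subset of $V$; hence $V\in\mathcal B$. When $x$ is regular, $\mathcal B_V$ degenerates to the collection of convex open subsets of $\R^n$, on which $\Phi$ is an isomorphism by hypothesis (iv) directly, so again $V\in\mathcal B$; this also settles the base case $\depth X=0$. Since $x$ and the size of $V$ were arbitrary, $\mathcal B$ is a basis, and the principle of the first paragraph (with $Y=X$) finishes the proof.

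I expect the delicate point to be precisely the organization of this bootstrap: conical charts are \emph{not} stable under intersection, so they cannot serve as the basis directly, and it is essential to insert the step ``removing the stratum of a conical chart strictly lowers the depth'', which is what genuinely reduces the local statement near a deep stratum to a lower-depth CS set. Two side issues need care but are routine: that hypothesis (i) does give, via the five lemma, the two-set gluing step used above; and that the transfinite Bredon induction over a possibly uncountable cover goes through --- this is the only place any point-set assumption on $X$ enters, and it is harmless once one has a basis stable under finite intersections and reads (ii) as concerning directed families.
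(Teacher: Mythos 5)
The paper does not actually prove this proposition --- it is recalled from Friedman's book (Theorem 5.1.4 of \cite{LibroGreg}) --- and your argument is a correct reconstruction of that standard Mayer--Vietoris bootstrap: induction on depth, reduction of the local case to the deleted chart $V\setminus S$ of strictly smaller depth via hypothesis (iii), and the basis-plus-transfinite-induction principle fed by (i), (ii) and (iv). Your hereditary choice of the outer basis $\mathcal B$ neatly disposes of the successor-stage problem there (since $W_\gamma\cap B_\gamma$ is an open subset of $B_\gamma\in\mathcal B$); for the inner bootstrap on $\mathcal B_V$ one still needs either the directed-family reading of (ii) or a secondary transfinite induction on the cardinality of the cover (or simply the Lindel\"of property of $W\cap S\subseteq\R^k$), all of which go through, so there is no genuine gap.
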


\begin{proof}
The only modification compared to the statement in Friedman's book lies in item (iii). 
In our case, property (iii) is required only for  a basis of conical charts.
 A careful reading of the proof in \cite[Theorem 5.1.4]{LibroGreg}  shows that this restriction is sufficient.
\end{proof}

Let $(X,\cS,\cT)$ be a  CS-coarsening.
According to  \eqref{equa:gajermap},  the identity map of $X$ defines a simplicial map
$\crG (\iota)\colon \crG_{\ov p}(X,\cS)\to \crG_{\iota_{\star}\ov p}(X,\cT)$.
A local coefficient system $\gE$ on $\crG_{\iota_{\star}\ov p}(X,\cT)$ induces local coefficient systems,
denoted $\iota^\star\gE$,
obtained by pullback of $\gE$,  on $\crG_{\ov p}(X,\cS)$ or on simplicial subsets. 
We should also note that each open subset $U$ of $X$ owns two structures of CS set that we denote
by $(U,\cS)$ and $(U,\cT)$.

\begin{corollary}\label{cor:kingargument}
Let $(X,\cS,\cT)$ be a  CS-coarsening,
$\ov p$ be a $K$-perversity  of pushforward perversity $\iota_{\star}\ov p$ on $(X,\cT)$
and $\gE$ be a local coefficient system on $\crG_{\iota_\star \ov p}( X, \cT)$.
Suppose that the following property is satisfied 
for a basis of  conical charts $(\varphi,V,\cS)$ in the CS set $(X,\cS)$, 
of a singular point $x\in S$ with $S\in\cS$:\\
``if $
\crG(\iota)_\star \colon H_\star(\crG_{\ov p}(V\menos S,  \cS);\iota^\star\gE)) 
\xrightarrow \cong
 H_\star(\crG_{\iota_\star \ov p}(V\menos S,  \cT),\gE)$
 is an isomorphism, then so is \\
$
\crG(\iota)_\star \colon H_\star(\crG_{\ov p}(V,\cS);\iota^\star\gE)
\xrightarrow \cong 
H_\star(\crG_{\iota_\star \ov p}(V,\cT);\gE)$.''
Then the identity map induces an isomorphism
$$
\crG(\iota)_\star \colon H_\star(\crG_{\ov p}( X, \cS);\iota^\star\gE )  
\xrightarrow{\cong}
 H_\star(\crG_{\iota_\star \ov p}( X, \cT);\gE ). 
$$
\end{corollary}

\begin{proof}
The proof consists of the verifications of the hypotheses of \propref{prop:Met1} for 
the CS set $(X,\cS)$,
$F_{\star}(-)=H_{\star}(\crG_{\ov p}(-,\cS);\iota^\star\gE)$
and $G_{*}(-)=H_{\star}(\crG_{\ov p}(-,\cT);\gE)$.
For  (ii) and (iv), it is immediate. Condition (i)
comes from the existence of a Mayer-Vietoris sequence in \cite[Theorem 3.13]{CST9}.
Finally, condition (iii) is required as a hypothesis in the statement .
\end{proof}

This corollary is  the appropriate tool to demonstrate the existence of  isomorphisms between homologies 
as  below in Theorems \ref{thm:Coars} and \ref{thm:CoarsThom}. 
In the proof of these theorems, we also use a combination of
homotopy and homology computations, where the following lemma will be essential.
Weak equivalences can be detected from connected components, fundamental groups and isomorphisms of homology groups for any local coefficient system (\cite[Proposition II.3.4]{MR0223432}). We use the next slight modification.

\begin{lemma}[{\cite[Lemma 3.8]{CST9}}]\label{lem:quillenfini} 
Let $f\colon K\to K'$ be a simplicial map between Kan simplicial sets whose induced maps verify the following properties:
$\pi_{0}(K)\cong \pi_{0}(K')$, $\pi_{1}(K,x)\cong \pi_{1}(K',f(x))$ for any $x\in K_{0}$,
$H_{j}(K;f^\star\gE)\cong H_{j}(K';\gE)$, for 
any local coefficient system $\gE$ on $K'$, 
 any $j\leq \ell$ and
$H_{\ell+1}(K;f^\star\gE)\to H_{\ell+1}(K';\gE)$ is a surjection.
Then, the map induced by $f$ between the  homotopy groups is an isomorphism 
for $j\leq \ell$ and a surjection for $j=\ell+1$.
\end{lemma}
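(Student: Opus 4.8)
\textbf{Proof proposal for \lemref{lem:quillenfini}.}
The plan is to reduce to the classical recognition principle stated just above (\cite[Proposition II.3.4]{MR0223432}): a map of Kan complexes is a weak equivalence if and only if it induces a bijection on $\pi_0$, an isomorphism on $\pi_1$ at every basepoint, and an isomorphism on $H_\star(-;\gE)$ for every local coefficient system $\gE$ on the target. Our hypotheses give exactly the first two items plus homology isomorphisms only through degree $\ell$ and a surjection in degree $\ell+1$; so we cannot invoke the full statement directly, and the task is to squeeze out the conclusion in the allowed range. The natural device is to factor $f$ as a cofibration (inclusion of a simplicial subset) followed by a trivial fibration, or more simply to replace $f$ by the inclusion of $K$ into the mapping cylinder $M_f$ and analyse the pair $(M_f,K)$; since both $K$ and $K'$ are Kan and $M_f\simeq K'$, the relative homotopy groups $\pi_j(M_f,K)$ measure the failure of $f$ to be an equivalence.

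First I would set up the mapping cylinder $M_f$ and recall that the inclusion $K'\hookrightarrow M_f$ is a homotopy equivalence, so that $f$ is a weak equivalence in degrees $\le \ell$ (resp.\ surjective in degree $\ell+1$) precisely when $\pi_j(M_f,K,x)=0$ for $j\le \ell$ (resp.\ when $\pi_{\ell+1}(M_f,K,x)\to 0$ is the whole story, i.e.\ no condition) — more precisely the long exact sequence of the pair shows $\pi_j(f)$ is iso for $j\le\ell$ and epi for $j=\ell+1$ iff $\pi_j(M_f,K)=0$ for $j\le\ell+1$. Next, since $\pi_0(f)$ and $\pi_1(f)$ are isomorphisms, the pair $(M_f,K)$ is $1$-connected at every basepoint; in particular $\pi_1(K)\to\pi_1(M_f)$ is an iso and the local systems on $M_f$ correspond to those on $K$. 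Then I would invoke the relative Hurewicz theorem for simplicial sets with local coefficients: for a $1$-connected pair, the first possibly-nonzero relative homotopy group sits in degree $\ge 2$ and, if $\pi_j(M_f,K)=0$ for $j<n$, then $\pi_n(M_f,K)\cong H_n(M_f,K;\gE)$ (as modules over $\pi_1$, with $\gE$ the appropriate local system), and $H_j(M_f,K;\gE)=0$ for $j<n$. Finally, feeding in the hypothesis that $H_j(K;f^\star\gE)\to H_j(K';\gE)$ is an isomorphism for $j\le\ell$ and a surjection for $j=\ell+1$ — equivalently $H_j(M_f,K;\gE)=0$ for $j\le\ell$ and $H_{\ell+1}(M_f,K;\gE)=0$ as well, using the long exact homology sequence of the pair — an induction on $n$ from $2$ to $\ell+1$ using relative Hurewicz shows $\pi_j(M_f,K)=0$ for all $j\le\ell+1$, which is the desired conclusion.

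The one point requiring care — and the main obstacle — is the passage from ordinary homology vanishing to homology vanishing with \emph{all} local coefficient systems at the inductive step, and the bookkeeping of basepoints and $\pi_1$-actions: the relative Hurewicz isomorphism with local coefficients is exactly what makes the induction close, but one must check that the coefficient system $f^\star\gE$ on $K$ is precisely the restriction of a system on $M_f$ and that the hypothesis ``for any local system $\gE$ on $K'$'' supplies enough systems on $M_f$ (it does, since $\pi_1(f)$ is an isomorphism). I would also note that the surjectivity-only hypothesis in degree $\ell+1$ is consistent with the argument: it yields $H_{\ell+1}(M_f,K;\gE)=0$ via the long exact sequence precisely because $H_\ell$ is already an isomorphism, so the induction reaches $n=\ell+1$ and gives $\pi_{\ell+1}(M_f,K)=0$, hence $\pi_{\ell+1}(f)$ surjective; no statement about $\pi_{\ell+1}$ being injective is claimed, consistent with having no control on $H_{\ell+2}$. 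Since this lemma is quoted from \cite[Lemma 3.8]{CST9}, I would at this stage simply cite that reference rather than reproduce the full argument, but the sketch above is the route I would take to reprove it.
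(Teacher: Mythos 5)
The paper does not actually prove this lemma: it is imported verbatim from \cite[Lemma 3.8]{CST9}, so there is no internal argument to compare yours against. That said, your sketch is the standard and correct route to such a ``truncated homological Whitehead theorem'': replace $f$ by the inclusion $K\hookrightarrow M_f$ (or factor $f$ as a cofibration followed by a trivial fibration so that the pair is Kan and relative homotopy groups are defined), observe that the $\pi_0$- and $\pi_1$-hypotheses make the pair $1$-connected at every basepoint and identify local systems on $K$, $M_f$ and $K'$, and then run the relative Hurewicz theorem with coefficients in $\Z[\pi_1]$ (equivalently, in the universal covers) inductively from $n=2$ to $n=\ell+1$. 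Your bookkeeping is right on the two delicate points: (a) the vanishing $H_{\ell+1}(M_f,K;\gE)=0$ does follow from surjectivity in degree $\ell+1$ \emph{combined with} injectivity in degree $\ell$ via the long exact sequence of the pair, and (b) the equivalence ``$\pi_j(f)$ iso for $j\le\ell$ and epi for $j=\ell+1$'' $\Leftrightarrow$ ``$\pi_j(M_f,K)=0$ for $j\le\ell+1$'' is exactly what the long exact homotopy sequence gives. The only ingredient you should make explicit rather than gesture at is that the quantifier ``for any local coefficient system $\gE$ on $K'$'' is used solely to feed $\gE=\Z[\pi_1(K',y)]$ (one per component) into the relative Hurewicz step; with that spelled out, the argument is complete and, as you say, in practice one simply cites \cite[Lemma 3.8]{CST9}, which itself is the announced ``slight modification'' of \cite[Proposition II.3.4]{MR0223432}.
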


\begin{theoremb}\label{thm:Coars}
 Let $(X,\cS,\cT)$ be a  CS-coarsening   without exceptional strata, 
  and let $\ov p$ be a $K$-perversity with   $\ov p\leq \ov t$, of pushforward perversity $\iota_{\star}\ov p$.
  Then, for 
any local coefficient system $\gE$ on $\crG_{\iota_\star \ov p}( X, \cT)$, the identity map induces an isomorphism
$$
\crG(\iota)_\star \colon H_\star(\crG_{\ov p}( X, \cS);\iota^\star\gE )  
\xrightarrow{\cong}
 H_\star(\crG_{\iota_\star \ov p}( X, \cT);\gE ). 
$$
 \end{theoremb}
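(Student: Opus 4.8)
The plan is to apply \propref{prop:Met1} to the functors $F_\star(U)=H_\star(\crG_{\ov p}(U,\cS);(\crG(\iota)|_U)^\star\gE)$ and $G_\star(U)=H_\star(\crG_{\iota_\star\ov p}(U,\cT);\gE)$ on the category $\mathcal F_X$ of open subsets of $X$, with the natural transformation $\Phi=\crG(\iota)_\star$ induced by the identity map. Note that by \lemref{DI}, the restriction of $\ov p$ to any open subset $U$ with its induced CS-coarsening structure $(U,\cS,\cT)$ is again a $K$-perversity with $\ov p\leq \ov t$, and $\iota_\star\ov p$ restricts to the pushforward of this restriction, so the functors and $\Phi$ are well defined and the hypothesis $\ov p\leq\ov t$ is inherited on all open subsets. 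The existence of Mayer--Vietoris sequences (condition (i)) for the homology of Gajer spaces, and compatibility of the collapse map with unions of increasing families (condition (ii)), are standard facts about $\crG_{\ov p}$ established in the references (\cite{CST9,CST8}); the naturality of the collapse map makes the Mayer--Vietoris diagram commute. Condition (iv) is immediate: if $U$ lies in a single stratum of $\cS$ it also lies in a single stratum of $\cT$ (no exceptional strata, and a source/fountain containment stays within one $\cT$-stratum), so both Gajer spaces are $\Sing U$ up to the absence of singular part, and $\Phi$ is the identity.

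The crux is condition (iii): given a conical chart $(\varphi,V)$ of a singular point $x\in S\in\cS$, assuming $\Phi$ is an isomorphism on $V\menos S$, deduce it on $V$. Here is where the three local models of \secref{sec:coarsening} enter. Since there are no exceptional strata, $S$ is either a source stratum or a fountain stratum of $(X,\cS,\cT)$. Using \propref{prop:homotopyxR} we may strip off the Euclidean factor $\R^a$ and reduce to a cone $\rc L$ (the source case, diagram \eqref{equa:source}) or to $\rc(\S^{b-1}\ast L)$ mapping to $\R^b\times\rc L$ (the fountain case, diagram \eqref{equa:virtual2}). In the source case, $D\ov p$ and $D\iota_\star\ov p$ agree on the apex by \lemref{LemTec}.i), so applying the cone formula for homology \eqref{equa:homologyGajercone}--\eqref{equa:conegajer} on both sides and the five lemma in the long exact sequences of the pairs $(\crG_{\ov p}\rc L,\crG_{\ov p}L)$ and $(\crG_{\iota_\star\ov p}\rc L,\crG_{\iota_\star\ov p}L)$, together with the inductive isomorphism on $L\menos\Sigma_L$ (which is the same on both sides), gives the isomorphism on the cone; one must track that $\iota$ restricted to $L$ is itself a CS-coarsening of smaller depth so the induction inside \propref{prop:Met1} can be invoked. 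In the fountain case one writes $\rc(\S^{b-1}\ast L)\cong\rc\S^{b-1}\times\rc L$ via the stratified homeomorphism \eqref{equa:hoeoconejoin} and compares, on the two sides of \eqref{equa:virtualiota2}, the behavior of the relevant perversities at the new apex $(\tu,\tv)$, governed by the inequality \eqref{eq:AB} of \remref{rem:perversityonlink}; the point is that the cone formula kills homology in the degree range where $D\ov p$ and $D\iota_\star\ov p$ might disagree, so the maps are isomorphisms in all relevant degrees.

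Once \propref{prop:Met1} yields $\Phi\colon F_\star(X)\to G_\star(X)$ is an isomorphism, the theorem is proved. The main obstacle I expect is the careful bookkeeping in condition (iii) for the fountain case: one has to match the join decomposition with the cone formula while keeping exact control of the perversity values on the amalgamated apex stratum, and ensure the induction hypothesis (on depth, via the link $L$) is legitimately available at that point of the argument --- this is precisely the kind of three-type-of-strata analysis imported from \cite{MR4170473,MR4742271} that the introduction advertises. A secondary subtlety is checking that the local coefficient system $\gE$, pulled back along $\crG(\iota)$, is compatible with all the cone and product reductions; this uses that $\crG(\iota)$ induces isomorphisms on $\pi_0$ and $\pi_1$ of the relevant open pieces, which follows from \propref{lemapi1} and \propref{prop:noexceppi1} applied to those pieces.
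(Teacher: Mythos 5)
Your overall architecture (reduce to \propref{prop:Met1}, dispose of (i), (ii), (iv), and concentrate on (iii) via the source/fountain local models) matches the paper's, but the way you close condition~(iii) has a genuine gap. In the source case you propose to conclude by ``applying the cone formula for homology \eqref{equa:homologyGajercone}--\eqref{equa:conegajer} on both sides and the five lemma.'' That only yields the isomorphism $H_k(\crG_{\ov p}\rc L)\cong H_k(\crG_{\iota_\star\ov p}\rc L')$ for $k\leq D\ov p(\tv)$ and a surjection at $k=D\ov p(\tv)+1$: unlike intersection homology, the homology of the Gajer space of a cone does \emph{not} vanish above $D\ov p(\tv)$ (the paper stresses that $H_\star(\crG_{\ov p}\rc L)$ ``has a different behavior''), so \eqref{equa:homologyGajercone} gives no control in higher degrees and the five lemma cannot produce the isomorphism in all degrees that (iii) demands. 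The same confusion surfaces in your fountain case when you write that ``the cone formula kills homology'': it is the \emph{homotopy} groups that the cone formula \eqref{equa:cone} kills above $D\ov p(\tv)$, not the homology of $\crG_{\ov p}$. The missing idea is the detour through homotopy: combine the range-limited homology isomorphism with the $\pi_0$ and $\pi_1$ isomorphisms of \propref{lemapi1} and \propref{prop:noexceppi1} and apply \lemref{lem:quillenfini} to get homotopy isomorphisms for $k\leq D\ov p(\tv)$ and a surjection at $D\ov p(\tv)+1$; then the homotopy cone formula \eqref{equa:cone} forces both sides to vanish above the cutoff, so $\crG(\iota)$ is a weak equivalence on the chart, and only then does one recover the homology isomorphism with arbitrary local coefficients in all degrees. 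You cite \propref{lemapi1} and \propref{prop:noexceppi1} only for coefficient bookkeeping, not for this essential step.

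Two secondary points. First, the local diagrams \eqref{equa:source} and \eqref{equa:virtual2} are only available for \emph{simple} CS-coarsenings, so before invoking them you must decompose $\iota$ into a chain of simple coarsenings (\cite[Proposition 2.3]{MR4742271}) and use \lemref{DI} to carry the $K$-perversity hypothesis through the chain; you use the local models without this reduction. Second, in the fountain case the input to the argument is not merely the inequality \eqref{eq:AB} but a genuine two-cone homology formula (the paper's \lemref{lem:CalJoin}, proved by excising $\R^a\times\{\tu\}\times(\rc L\menos\{\tv\})$ and analyzing the triple $(\crG_{\ov q}B,\crG_{\ov q}C,\crG_{\ov q}D)$); asserting that the perversity discrepancy at $(\tu,\tv)$ is harmless does not substitute for that computation, and even granting it, the conclusion again only holds in a range until the homotopy detour above is performed.
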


\begin{proof}
From \cite[Proposition 2.3]{MR4742271}, the identity map
$(X,\cS)\to (X,\cT)$
can be decomposed in a sequence of simple CS-coarsenings (\defref{def:simple}).
So, with \lemref{DI}, we can reduce the problem to a simple CS-coarsening where
the two conical charts  are connected as detailed after the loc. cit. definition and recalled below.
The proof consists of the verifications of the hypotheses of \corref{cor:kingargument}.
The different systems of coefficients that appear are obtained by pullback of $\gE$;
 we do not mention them in the rest of the proof.

\smallskip
Let $x\in S$ be a  point in a singular stratum $S\in \cS$. We consider a $\cS$-conical chart containing $x$ as in \secref{sec:coarsening}.
We distinguish two cases. 

\medskip\noindent
$\bullet$ \emph{The stratum $S$ is a source stratum,} as described in  Diagram \eqref{equa:source}
and recalled for the convenience of the reader,
\begin{equation}\label{equa:source2}
\xymatrix{
(\R^a \times \rc L, \I_{\R^a} \times  \rc \mathcal L)  \ar[d]_{\iota} \ar[r]^-\psi  & (W,\mathcal S) \ar[d]^\iota\ \\
(\R^a \times \rc L, \I_{\R^a} \times \rc \mathcal L') \ar[r]^-\psi & (W,\mathcal T).
}
\end{equation}
Using \propref{prop:homotopyxR} and \corref{cor:kingargument}, we have to prove:\\
\emph{``If $
\crG(\iota)_\star \colon H_\star(\crG_{\ov p}( L, \cL)) 
\xrightarrow \cong
 H_\star(\crG_{\iota_\star \ov p}(L,  \mathcal L'))$
is an isomorphism, then so is \\
$
\crG(\iota)_\star \colon H_\star(\crG_{\ov p}( \rc L, \rc\cL))
\xrightarrow \cong 
H_\star(\crG_{\iota_\star \ov p}(\rc L,  \rc\mathcal L'))$.''}

\smallskip
Recall from  \lemref{LemTec} the equalities
  $D\iota_\star \ov p(\tv)= \iota_\star D\ov p(\tv)  = D \ov p(\tv) $, where $\tv$ is the apex of the cone $\rc L$.
From \eqref{equa:homologyGajercone}, we deduce
  isomorphisms and a surjection as follows:
$$
\begin{array}{ll}
\iota_\star \colon H_{k } (\crG_{\ov p}( \rc L,\rc \mathcal L) ) 
\xrightarrow{\cong} 
H_{k} (\crG_{\iota_\star \ov p}(\rc L,\rc \mathcal L')),
&
\text{if } k  \leq D \ov p (\tv), \text{ and} \\
\iota_\star \colon H_{k} (\crG_{\ov p}( \rc L,\rc \mathcal L) ) 
\twoheadrightarrow 
H_{k} (\crG_{\iota_\star \ov p}(\rc L,\rc \mathcal L')),
&
\text{if } k  =  D\ov p (\tv)+1.
\end{array}
$$
Let $x_{0}$ be any regular basepoint.
As the conical chart has no exceptional stratum,
  \lemref{lem:quillenfini}, Propositions \ref{lemapi1} and \ref{prop:noexceppi1} imply  isomorphisms and a surjection as
 $$
\begin{array}{ll}
\iota_\star \colon  \pi_{k}^{\ov p}( \rc L,\rc \mathcal L,x_0) 
\xrightarrow{\cong}
 \pi_{k}^{\iota_\star \ov p}(\rc L,\rc \mathcal L',x_0),
&
\text{if } k  \leq D\ov p (\tv), \text{ and}\\
\iota_\star \colon \pi_{k }^{\ov p}( \rc L,\rc \mathcal L ,x_0) 
\twoheadrightarrow 
\pi_{k}^{\iota_\star \ov p}(\rc L,\rc \mathcal L',x_0), 
&
\text{if } k  =  D \ov p (\tv) + 1.
\end{array}
$$
On the other hand, we know from \eqref{equa:cone} that 
$\pi_{\ell }^{\ov p}( \rc L,\rc \mathcal L,x_0) 
= 
\pi_{\ell}^{\iota_\star \ov p}(\rc L,\rc \mathcal L',x_0) =0$, if $\ell > D\ov p(\tv)$.
 We conclude that the map
$$
\iota_\star \colon  \pi_{\star}^{\ov p}( \rc L,\rc \mathcal L,x_0) 
\xrightarrow{\cong} 
\pi_{\star}^{\iota_\star \ov p}(\rc L,\rc \mathcal L',x_0) 
$$
is an isomorphism. So the classical Whitehead theorem gives the claim:
$$
\iota_\star \colon H_{\star} (\crG_{\ov p}( \rc L,\rc \mathcal L) ) 
\xrightarrow{\cong} 
H_{\star} (\crG_{\iota_\star \ov p}(\rc L,\rc \mathcal L')).
$$

\medskip\noindent
$\bullet$ \emph{The stratum $S$ is a fountain stratum}  
as described in Diagram \eqref{equa:virtual2} 
which we reproduce here:
 \begin{equation}\label{equa:virtual3}
\xymatrix{
(\R^a \times  (\rc \S^{b-1} \times \rc L), \cI_{\R^a} \times (\cI_{\rc\S^{b-1}} \times \rc \cL)_{(\tu,\tv)}) \ar[d]_-{\iota} \ar[r]^-{\phi'}  
&
 (W,\cS) \ar[d]^-{\iota}\ \\
((\R^a \times \rc \S^{b-1} )\times  \rc L, \cI_{\R^a\times \rc\S^{b-1}}  \times \rc \mathcal L) \ar[r]^-{\psi'} & (W,\cT).
}
\end{equation}
(Recall that $\tu$ is the apex of $\rc \S^{b-1}$ and $\tv$ that of $\rc L$.)
With \corref{cor:kingargument} and the involved perversities  described in \remref{rem:perversityonlink}, 
we have to prove:\\
\emph{ ``The  map 
 \begin{equation}\label{equa:virtualandhomology}
\iota \colon A:=(\R^a\times \rc \S^{b-1} \times \rc L, (\I_{\R^a\times \rc\S^{b-1}} \times \rc \cL)_{(\tu,\tv)}) \to 
B:=(\R^a\times \rc \S^{b-1} \times \rc L, \I_{\R^a\times \rc \S^{b-1}} \times \rc \cL)
\end{equation}
induces an isomorphism
 $\iota_\star \colon H_{\star }(\crG_{\ov p} A) 
\longrightarrow 
H_{\star} (\crG_{\iota_\star \ov p}B)$.'' }

\smallskip\noindent
We  now use a version of the cone formula 
\eqref{equa:homologyGajercone} 
for a product of two Gajer spaces on cones, its proof being postponed to \lemref{lem:CalJoin}. 
So, there are isomorphisms and a surjection as follows: 
\begin{equation}\label{equa:thetwocones}
\begin{array}{ll}
\iota_\star \colon H_{k }(\crG_{\ov p} A) 
\xrightarrow{\cong} 
H_{k} (\crG_{\iota_\star \ov p}B ),
&
\text{if } k \leq D\ov{p}(\{ \tu,\tv\} ), \text{ and}
\\
\iota_\star \colon H_{k }(\crG_{\ov p}A) 
 \twoheadrightarrow 
H_{k} (\crG_{\iota_\star \ov p}B), 
&
\text{if } k = D\ov{p}(\{ \tu,\tv\} )+1.
\end{array}
\end{equation}
 As the conical chart has no exceptional stratum, with \lemref{lem:quillenfini}, 
\propref{lemapi1} and \propref{prop:noexceppi1}, we also conclude with isomorphisms and a surjection: 
$$
\begin{array}{ll}
\iota_\star \colon \pi_{k }^{\ov p}( A,x_0) \xrightarrow \cong  
\pi_{k}^{\iota_\star \ov p}(B,x_0),
&
\text{if } k \leq D\ov{p}(\{ \tu,\tv\} ), \text{ and}
\\
\iota_\star \colon \pi_{k }^{\ov p}(A,x_0)
 \twoheadrightarrow 
\pi_{k}^{\iota_\star \ov p}(B,x_0),
&
\text{if } k = D\ov{p}(\{ \tu,\tv\} )+1,
\end{array}
$$
for any regular basepoint $x_0\in \R^a\times \rc \S^{b-1} \times (L\menos \Sigma_{L}) \times ]0,1[$.
On the other hand, we have  from \eqref{equa:cone}: 
$\pi_{\ell }^{\ov p}( A, x_0) = 0$ if $\ell > D\ov{p}(\{ \tu,\tv\} )$ 
and 
$\pi_{\ell}^{ \iota_{\star}\ov p}(B,x_0) = 0$ if $\ell > D\ov{p}(\{ \tv\} )$. 
So, since $D\ov{p}(\tv) \leq D\ov{p}(\{ \tu,\tv\} )$ (cf. \remref{rem:perversityonlink}), we obtain 
$\pi_{\ell }^{\ov p}( A,x_0) \cong \pi_{\ell}^{ \iota_{\star}\ov p}(B,x_0) =0$ if $\ell >  D\ov{p}(\{ \tu,\tv\} )$
and the map
$$
\iota_\star \colon  \pi_{\star}^{\ov p}( A,x_0) \xrightarrow \cong \pi_{\star}^{\iota_{\star}\ov p}(B,x_0) 
$$
is an isomorphism for any regular basepoint $x_0$. Finally,
the classical Whitehead theorem gives the claim.
\end{proof}

We prove the formula for two cones \eqref{equa:thetwocones} used in the proof of \thmref{thm:Coars}.

\begin{lemma} \label{lem:CalJoin}
Let $(L,\cL)$ be a compact stratified space and $b>0$ be an integer. 
We denote by $\tu$ the apex of $\rc \S^{b-1}$ and by $\tv$ that of $\rc L$.
Then, for any $K$-perversity  $\ov p$  on $A$, the  stratified map defined by the identity,
$$
\iota \colon 
A:=(\R^a\times\rc \S^{b-1} \times \rc L, \cI_{\R^a}\times (\I_{\rc \S^{b-1}} \times \rc \cL)_{(\tu,\tv)})
\to 
B:=(\R^a\times\rc \S^{b-1} \times \rc L,  \I_{\R^a\times \rc\S^{b-1}} \times \rc \cL),
$$
induces
 $$
 \left\{
\begin{array}{lll}
\hbox{isomorphisms} &
\iota_{\star} \colon H_{k }(\crG_{\ov p} A) 
  \xrightarrow{\cong} 
H_{k} (\crG_{\iota_\star \ov p}B)
&
\hbox{if } k \leq D\ov{p}(\{ \tu,\tv\} ), \hbox{ and}
\\
\hbox{an epimorphism} & 
\iota_{\star} \colon H_{k }(\crG_{\ov p}A) 
 \twoheadrightarrow 
H_{k} (\crG_{\iota_\star \ov p}B) 
&
\hbox{if } k = D\ov{p}(\{ \tu,\tv\} )+1.
\end{array}
\right.
$$
\end{lemma}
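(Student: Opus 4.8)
The plan is to peel off contractible factors, rewrite both sides as open cones, and then compare them through a Mayer--Vietoris argument for the join; only the last point is really substantial.

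\textbf{Step 1 (removing $\R^a$).} Both $A$ and $B$ are products of $\R^a$ with $A_0=(\rc\S^{b-1}\times\rc L,(\I\times\rc\cL)_{(\tu,\tv)})$ and $B_0=(\rc\S^{b-1}\times\rc L,\I\times\I\times\rc\cL)$, and the perversities are of product type. By \propref{prop:homotopyxR} one has $\crG_{\ov p}A\cong\Sing\R^a\times\crG_{\ov p}A_0$ and likewise for $B$; since $\Sing\R^a$ is contractible, it suffices to treat $\iota\colon A_0\to B_0$.

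\textbf{Step 2 (the two cone presentations).} The stratified homeomorphism $h$ of \eqref{equa:hoeoconejoin} identifies $A_0$ with the open cone $\rc(\S^{b-1}*L)$ carrying its cone stratification $\rc(\S^{b-1}*\cL)$, the apex $\tw$ corresponding to $(\tu,\tv)$; hence $D\ov p(\tw)=D\ov p(\{\tu,\tv\})$ and, by the cone formula \eqref{equa:homologyGajercone}--\eqref{equa:conegajer}, the inclusion $\S^{b-1}*L\hookrightarrow A_0$ induces isomorphisms $H_k(\crG_{\ov p}(\S^{b-1}*L))\xrightarrow{\cong}H_k(\crG_{\ov p}A_0)$ for $k\le D\ov p(\{\tu,\tv\})$ and an epimorphism for $k=D\ov p(\{\tu,\tv\})+1$. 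On the side of $B_0$ the factor $\rc\S^{b-1}$ is unstratified and contractible, so \propref{prop:homotopyxR} yields $H_\star(\crG_{\iota_\star\ov p}B_0)\cong H_\star(\crG_{\ov q}\rc L)$, where $\ov q$ is the perversity induced on $(\rc L,\rc\cL)$ (so that $D\ov q(\tv)\le D\ov p(\{\tu,\tv\})\le D\ov q(\tv)+b$ by \eqref{eq:AB}); and, through these identifications, $\crG(\iota)$ is induced by $h$ followed by the projection $\rc\S^{b-1}\times\rc L\to\rc L$.

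\textbf{Step 3 (a Mayer--Vietoris for the join).} It remains to show that $h$ composed with the projection induces $H_k(\crG_{\ov p}(\S^{b-1}*L))\to H_k(\crG_{\ov q}\rc L)$, an isomorphism for $k\le D\ov p(\{\tu,\tv\})$ and an epimorphism for $k=D\ov p(\{\tu,\tv\})+1$. I would cover $\S^{b-1}*L$ by an open neighborhood $\mathcal U$ of the singular stratum $\S^{b-1}$, stratified homeomorphic to $\S^{b-1}\times\rc L$ with $\S^{b-1}$ an unstratified factor and the $\rc L$--direction the link of $\S^{b-1}$ (whose apex carries the perversity value of $\S^{b-1}$, which equals $\ov q(\tv)$), and by an open neighborhood $\mathcal V$ of the complementary ``$L$--end'', stratified homeomorphic to $L\times\rc\S^{b-1}$ with $\rc\S^{b-1}$ unstratified (so $L\hookrightarrow\mathcal V$ is a Gajer equivalence by \propref{prop:homotopyxR}); then $\mathcal U\cap\mathcal V$ is stratified homeomorphic to $\S^{b-1}\times L\times\,]0,1[$. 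Stripping the unstratified factors via \propref{prop:homotopyxR}, applying the cone formula \eqref{equa:homologyGajercone} to $\mathcal U$, and plugging everything into the Mayer--Vietoris sequence of \cite[Theorem 3.13]{CST9}, one expresses $H_\star(\crG_{\ov p}(\S^{b-1}*L))$ out of $H_\star(\crG_{\ov q}L)$, $H_\star(\crG_{\ov q}\rc L)$ and the homology of $\S^{b-1}$. Since the latter is concentrated in degrees $0$ and $b-1$, the inequality $D\ov p(\{\tu,\tv\})\le D\ov q(\tv)+b$ of \eqref{eq:AB} ensures that any term twisted by the degree $b-1$ class enters $H_\star(\crG_{\ov q}\rc L)$ only in degrees $\le D\ov q(\tv)+1$, i.e.\ where the cone map of $\rc L$ is at least an epimorphism; a diagram chase then kills these terms, shows the relevant connecting morphisms vanish for $k\le D\ov p(\{\tu,\tv\})$, and identifies $H_k(\crG_{\ov p}(\S^{b-1}*L))$ with $H_k(\crG_{\ov q}\rc L)$ compatibly with the map of Step 2. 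Chaining Steps 2 and 3 gives the statement (with coefficients in a local system the only extra point is that the sphere factor is simply connected, or $b\le 2$ is handled directly).

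\textbf{Expected obstacle.} The delicate part is Step 3: producing the tubular neighborhoods $\mathcal U$ and $\mathcal V$ with their product structures (in which the homology-sphere factor must be treated as a single unstratified stratum) by unpacking the geometry of the join as in \exemref{exam:joint}, and then tracking the Mayer--Vietoris maps \emph{degree by degree} in the upper band $D\ov q(\tv)<k\le D\ov p(\{\tu,\tv\})$, where the cone formula for $\rc L$ is no longer an isomorphism; the computation only closes up because the slack ``$+b$'' in \eqref{eq:AB} is exactly balanced against the degree $b-1$ of the top homology class of $\S^{b-1}$.
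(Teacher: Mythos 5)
Your argument is correct in outline and the degree bookkeeping does close, but it follows a genuinely different route from the paper. You transfer everything to the bases of the cones (the join $\S^{b-1}*L$ on the $A$ side, $\rc L$ on the $B$ side, each via the cone formula \eqref{equa:homologyGajercone}) and then compute $H_\star(\crG_{\ov p}(\S^{b-1}*L))$ by a Mayer--Vietoris cover of the join by a conical neighborhood of the stratum $\S^{b-1}$ and a collar of the $L$--end. The paper never decomposes the join: it stays in the product model and introduces the two open subsets $C=A\menos\{(\tu,\tv)\}$ and $D=\R^a\times\rc\S^{b-1}\times(\rc L\menos\{\tv\})$, on which $\cS$ and $\cT$ (and $\ov p$, $\iota_\star\ov p$) coincide. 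Excising $\R^a\times\{\tu\}\times(\rc L\menos\{\tv\})$ from $(C,D)$ splits $H_k(\crG_{\ov q}C,\crG_{\ov q}D)$ as $H_k(\crG_{\ov q}B,\crG_{\ov q}D)\oplus H_{k-b+1}(\crG_{\ov q}\rc L,\crG_{\ov q}(\rc L\menos\{\tv\}))$, so the long exact sequence of the triple $(B,C,D)$ breaks into short exact sequences and \eqref{eq:AB} kills the shifted summand in the range $k\leq D\ov p(\{\tu,\tv\})+1$, giving $H_k(\crG_{\ov q}B,\crG_{\ov q}C)=0$ there; the conclusion is then a 5-lemma comparison of the long exact sequences of the pairs $(A,C)$ and $(B,C)$, the homeomorphism $h$ of \eqref{equa:hoeoconejoin} entering only to get $H_k(\crG_{\ov p}A,\crG_{\ov p}C)=0$ in the same range. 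Both proofs ultimately rest on the same two inputs --- the cone formula \eqref{equa:conegajer} and the slack $b$ in \eqref{eq:AB} balanced against the class in degree $b-1$ of the homology sphere --- and your identification $D\ov p(\S^{b-1})=D\ov q(\tv)$ for the apex of the $\rc L$--factor of $\mathcal U$ is the correct (and necessary) perversity bookkeeping. What your version costs is the construction of the two product neighborhoods inside the join and the verification that the Mayer--Vietoris maps are compatible with $h$, the cone inclusions and the projection to $\rc L$; what the paper's version costs is the excision computation, but it confines all comparison of maps to a single 5-lemma diagram. One small correction: for local coefficients pulled back from $\crG_{\iota_\star\ov p}B$ the system is constant along the $\S^{b-1}$--factor, so the K\"unneth splitting you need holds for every $b$ --- neither simple connectivity of $\S^{b-1}$ nor a separate case $b\leq 2$ is required (and the paper's \eqref{equa:excision} uses exactly the same splitting).
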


\begin{proof}
For  sake of simplicity, we set $\ov q=\iota_\star \ov p$.
From \eqref{equa:gajermap}, we know that $\iota$ induces a homomorphism
$\iota_{\star} \colon H_{\star}(\crG_{\ov p}A) \to H_{\star}(\crG_{\ov q}B)$.
We consider the two stratified sets
\begin{eqnarray*}
C&:=&  (\R^a\times(\rc \S^{b-1} \times \rc L )\menos (\R^a\times \{(\tu,\tv)\}), \I_{\R^a}\times 
(\I_{\rc \S^{b-1}} \times \rc \cL)_{(\tu,\tv)}) \\
&=&
  (\R^a\times(\rc \S^{b-1} \times \rc L) \menos (\R^a\times \{(\tu,\tv)\}, \I_{\R^a\times \rc\S^{b-1}} \times \rc \cL)
\end{eqnarray*}
and
\begin{eqnarray*}
D&:=&  (\R^a\times\rc \S^{b-1} \times (\rc L \menos \{\tv\}), \I_{\R^a}\times ( \I_{\rc\S^{b-1}} \times \rc \cL)_{(\tu,\tv)}) \\
&=&
   (\R^a\times\rc \S^{b-1} \times (\rc L \menos \{\tv\}), \I_{\R^a\times\rc\S^{b-1}} \times \rc \cL).
\end{eqnarray*}
The first step of the proof is the determination of $H_{{\star}}(\crG_{\ov{q}}B,\crG_{\ov{q}}C)$. 
To do this, we first use the excision theorem (\cite[Theorem 3.14]{CST9}) of  
$Z = \R^a\times\{ \tu\} \times (\rc L \menos \{\tv\})$ relatively to the pair $(C,D)$. 
So the classical {relative} homology verifies:
\begin{eqnarray*}
H_{k}(\crG_{\ov{q}}C,\crG_{\ov{q}}D)
&\cong & 
H_{k}(\crG_{\ov{q}}(C\menos Z) ,\crG_{\ov{q}}(D\menos Z))\\
&\cong&  
H_{k}(\crG_{\ov{q}}(\R^a\times (\rc \S^{b-1} \menos \{\tu\} ) \times \rc L ) , \crG_{\ov{q}}( \R^a\times (\rc \S^{b-1} \menos \{\tu\} ) \times (\rc L \menos \{ \tv\}) ) ).
\end{eqnarray*}
The stratified space $(\R^a\times \rc \S^{b-1}\menos \{\tu\},\cI_{\R^a\times \rc \S^{b-1}\menos \{\tu\}})$ 
is stratified homeomorphic to 
$\R^a\times\S^{b-1}\times ]0,1[$ with the trivial filtration. From \propref{prop:homotopyxR}, we get
\begin{eqnarray} 
H_{k}(\crG_{\ov{q}}C,\crG_{\ov{q}}D) 
&\cong&
\nonumber
H_{k}(\Sing\, \S^{b-1}\times  \crG_{\ov{q}}( \rc L ) , 
\Sing\, \S^{b-1}\times \crG_{\ov{q}}( \rc L \menos \{ \tv\})  )\\ \nonumber
&\cong&
H_{k}(\crG_{\ov{q}}( \rc L ), \crG_{\ov{q}}( \rc L \menos \{ \tv\})  )
\oplus
H_{k-b+1}(\crG_{\ov{q}}(  \rc L ) , \crG_{\ov{q}}(\rc L \menos \{ \tv\}) )\\
&\cong&
H_{k}(\crG_{\ov{q}}B, \crG_{\ov{q}}D  )
\oplus
H_{k-b+1}(\crG_{\ov{q}}B, \crG_{\ov{q}}D  ).\label{equa:excision}
\end{eqnarray}
Thus, for each $k\in \N$, the  inclusion $ C \hookrightarrow B$ induces an epimorphism
$\nu_k \colon H_{k}(\crG_{\ov{q}}C, \crG_{\ov{q}}D  ) \twoheadrightarrow 
H_{k}(\crG_{\ov{q}}B,\crG_{\ov{q}}D)
$.
So, for \emph{any} $k$, the long exact  sequence of the triple $(\crG_{\ov{q}}B,\crG_{\ov{q}}C,\crG_{\ov{q}}D)$,
breaks down in short exacts sequences,
\begin{equation}\label{equa:shortandtop}
0
\to
H_{k+1}(\crG_{\ov{q}}B, \crG_{\ov{q}}C)
\to
H_{k}(\crG_{\ov{q}}C,\crG_{\ov{q}}D)
\xrightarrow{\nu_{k}}
H_{k}(\crG_{\ov{q}}B,\crG_{\ov{q}}D)
\to
0.
\end{equation}
Given $k\leq D\ov p(\{\tu,\tv\})$, we deduce from \eqref{eq:AB}:
$$k-b+1\leq D\ov p(\{\tu,\tv\})-b+1\leq D\ov q(\tv)+b-b+1=D\ov q(\{\tv\})+1.$$
Thus,  the cone formula \eqref{equa:conegajer} and the isomorphism \eqref{equa:excision} imply that $\nu_{k}$
is an isomorphism and therefore,
 we have for the  classical relative homology:
\begin{equation}\label{equa:relatifexcision}
H_{k}(\crG_{\ov{q}}B, \crG_{\ov{q}}C)=0, \text{ if } k\leq D\ov q(\{\tu,\tv\}) +1.
\end{equation}
The identity map $\iota\colon A\to B$ induces morphisms 
 between the homology long exact sequences of
the pairs $(\crG_{\ov{p}}A,\crG_{\ov{p}}C)$ and $(\crG_{\ov{q}}B,\crG_{\ov{q}}C)$. We
denote them by $\iota_{\star}^i$, for $i=1,2,3$, as follows:
$$\xymatrix{
\dots\ar[r]
&
H_{k+1}(\crG_{\ov{p}}A,\crG_{\ov{p}}C)\ar[r]\ar[d]^-{\iota^3_{k+1}}
&
H_{k}(\crG_{\ov{p}}C)\ar[r]\ar[d]^-{\iota^1_{k}}
&
H_{k}(\crG_{\ov{p}}A)\ar[r]\ar[d]^-{\iota^2_{k}}
&
H_{k}(\crG_{\ov{p}}A,\crG_{\ov{p}}C)\ar[d]^-{\iota^3_{k}}\ar[r]
&\dots \\
\dots\ar[r]
&
H_{k+1}(\crG_{\ov{q}}B,\crG_{\ov{q}}C)\ar[r]
&
H_{k}(\crG_{\ov{q}}C)\ar[r]
&
H_{k}(\crG_{\ov{q}}B)\ar[r]
&
H_{k}(\crG_{\ov{q}}B,\crG_{\ov{q}}C)\ar[r]
& \dots
	}
	$$
	Recall from \remref{rem:perversityonlink} that the two filtrations coincide on
	$\R^a\times (\tc \S^{b-1}\times \rc L)\menos (\R^a\times \{(\tu,\tv)\})$.
	So the two perversities also coincide on it and the
	map $\iota^1_k$ is an isomorphism for each $k$.
	Applying \eqref{equa:conegajer} together 
	with  the stratified homeomorphism $h$  \eqref{equa:hoeoconejoin}, we get
	$
	H_{k}(\crG_{\ov{p}}A,\crG_{\ov{p}}C)=0
	$
	for each $k \leq D\ov p(\{ \tu , \tv\}) + 1$.
So, with the 5-lemma and \eqref{equa:relatifexcision}, we conclude that 
$\iota^2_k \colon H_k(\crG_{\ov p} A) \to H_k(\crG_{\ov p} B)$ is an isomorphism for $k \leq D\ov p(\{\tu ,\tv\})$ 
	and an epimorphism for $k = D\ov p(\{\tu ,\tv\})+1 $.
\end{proof}

\begin{theoremb}\label{thm:CoarsThom}
Let $(X,\cS,\cT)$ be a CS-coarsening.
Let  $\ov{p}$ be a  $K$-perversity of pushforward perversity $\iota_{\star}\ov p$, such that $\ov p\leq \ov t$ and 
$\pi_{1}^{\ov p}(\rc L,\rc\cS,x_{0})=\{1\}$
for the  links $L$ of every exceptional stratum,  with $x_{0}$ regular.
Then, for 
any local coefficient system $\gE$ on $\crG_{\iota_\star \ov p}( X, \cT)$, the identity map induces an isomorphism
$$
\crG(\iota)_\star \colon H_\star(\crG_{\ov p}( X, \cS);\iota^\star\gE )  
\xrightarrow{\cong}
 H_\star(\crG_{\iota_\star \ov p}( X, \cT);\gE ). 
$$
 \end{theoremb}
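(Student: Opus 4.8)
The plan is to run the scheme of the proof of \thmref{thm:Coars}, enlarging its case analysis to cover exceptional strata. First one checks that $\crG(\iota)$ of \eqref{equa:gajermap} is defined (from $\ov p\le\ov t$ one gets $D\iota_{\star}\ov p=\iota_{\star}D\ov p\ge\ov 0$ by \lemref{LemTec}), and then, via \cite[Proposition 2.3]{MR4742271} and \lemref{DI}, one reduces to the case where $(X,\cS,\cT)$ is a \emph{simple} CS-coarsening: along such a decomposition the intermediate perversities remain $K$-perversities with $\ov p\le\ov t$ preserved (\lemref{LemTec}), and the cone of the link of an exceptional stratum occurring at an intermediate step is, up to stratified homeomorphism, a piece of the cone of the link of an original exceptional stratum, so the hypothesis $\pi_{1}^{\ov p}(\rc L,\rc\cL)=\{1\}$ propagates. (I expect this bookkeeping to be routine but to rely on the Thom--Mather framework of \thmref{thm:coarseningwithexcep}, which is also what makes \propref{prop:pi1ThomM} available below.)

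Next one applies \propref{prop:Met1} to $F_{\star}(-)=H_{\star}(\crG_{\ov p}(-,\cS);\iota^{\star}\gE)$, $G_{\star}(-)=H_{\star}(\crG_{\iota_{\star}\ov p}(-,\cT);\gE)$ and the natural transformation $\crG(\iota)_{\star}$. Conditions (ii) and (iv) are immediate, condition (i) is the Mayer--Vietoris sequence of \cite[Theorem 3.13]{CST9}, and only (iii) requires work. Given a singular point $x\in S$, $S\in\cS$, with a small conical chart as in \secref{sec:coarsening}, there are three cases, according to whether $S$ is a source, a fountain, or an exceptional stratum. The source and fountain cases are treated exactly as in \thmref{thm:Coars}: one removes the Euclidean factor with \propref{prop:homotopyxR}, uses the cone formulas \eqref{equa:homologyGajercone}--\eqref{equa:conegajer} (respectively \lemref{lem:CalJoin}) to reduce (iii) to a homology statement on the link, and then, via \lemref{lem:quillenfini} together with \propref{lemapi1} for $\pi_{0}$ and the $\pi_{1}$-isomorphism on the link, produces a homotopy equivalence on the link and hence the required homology isomorphism on the cone. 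The one modification is that the link may now itself carry exceptional strata, so the needed $\pi_{1}$-isomorphism is supplied by \propref{prop:pi1ThomM} (using the transported $\pi_{1}$-hypothesis) rather than by \propref{prop:noexceppi1}.

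The genuinely new point is the \emph{exceptional case}, given by Diagram \eqref{equa:exceptional}: there $(W,\cS)$ is stratified homeomorphic to $(\R^{a}\times\rc\S^{b-1},\cI\times\rc\cI)$ with singular stratum $S=\R^{a}\times\{\tu\}$, while $(W,\cT)$ is all regular, so $\crG_{\iota_{\star}\ov p}(W,\cT)=\Sing W$ is contractible; by \propref{prop:homotopyxR}, $\crG_{\ov p}(W,\cS)\cong\Sing(\R^{a})\times\crG_{\ov p}(\rc\S^{b-1})$ is homotopy equivalent to $\crG_{\ov p}(\rc\S^{b-1})$, so (iii) reduces to showing that $\crG_{\ov p}(\rc\S^{b-1},\rc\cI)$ is acyclic for every local system, and since its $\pi_{0}$ is trivial it suffices to prove it weakly contractible. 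The cone formula \eqref{equa:cone} gives $\pi_{\ell}^{\ov p}(\rc\S^{b-1})=0$ for $\ell>D\ov p(\tu)$ and $\pi_{\ell}^{\ov p}(\rc\S^{b-1})=\pi_{\ell}(\S^{b-1})$ for $\ell\le D\ov p(\tu)$. Since $\ov p$ is a $K$-perversity there are no $1$-exceptional strata, so $b-1\ge1$, and $0\le\ov p(S)$, whence $D\ov p(\tu)=\ov t(S)-\ov p(S)\le\ov t(S)=b-2$. If $D\ov p(\tu)=0$ we are done; if $D\ov p(\tu)\ge1$ the hypothesis forces $\pi_{1}(\S^{b-1})=\pi_{1}^{\ov p}(\rc\S^{b-1})=\{1\}$, so $\S^{b-1}$ is a simply connected homology sphere and iterated Hurewicz gives $\pi_{\ell}(\S^{b-1})=0$ for all $\ell<b-1$, in particular for every $\ell\le D\ov p(\tu)\le b-2$. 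In all cases every homotopy group of $\crG_{\ov p}(\rc\S^{b-1})$ vanishes, $\crG(\iota)$ is a weak equivalence between weakly contractible Kan simplicial sets, and therefore induces a homology isomorphism with arbitrary local coefficients; this establishes (iii), and \propref{prop:Met1} yields the theorem.

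The main obstacle is precisely this exceptional verification: it works only because the two hypotheses cooperate --- the $K$-perversity inequality $\ov p(S)\ge0$ caps $D\ov p(\tu)$ strictly below $\dim\S^{b-1}=b-1$, while the $\pi_{1}$-triviality, fed into Hurewicz, kills \emph{all} the homotopy of the homology sphere $\S^{b-1}$ in that range, so that the cone over it becomes $\ov p$-acyclic exactly where it needs to be. A secondary delicate point is keeping the $\pi_{1}$-isomorphism available for the links arising in the source and fountain cases even when those links themselves contain exceptional strata, which is why \propref{prop:pi1ThomM}, and with it the Thom--Mather hypothesis, enters the argument.
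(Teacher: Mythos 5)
Your treatment of the exceptional case --- the only genuinely new point of this theorem relative to \thmref{thm:Coars} --- is exactly the paper's argument: reduce via Diagram \eqref{equa:exceptional} and \propref{prop:homotopyxR} to the weak contractibility of $\crG_{\ov p}(\rc\S^{b-1},\rc\cI)$, then combine the cone formula \eqref{equa:cone}, the bound $0\leq D\ov p(\tu)\leq b-2$ coming from \eqref{eq:regS} applied to the $K$-perversity $D\ov p$, and the $\pi_{1}$-hypothesis fed into Hurewicz to kill all homotopy of the homology sphere in the relevant range. This part is correct and matches the paper.

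The deviation is in the source and fountain cases. The paper disposes of them with the single sentence that they ``were dealt with in the proof of \thmref{thm:Coars}''; you instead observe that the argument there invokes \propref{prop:noexceppi1} for the $\pi_{1}$-isomorphism on the cone of the link, which is only available when the chart contains no exceptional strata, and you repair this by calling \propref{prop:pi1ThomM}. The worry you raise is legitimate, but your repair imports the hypotheses of \propref{prop:pi1ThomM} --- normal, connected, pre-Thom-Mather, finitely many strata --- none of which appear in the statement of \thmref{thm:CoarsThom} (they belong to \thmref{thm:coarseningwithexcep}). As written, your argument therefore proves the theorem only under those additional assumptions; the same remark applies to your propagation of the $\pi_{1}$-hypothesis through the decomposition into simple coarsenings, which you explicitly route through the Thom--Mather framework. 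Since the theorem is only applied in the paper under the Thom--Mather hypotheses, nothing downstream breaks, but to prove the statement in its stated generality you would need either to check that in the simple coarsenings produced by \cite[Proposition 2.3]{MR4742271} the links of source and fountain strata carry no exceptional strata, or to supply the $\pi_{1}$-isomorphism on such links by an argument not relying on \propref{prop:pi1ThomM}.
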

 \begin{proof}
 We follow the process of the proof of \thmref{thm:Coars} using the decomposition in simple coarsenings
 established in \cite[Proposition 2.3]{MR4742271}. 
 In particular,  the proof reduces to an application of \corref{cor:kingargument}. 
Let $x\in S$ be a  point in a singular stratum $S\in \cS$. 
We may consider a small enough $\cS$-conical charts containing $x$ and suppose 
that the stratum $S$ is exceptional in $(X,\cS,\cT)$, since the two other cases 
were dealt with in  the proof of \thmref{thm:Coars}. 
Let us recall the local situation \eqref{equa:exceptional} in the diagram:
\begin{equation}\label{equa:exceptional2}
\xymatrix{
(\R^a \times \rc \S^{b-1}, \I_{\R^a} \times \rc \mathcal I_{\S^{b-1}})  \ar[d]_{f}\ar[r]^-\phi  & (W,\mathcal S) \ar[d]^\iota\ \\
(\R^{a+b}, \I_{\R^{a+b}} ) \ar[r]^-\psi & (W,\mathcal T).
}
\end{equation}
Notice that $b\geq 2$ since $\ov p$ is a $K$-perversity (cf. \remref{rem:propertyKperversity}.(ii)). 
By using \propref{prop:homotopyxR} and \corref{cor:kingargument}, the proof reduces to the following claim:
\begin{equation}\label{equa:exceptionalfinal}
H_{\star}(\crG_{\ov{p}}(\rc\S^{b-1},\rc \cI_{\S^{b-1}}))\cong H_{\star}(\crG_{\ov p}(\R^b,\I_{\R^b})).
\end{equation}
The cone formula \eqref{equa:cone} gives
\begin{equation}\label{equa:conesphere}
\pi_{k}^{\ov p}(\rc \S^{b-1},\rc\I_{\S^{b-1}})=\left\{
\begin{array}{cl}
\pi_{k}(\S^{b-1})&\text{if }  k\leq D\ov p(\tv),\\
0&\text{if }k>D\ov p(\tv).
\end{array}
\right.
\end{equation}
The hypothesis $\pi_{1}^{\ov p}(\rc \S^{b-1},\rc\I_{\S^{b-1}})=0$ implies that
the homological sphere $\S^{b-1}$ is a homotopy sphere $S^{b-1}$ if $1\leq  D\ov p(\tv)$.
From \eqref{eq:regS} applied to $D\ov p$, we have $D\ov p(\tv)\leq b-2$
and the equality \eqref{equa:conesphere} implies
$\pi_{k}^{\ov p}(\rc \S^{b-1},\rc\I_{\S^{b-1}})=0$
for any $k$,  which implies \eqref{equa:exceptionalfinal}.
\end{proof}

 \begin{proof}[Proof of \thmref{thm:coarseningnoexcep}]
 We apply \lemref{lem:quillenfini}.
 The isomorphisms on $\pi_{0}$ and $\pi_{1}$ have been 
 established in  Propositions \ref{lemapi1} and \ref{prop:noexceppi1}.
The isomorphisms in homology are in \thmref{thm:Coars}.
 \end{proof}
 
 \begin{proof}[Proof of \thmref{thm:coarseningwithexcep}]
It is a proof similar to that of \thmref{thm:coarseningnoexcep}. The only modifications are the replacement of 
\propref{prop:noexceppi1} by \propref{prop:pi1ThomM} and of \thmref{thm:Coars} by \thmref{thm:CoarsThom}.
 \end{proof}

\section{Refinement and GM-perversities}\label{sec:refinementexamples}

\begin{quote}
Let $(X,\cS,\cT)$ be a CS-coarsening. In this section, we use 
Theorems~\ref{thm:coarseningnoexcep} and \ref{thm:coarseningwithexcep}
to get isomorphisms on intersection homotopy groups, with a perversity $\ov q$ on $(X,\cT)$
and its pullback $\iota^{\star}\ov q$ on $(X,\cS)$.
We also  show how the GM-perversities fit in our setting.
\end{quote}

  \begin{theoremb}[\bf Invariance by refinement]\label{thm:raffinement}
 Let $(X,\cS,\cT,x_{0})$ be a pointed CS-coars\-ening  without exceptional strata, 
  and let $\ov q$ be a perversity on $(X,\cT)$ with   $\ov q \leq \ov t$, of pullback perversity $\iota^{\star}\ov q$. 
  Then,  the identity map induces an isomorphism
$$
\pi_\star^{\iota^\star\ov q}(X,\cS,x_0)  \cong \pi_\star^{\ov q}(X,\cT,x_0).
$$
  \end{theoremb}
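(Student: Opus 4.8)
The plan is to deduce this statement directly from \thmref{thm:coarseningnoexcep} by taking as perversity on $(X,\cS)$ the pullback $\ov p:=\iota^\star\ov q$. The first step is to verify that $\ov p$ meets the hypotheses of \thmref{thm:coarseningnoexcep}. Since the CS-coarsening $(X,\cS,\cT)$ has no exceptional strata at all, it has in particular no $1$-exceptional strata, so \lemref{Dback} applies and tells us that $\iota^\star\ov q$ is a $K$-perversity on $(X,\cS,\cT)$; the same lemma yields $\iota^\star\ov q\leq\ov t$ from the hypothesis $\ov q\leq\ov t$. Thus $\ov p=\iota^\star\ov q$ is a $K$-perversity on $(X,\cS,\cT)$ with $\ov p\leq\ov t$, which is exactly the input required by \thmref{thm:coarseningnoexcep}.

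The second step is to identify the pushforward perversity of $\ov p$. By the identity $\iota_\star\iota^\star\ov q=\ov q$ recorded after \defref{def:pushback}, the pushforward perversity $\iota_\star\ov p$ on $\cT$ is precisely $\ov q$. Applying \thmref{thm:coarseningnoexcep} to the CS-coarsening $(X,\cS,\cT)$ without exceptional strata, to the $K$-perversity $\ov p\leq\ov t$, and to the regular basepoint $x_0$, we obtain the isomorphism induced by the identity map,
$$
\pi_\star^{\iota^\star\ov q}(X,\cS,x_0)=\pi_\star^{\ov p}(X,\cS,x_0)\xrightarrow{\ \cong\ }\pi_\star^{\iota_\star\ov p}(X,\cT,x_0)=\pi_\star^{\ov q}(X,\cT,x_0),
$$
which is the claimed statement.

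Since the result is a straightforward combination of \lemref{Dback} and \thmref{thm:coarseningnoexcep}, there is no real obstacle here; the proof is essentially bookkeeping on hypotheses. The only point deserving a word of care is that the absence of exceptional strata is exactly what keeps us within the scope of \thmref{thm:coarseningnoexcep} (rather than forcing the Thom--Mather hypotheses of \thmref{thm:coarseningwithexcep}), whereas \lemref{Dback} itself only needs the weaker absence of $1$-exceptional strata. The corresponding Thom--Mather version, \thmref{thm:raffinementThom}, will be obtained in the same way by substituting \thmref{thm:coarseningwithexcep} for \thmref{thm:coarseningnoexcep}.
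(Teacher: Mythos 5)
Your proposal is correct and coincides with the paper's own proof: both apply \lemref{Dback} to see that $\ov p:=\iota^\star\ov q$ is a $K$-perversity with $\ov p\leq\ov t$, use the identity $\iota_\star\iota^\star\ov q=\ov q$ noted after \defref{def:pushback}, and conclude by \thmref{thm:coarseningnoexcep}. No gaps.
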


\begin{proof}
From \lemref{Dback}, we know that $\iota^\star \ov q$ is a $K$-perversity and $\iota^\star\ov q\leq\ov t$.
So, the perversity $\ov p:=\iota^\star\ov q$ satisfies the hypotheses of \thmref{thm:coarseningnoexcep}.
The  result now follows from  $\iota_\star \ov p = \iota_\star \iota^\star \ov q =\ov q$,
as was noticed after \defref{def:pushback}. 
\end{proof}

With the same proof than in \thmref{thm:coarseningwithexcep}, we also get  
a refinement theorem in the case of a Thom-Mather space with some exceptional strata.

\begin{theoremb}[\bf Invariance by refinement]\label{thm:raffinementThom}
Let $(X,\cS)$ be a normal connected Thom Mather space   with a finite number of strata and $(X,\cS,\cT)$ be a 
 CS-coarsening   without exceptional strata of codimension 1.
Let $\ov q$ be a perversity on $(X,\cT)$ with   $\ov q \leq \ov t$,
of pullback  perversity $\iota^{\star}\ov q$, such that  
$\pi_{1}^{\iota^\star\ov q}(\rc L,\rc\cS,x_{0})=\{1\}$
for the links $L$ 
of any exceptional stratum  with $x_{0}$ regular.
Then, the identity map induces an isomorphism
$$\pi_{\star}^{\iota^\star\ov q}(X,\cS,x_{0})\cong \pi_{\star}^{\ov{q}}(X,\cT,x_{0}).$$
\end{theoremb}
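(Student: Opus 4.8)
The plan is to follow the proof of \thmref{thm:raffinement} line by line, the only change being that the invariance result invoked at the end is \thmref{thm:coarseningwithexcep} (the version allowing exceptional strata in a Thom-Mather space) rather than \thmref{thm:coarseningnoexcep}.

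First I would set $\ov p:=\iota^\star\ov q$ and verify that $\ov p$ meets the hypotheses of \thmref{thm:coarseningwithexcep}. Since $(X,\cS,\cT)$ has no $1$-exceptional strata, \lemref{Dback} immediately gives that $\ov p$ is a $K$-perversity on $(X,\cS,\cT)$ and that $\ov q\leq\ov t$ implies $\ov p=\iota^\star\ov q\leq\ov t$. The remaining assumptions required by \thmref{thm:coarseningwithexcep}---that $(X,\cS)$ be a normal connected Thom-Mather space with finitely many strata---are exactly the standing hypotheses of the present statement. Finally, the local link condition demanded by \thmref{thm:coarseningwithexcep}, namely $\pi_{1}^{\ov p}(\rc L,\rc\cS)=\{1\}$ for the link $L$ of any exceptional stratum, is precisely the hypothesis $\pi_{1}^{\iota^\star\ov q}(\rc L,\rc\cS,x_{0})=\{1\}$ assumed here, once one notes (as is customary for connected spaces throughout the paper) that the choice of basepoint is irrelevant.

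Having checked all hypotheses, \thmref{thm:coarseningwithexcep} yields an isomorphism $\pi_{\star}^{\ov p}(X,\cS,x_{0})\cong\pi_{\star}^{\iota_{\star}\ov p}(X,\cT,x_{0})$ induced by the identity map. To finish, I would identify the target perversity using the identity $\varphi_{\star}\varphi^\star\ov q=\ov q$ recorded after \defref{def:pushback}: applied to $\iota$ it gives $\iota_{\star}\ov p=\iota_{\star}\iota^\star\ov q=\ov q$, and substituting yields the asserted isomorphism $\pi_{\star}^{\iota^\star\ov q}(X,\cS,x_{0})\cong\pi_{\star}^{\ov q}(X,\cT,x_{0})$.

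The argument is thus almost entirely bookkeeping, and the one place demanding a little care---hence the main (mild) obstacle---is making sure that the perversity appearing in the link hypothesis is the correct one: one must confirm that restricting $\iota^\star\ov q$ to the cone on a link of $\cS$ and then forming the Gajer space agrees with the object $\crG_{\ov p}(\rc L,\rc\cS)$ occurring in \thmref{thm:coarseningwithexcep}. This is immediate from the description of induced perversities on links in \remref{rem:perversityonlink}, but it is the step where the hypotheses of the two theorems must be matched up explicitly. No genuinely new ingredient is needed beyond this identification.
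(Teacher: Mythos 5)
Your proposal is correct and coincides with the paper's own argument: the paper proves Theorem~\ref{thm:raffinementThom} by repeating the proof of Theorem~\ref{thm:raffinement} verbatim, using Lemma~\ref{Dback} to see that $\ov p:=\iota^\star\ov q$ is a $K$-perversity with $\ov p\leq\ov t$, invoking Theorem~\ref{thm:coarseningwithexcep} in place of Theorem~\ref{thm:coarseningnoexcep}, and concluding via $\iota_\star\iota^\star\ov q=\ov q$. Your remarks on matching the link hypothesis are consistent with the paper's own observation that the induced perversity on the link of an exceptional stratum is the zero perversity.
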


If a stratum $S$ is exceptional, we have $\ov q=0$ in $W\menos S$, where $W$ is a small enough neighborhood of $S$.
Thus, $\iota^\star \ov q=0$ on $W\menos S$ and therefore on the link $L$. 
In short, the hypothesis on the links of exceptional strata can also be stated  as
$\pi_{1}^{\ov 0}(\rc L,\rc S,x_{0})=\{1\}$.

\medskip
Invariance of intersection homology was  established by Goresky and MacPherson  in the PL setting (\cite{GM1}) and
after  in the topological setting (\cite{GM2}). They use codimensional perversities with a growing property; we recall
their definition.

\begin{definition}\label{def:GMperversity}
A \emph{GM-perversity} is a  codimensional perversity $\ov p$ verifying $\ov p(0)=\ov p(1) =\ov p(2) = 0$ 
and, for any   $1\leq k$,
 \begin{equation}\label{eq:paso}
\ov p(k) \leq \ov p(k+1) \leq \ov p(k) +1.
\end{equation}
\end{definition}

We remind from \defref{def:perversity} that the top perversity~$\ov t$
is defined by $\ov{t}(k)=k-2$ if $k> 0$. Also, any codimensional perversity $\ov p$ verifies $\ov p(0)=0$,

\medskip
In \cite{CST9}, we have established the topological invariance of the intersection homotopy groups for 
GM-perversities and CS sets without exceptional strata. 
Below, we show  that \thmref{thm:coarseningnoexcep} extends this result to
 the perversities used by H. King in \cite{MR800845}.

\begin{proposition}\label{prop:GM}
Let $\ov p$ be a codimensional perversity verifying  $\ov p \leq \ov t$ and the growing property \eqref{eq:paso}.
Let $(X,\cS,\cT,x_{0})$ be a pointed  CS-coarsening without exceptional strata, then  
the identity map  induces  the isomorphism
$$
\iota_{\star}\colon \pi_{\star}^{\ov p}(X,\SS,x_0)  \cong \pi_{\star}^{ \ov p}(X,\TT,x_0).
$$
\end{proposition}

\begin{proof}
We also denote  $\ov p$ the perversity on $(X,\cS)$ induced by the codimensional perversity $\ov p$;
i.e., we set $\ov p(S)=\ov p(\codim\,S)$. We  claim that
$\ov p$ is a $K$-perversity for the CS-coarsening $(X,\cS,\cT)$. %
First, let $(S,Q)$ be a couple of strata of $\cS$ such that $S\preceq Q$ and $S^\iota=Q^\iota$.
For the property (K1) of \defref{def:Kper},  we have to check:
\begin{equation*}\label{eq:desigprob}
 \ov p(\codim\, Q)  \leq \ov p(\codim \,S)   \leq  \ov p(\codim \,Q)    + \ov t( \codim \,S) - \ov t( \codim \,Q).
\end{equation*} 
This is clear if $S, Q$ are regular strata. If they are both singular, this comes from
\eqref{eq:paso}.  
By hypothesis, there is no exceptional strata.

\smallskip
For the second property, we consider a couple of strata $(S,Q)$ of $\cS$ such that $\dim S=\dim Q$ and $S^\iota=T^\iota$. The axiom (K2) is trivially satisfied since we have
$$\ov p(S) = \ov p  (\codim \,S) = \ov p(\codim \,Q) = \ov p (Q).
$$

\smallskip
If $S\in \cS$ is a singular stratum, the inequalities 
$$
\ov p(S) = \ov p(\codim\, S) \leq  \ov t (\codim\, S) = \codim\, S -2 = \ov t(S).
$$
give $\ov p\leq \ov t$ on $(X,\cS)$.
Finally, let us  compute  $\iota_\star \ov p$. Let $T \in \cT$ and $S\in \cS$ be a source stratum of $T$,
then from \lemref{LemTec} i), we deduce
$$
\iota_\star \ov p(T ) =\ov p(S) = \ov p (\codim\, S)  = \ov p (\codim\, T) = \ov p(T).
$$
The hypotheses being verified, we can apply \thmref{thm:coarseningnoexcep} and get the result.
\end{proof}

 Notice that \propref{prop:GM}, Theorems  \ref{thm:Coars}   and \ref{thm:CoarsThom} also imply 
 a topological invariance of the homology of the involved Gajer spaces.
 

\end{document}